\newtheorem{theorem}{Theorem}[section]
\newtheorem{lemma}[theorem]{Lemma}
\newtheorem{cor}[theorem]{Corollary}
\newtheorem{prop}[theorem]{Proposition}
\newtheorem{definition}[theorem]{Definition}
\theoremstyle{definition}
\newtheorem{example}[theorem]{Example}
\theoremstyle{remark}
\newtheorem{remark}[theorem]{Remark}
\numberwithin{equation}{section}
\def\cM{\mathcal{M}}
\def\b1{\mathrm{1}}
\def\bC{\mathbb{C}}
\def\bM{\mathbb{M}}
\def\bN{\mathbb{N}}
\def\bR{\mathbb{R}}
\def\ffi{\mathcal{\varphi}}
\def\cM{\mathcal{M}}
\def\cN{\mathcal{N}}
\def\eps{\varepsilon}
\def\cS{\mathcal{S}}
\def\cH{\mathcal{H}}
\begin{document}
\baselineskip=15pt

\title{Anti-norms on finite von Neumann algebras}

\date{ }

\author{Jean-Christophe Bourin{\footnote{Supported by ANR 2011-BS01-008-01.}}
\ and
Fumio Hiai{\footnote{Supported by Grant-in-Aid for Scientific Research (C)26400103.}}}

\maketitle

\begin{abstract}
\noindent
As the reversed version of usual symmetric norms, we introduce the notion of symmetric
anti-norms $\|\cdot\|_!$ defined on the positive operators affiliated with a finite von
Neumann algebra with a finite normal trace. Related to symmetric anti-norms, we develop
majorization theory and superadditivity inequalities of the form
$\|\psi(A+B)\|_!\ge\|\psi(A)\|_!+\|\psi(B)\|_!$ for a wide class of functions $\psi$.
\end{abstract}

{\small\noindent
Keywords: finite von Neumann algebra, symmetric norm, symmetric anti-norm, $\tau$-measurable operator, majorization, unitary orbit, spectral dominance, Fuglede-Kadison determinant.

\medskip\noindent
AMS subjects classification 2010: 47A30, 46L52, 46L51}

\section{Introduction}

In Functional Analysis, symmetrically normed Banach functions spaces are classical objects as well as their non-commutative generalizations in the setting of $\tau$-measurable operators affiliated with a von Neumann algebra with a faithful normal finite trace $\tau$. Symmetric norms are homogeneous convex functional completely determined by their values on the positive cone of the function space or the operator algebra. This point of view may motivate the study of concave, homogeneous functionals on positive operators. It is our concern in this article. 

A part of our work could fit in a very general setting, for instance, in the $C^*$-algebra framework. However, we confine to finite von Neumann algebras for two reasons. First, dealing with a von Neumann algebra equipped with a normal finite trace allows to consider unbounded operators, and hence to develop a theory parallel to most of the usual non-commutative Banach function spaces. Secondly, the finiteness assumption allows to consider many functionals which would not make sense in the non-finite case, such as the Fuglede-Kadison determinant.
The assumption is also essential for some technical reasons (for instance, a unitary operator can be taken as a phase of the polar decomposition).
Moreover, this setup of a finite von Neumann algebra naturally extends the matrix approach in our previous work. 

We call our functionals, defined on the positive part $\cN^+$ of a finite von Neumann algebra $\cN$, {\it symmetric anti-norms} on $\cN^+$, as the triangle inequality for norms is then reversed. Section 2 gives the precise definition and exhibits an important family of such anti-norms which are derived from symmetric norms. For the convenience of the reader and to fix terminologies which may have some variants in the literature, our discussion also covers basic facts on symmetric norms. Our approach of symmetric norms may have its own interest and originality. Section 4 is devoted to the (non-obvious) extension of these anti-norms to the whole set of densely-defined positive operators affiliated with $\cN$. Here we consider the more classical case of symmetric norms as well. Section 5 presents a superadditivity inequality for convex functions which is a far extension of a classical trace inequality of Rotfel'd. Several norm inequalities follow from this anti-norm inequality. In Sections 6 and 7, we focus on a special class of symmetric norms and anti-norms which corresponds, in the commutative case, to the class of rearrangement invariant function spaces. The theory is then related to majorization relations.

Most of the results, norm and anti-norm inequalities given in Sections 4--7, are based on operator inequalities via unitary orbits. These essential operator inequalities are established in Section 3. The idea of the proofs consists in combining a unitary orbit technique for spectral dominance in a finite factor with the disintegration of $\cN$ into its factorial components. These results nicely extend the scope of some well-known matrix inequalities to the general finite von Neumann algebra setting.

\section{Symmetric norms and symmetric anti-norms}

Let $\cN$ be a finite von Neumann algebra acting on a separable Hilbert space $\cH$ with a
faithful normal finite trace $\tau$, and $\cN^+$  the set of positive operators in $\cN$.
Let $\overline\cN$ denote the set of $\tau$-measurable operators affiliated with $\cN$ (see
\cite{FK} for details), and $\overline\cN^+$ the positive cone of $\overline\cN$. Since
$\tau$ is finite, $\overline\cN$ is  the set of all densely-defined closed operators
affiliated with $\cN$.

In this article, a {\it symmetric norm} $\|\cdot\|$ on $\cN$ means a norm satisfying
$\|UXV\|=\|X\|$ for all $X\in\cN$ and all unitaries $U,V\in\cN$. The monotonicity of such
a norm in the next lemma is a well-known simple fact \cite[Lemma 3.2, Corollary 3.3]{FHNS}.
We give an alternative proof. The letter $I$ stands for the identity (of any algebra).

\begin{lemma}\label{monot}
Any symmetric norm $\|\cdot\|$ on $\cN$  is monotone, i.e., $\|A\|\le\|B\|$ if $A,B\in\cN^+$ and $A\le B$.
\end{lemma}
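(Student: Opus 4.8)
The plan is to reduce monotonicity to the fact that a symmetric norm is not increased by multiplying on either side by a contraction, and then to exhibit $A$ as such a two-sided contraction of a slight perturbation of $B$.

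\emph{First step.} I would establish that $\|CXD\|\le\|X\|$ for every $X\in\cN$ and all $C,D\in\cN$ with operator norms $\|C\|_\infty\le1$ and $\|D\|_\infty\le1$, where $\|\cdot\|_\infty$ denotes the operator norm. When $\|C\|_\infty<1$, the Kadison--Pedersen refinement of the Russo--Dye theorem expresses $C$ as a finite convex combination $C=\sum_k\lambda_kU_k$ of unitaries $U_k\in\cN$; the triangle inequality, homogeneity, and the hypothesis $\|U_kX\|=\|X\|$ (the case $V=I$ of symmetry) then give $\|CX\|\le\sum_k\lambda_k\|U_kX\|=\|X\|$. Replacing $C$ by $rC$ and letting $r\uparrow1$ extends this to all $\|C\|_\infty\le1$; the right multiplication by $D$ is treated in the same way (equivalently, by passing to adjoints and noting $\|D^*\|_\infty=\|D\|_\infty$), and combining the two one-sided estimates gives the claim.

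\emph{Second step.} Given $A,B\in\cN^+$ with $A\le B$, fix $\varepsilon>0$ and put $B_\varepsilon:=B+\varepsilon I$, which is invertible in $\cN$, so that $B_\varepsilon^{-1/2}\in\cN$. Set $C:=A^{1/2}B_\varepsilon^{-1/2}\in\cN$. From $A\le B\le B_\varepsilon$ we get $C^*C=B_\varepsilon^{-1/2}AB_\varepsilon^{-1/2}\le B_\varepsilon^{-1/2}B_\varepsilon B_\varepsilon^{-1/2}=I$, hence $\|C\|_\infty\le1$, and a direct computation gives $CB_\varepsilon C^*=A$. By the first step, $\|A\|=\|CB_\varepsilon C^*\|\le\|B_\varepsilon\|\le\|B\|+\varepsilon\|I\|$, and letting $\varepsilon\to0$ yields $\|A\|\le\|B\|$.

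The only point requiring more than formal manipulation is the first step, and within it the appeal to the fact that strict contractions in a von Neumann algebra are finite convex combinations of unitaries. If one prefers to avoid the finite-mean statement, one can use only that the closed unit ball of $\cN$ is the norm-closure of the convex hull of its unitary group, at the cost of an extra continuity argument for the symmetric norm; using the finite-mean version keeps the proof purely algebraic apart from the final passage $\varepsilon\to0$.
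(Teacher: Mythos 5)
Your proof is correct. It follows the same overall strategy as the paper---reduce monotonicity to the fact that $\|CXD\|\le\|X\|$ for contractions $C,D$, which in turn comes from writing contractions as convex combinations of unitaries---but both implementation choices differ. For the first step the paper exploits finiteness of $\cN$ in an elementary way: the polar decomposition $T=U|T|$ has a unitary phase, and $|T|=\frac{1}{2}\bigl((|T|+i\sqrt{I-|T|^2})+(|T|-i\sqrt{I-|T|^2})\bigr)$, so every contraction is the average of exactly two unitaries; you instead invoke the Kadison--Pedersen refinement of Russo--Dye, which is heavier machinery but valid in any unital $C^*$-algebra, so this part of your argument does not use finiteness at all (your $r\uparrow1$ passage costs nothing, since homogeneity gives $r\|CX\|\le\|X\|$ directly). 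For the second step the paper avoids the $\eps$-regularization: from $A\le B$ one gets a positive contraction $C$ with $A=B^{1/2}CB^{1/2}$, and then $A=VC^{1/2}BC^{1/2}V^*$ for a unitary $V$ (again via the unitary polar decomposition), whence $\|A\|=\|C^{1/2}BC^{1/2}\|\le\|B\|$ with no limit; your route through $B+\eps I$ is equally valid because $\|B+\eps I\|\le\|B\|+\eps\|I\|$ and $\|I\|<\infty$. Both arguments are complete; yours trades the paper's self-contained two-unitary decomposition for a standard external theorem, and in exchange its first step would survive outside the finite von Neumann algebra setting.
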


\begin{proof}
Let $T\in\cN$ be a contraction. Note that $|T|=(V_1+V_2)/2$ where
$V_1:=|T|+i\sqrt{I-|T|^2}$ and $V_2:=|T|-i\sqrt{I-|T|^2}$ are unitaries. As $T=U|T|$ for some
unitary $U\in\cN$ (since $\cN$ is a finite von Neumann algebra), we have $T=(U_1+U_2)/2$ with two unitaries $U_1,U_2\in\cN$. Therefore
$\|TXT\|\le\|X\|$ for all $X\in\cN$. Now, assume that $0\le A\le B$ in $\cN$. Then there
exists a contraction $C\in\cN^+$ such that $A=B^{1/2}CB^{1/2}=VC^{1/2}BC^{1/2}V^*$
for some unitary $V\in\cN$. Hence $\|A\|=\|C^{1/2}BC^{1/2}\|\le\|B\|$.
\end{proof}

Consequently, any symmetric norm on $\cN$ is continuous with respect to the operator norm
$\|\cdot\|_\infty$. In fact, since $|X|\le\|X\|_\infty I$,  we have
\begin{equation}\label{norm-ineq}
\|X\|\le\|X\|_\infty\|I\|,\qquad X\in\cN.
\end{equation}

A symmetric norm $\|\cdot\|$ on $\cN$ depends only on its values on positive operators via
the polar decomposition, and its restriction to $\cN^+$ satisfies
\begin{itemize}
\item[(1)] $\|\alpha A\|=\alpha\|A\|$ for all $A\in\cN^+$ and all scalars $\alpha\ge0$,
\item[(2)] $\|A\|=\|UAU^*\|$ for all $A\in\cN^+ $ and all unitaries $U\in\cN$,
\item[(3)] $\|A\|\le\|A+B\|\le\|A\|+\|B\|$ for all $A,\,B\in\cN^+ $.
\end{itemize}
The first inequality in (3) follows from Lemma \ref{monot}. Conversely, if $\|\cdot\|$ is
a non-negative functional on $\cN^+$ satisfying (1)--(3), then $\|X\|:=\|\,|X|\,\|$ for
$X\in\cN$ becomes a symmetric norm (more precisely, semi-norm) on $\cN$, as immediately
shown by a triangle inequality in \cite{AAP} or by Proposition \ref{prop-convex} below.

We introduce the notion of symmetric anti-norms on the positive cone $\cN^+$,
by replacing the convexity/subadditivity of symmetric norms with concavity/superadditivity.

%

\begin{definition}\label{Def-1}\rm
A {\it symmetric anti-norm} $\|\cdot\|_!$ on $\cN^+$ is a functional taking values in
$[0,\infty)$
satisfying the following properties:
\begin{itemize}
\item[$(1)_!$] $\|\alpha A\|_!=\alpha\|A\|_!$ for all $A\in \cN^+$ and all scalars
$\alpha\ge 0$,
\item[$(2)_!$] $\|A\|_!=\|UAU^*\|_!$ for all $A\in\cN^+ $ and all unitaries $U\in\cN$,
\item[$(3)_!$] $\|A+B\|_!\ge\|A\|_!+\|B\|_!$ for all $A,\,B\in\cN^+$.
\item[$(4)_!$] $\|A+\eps I\|_!\searrow\|A\|_!$ as $\eps\searrow0$ for all $A\in\cN^+$.
\end{itemize}
\end{definition}

This definition was first introduced in \cite{BH1,BH2} for the matrix algebra $\bM_n$.
In the matrix case, $(4)_!$ is equivalent to the usual continuity with respect to the
operator norm.

Typical example of symmetric anti-norms are $A\mapsto \{\tau(A^q)\}^{1/q}$, $0< q\le 1$, and $A\mapsto \{\tau( A^{-1/p})\}^{-1/p}$, $0<p<\infty$. The latter is first defined on the invertible part of $\cN^+$, and understood for non-invertible operators as
$$
\{\tau( A^{-p})\}^{-1/p}:=\lim_{\eps\searrow 0} \{\tau(( A+\eps I)^{-p})\}^{-1/p},
$$
where the finiteness assumption $\tau(I)<\infty$ is essential to have non-trivial functionals on $\cN^+$. These Schatten like functionals with negative exponents are a special case of a more general family.

\begin{definition}\label{Def-2}\rm
Fix a symmetric norm $\|\cdot\|$ on $\cN$ and $p>0$. For each $A\in\cN^+$, since
$\|(A+\eps I)^{-p}\|^{-1/p}$ decreases as $\eps\searrow0$ by Lemma \ref{monot}, we can
define
$$
\|A\|_!:=\lim_{\eps\searrow0}\|(A+\eps I)^{-p}\|^{-1/p}.
$$
Note that if $A$ is invertible, then the above $\|A\|_!$ is equal to $\|A^{-p}\|^{-1/p}$,
i.e.,
\begin{equation}\label{form-deriv}
\|A^{-p}\|^{-1/p}=\lim_{\eps\searrow 0}\|(A+\eps I)^{-p}\|^{-1/p}.
\end{equation}
We call this functional $A\in\cN^+\mapsto\|A\|_!$ a {\it derived anti-norm} and say that
it is derived from $\|\cdot\|$ and $p$.
\end{definition}

A derived anti-norm is indeed a symmetric anti-norm as claimed in the next statement.

\begin{theorem}\label{theorem-deriv}
The above functional $\|\cdot\|_!$  derived from a symmetric norm $\|\cdot\|$ on $\cN$ and
a $p>0$ satisfies 
$$
\| A+B\|_! \ge \| A\|_! + \| B\|_!
$$
for every $A,B\in\cN^+$. Hence $\|\cdot\|_!$ is a symmetric anti-norm on $\cN^+$.
\end{theorem}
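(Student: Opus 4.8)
The plan is to reduce everything to the invertible case and then exploit a convexity/concavity argument. First I would verify that it suffices to prove $\|A+B\|_! \ge \|A\|_! + \|B\|_!$ for $A,B$ invertible: replacing $A,B$ by $A+\eps I, B+\eps I$ and using Definition \ref{Def-2}, if the inequality $\|(A+\eps I)+(B+\eps I)\|_! \ge \|(A+\eps I)\|_! + \|(B+\eps I)\|_!$ holds (where all three operators are invertible), then letting $\eps\searrow 0$ gives the conclusion, since $\|(A+\eps I)+(B+\eps I)\|_! = \|(A+B)+2\eps I\|_! \searrow \|A+B\|_!$ by property $(4)_!$ applied to the already-established anti-norm structure — or more directly, by monotonicity of $\eps\mapsto\|(X+\eps I)^{-p}\|^{-1/p}$. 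So throughout we may assume $A,B$ invertible and must show
$$
\bigl\|(A+B)^{-p}\bigr\|^{-1/p} \ge \bigl\|A^{-p}\bigr\|^{-1/p} + \bigl\|B^{-p}\bigr\|^{-1/p}.
$$

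The key analytic input will be the operator convexity of $t\mapsto t^{-p}$ on $(0,\infty)$ for $0<p\le 1$, combined with a scaling trick to handle all $p>0$. For $0<p\le 1$ I would argue as follows: write $A+B = 2\bigl(\tfrac12 A + \tfrac12 B\bigr)$, so $(A+B)^{-p} = 2^{-p}\bigl(\tfrac12 A + \tfrac12 B\bigr)^{-p} \le 2^{-p}\bigl(\tfrac12 A^{-p} + \tfrac12 B^{-p}\bigr) = 2^{-p-1}(A^{-p}+B^{-p})$ by operator convexity. Applying the symmetric norm and its monotonicity (Lemma \ref{monot}) together with the triangle inequality and homogeneity yields $\|(A+B)^{-p}\| \le 2^{-p-1}\bigl(\|A^{-p}\| + \|B^{-p}\|\bigr)$. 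Raising to the power $-1/p$ (which reverses the inequality) and then using the elementary numerical inequality $(x+y)^{-1/p}\ge 2^{-1-1/p}\bigl(x^{-1/p}+y^{-1/p}\bigr)$ — valid since $s\mapsto s^{-1/p}$ is convex on $(0,\infty)$ — gives exactly $\|(A+B)^{-p}\|^{-1/p} \ge \|A^{-p}\|^{-1/p} + \|B^{-p}\|^{-1/p}$, as desired.

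For general $p>0$ the function $t\mapsto t^{-p}$ is no longer operator convex, so the above step fails directly; here I would instead use a two-step reduction. Pick an integer $m$ with $p/m \le 1$. If the superadditivity is known for the exponent $p/m$, i.e. for the derived anti-norm from $\|\cdot\|$ and $p/m$, one would like to deduce it for $p$. The mechanism is that the derived anti-norm for exponent $p$ equals a derived anti-norm for exponent $1$ applied to a modified norm — more precisely, $\|A^{-p}\|^{-1/p} = \bigl(\|A^{-p}\|\bigr)^{-1/p}$, and if we set $N(X) := \|\,|X|^{1/k}\,\|^{k}$ for suitable $k$ this is again a symmetric norm (being a limit/power of one — this needs the power inequalities for symmetric norms, essentially that $X\mapsto \|\,|X|^r\,\|^{1/r}$ is a norm for $r\ge 1$ up to constants, or an interpolation-type argument). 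I expect \textbf{this last reduction to be the main obstacle}: controlling how the symmetric norm interacts with operator powers, and verifying that the auxiliary functionals obtained are genuinely symmetric norms (or at least satisfy (1)--(3) above, which by the remark following \eqref{norm-ineq} is enough). An alternative and perhaps cleaner route, which I would pursue in parallel, is to bypass the restriction on $p$ entirely by using the integral representation $t^{-p} = c_p \int_0^\infty (t+s)^{-1} s^{p-1}\,ds$ scaled appropriately, reducing to the case $p=1$ where operator convexity of $t\mapsto (t+s)^{-1}$ holds for every $s>0$; one then applies the $p\le 1$ argument to each resolvent and integrates, using that symmetric norms commute with norm-convergent integrals by \eqref{norm-ineq}. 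Either way, the heart of the matter is operator convexity of the relevant resolvent functions plus the monotonicity and triangle inequality packaged in properties (1)--(3); the passage to non-invertible $A,B$ and the final numerical convexity step are routine.
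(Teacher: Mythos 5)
There is a genuine gap, and it occurs at the step you describe as ``routine''. For $0<p\le1$, after obtaining $\|(A+B)^{-p}\|\le 2^{-p-1}\bigl(\|A^{-p}\|+\|B^{-p}\|\bigr)$ from the midpoint form of operator convexity, you invoke the numerical inequality $(x+y)^{-1/p}\ge 2^{-1-1/p}\bigl(x^{-1/p}+y^{-1/p}\bigr)$. This inequality is false: convexity of $s\mapsto s^{-1/p}$ gives exactly the reverse, $\bigl(\tfrac{x+y}{2}\bigr)^{-1/p}\le\tfrac12\bigl(x^{-1/p}+y^{-1/p}\bigr)$, and indeed for $x=1$, $y\to0$ your claimed right-hand side blows up while the left-hand side tends to $1$. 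So the chain only yields $\|(A+B)^{-p}\|^{-1/p}\ge 2^{1+1/p}\bigl(\|A^{-p}\|+\|B^{-p}\|\bigr)^{-1/p}$, which does not imply superadditivity. The unweighted $\tfrac12$--$\tfrac12$ split is the wrong move: the correct device (used in the paper for $p=1$) is to first normalize $\|A^{-p}\|=\|B^{-p}\|=1$ and apply operator convexity to the weighted combination $sA+(1-s)B$ with $s$ chosen from the normalizing constants; this gives $\|(sA+(1-s)B)^{-p}\|\le1$, hence concavity of $A\mapsto\|A^{-p}\|^{-1/p}$ along the ``unit sphere'', which together with homogeneity is exactly superadditivity. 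With that repair your $0<p\le1$ argument would be sound (and in fact slightly more general than the paper's first step, which only treats $p=1$).

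The second gap is the reduction of general $p>0$ to small exponents, which you correctly flag as the main obstacle but do not close. Your candidate $N(X):=\|\,|X|^{1/k}\|^{k}$ is not a symmetric norm for $k>1$ (fractional powers $\|\,|X|^r\|^{1/r}$ with $r<1$ fail the triangle inequality), and the integral representation $t^{-p}=c_p\int_0^\infty(t+s)^{-1}s^{-p}\,ds$ converges only for $0<p<1$, so it cannot reach $p\ge1$; moreover, even there it is unclear how superadditivity of each resolvent term survives the integration and the final $(-1/p)$-th power. The paper closes this by writing $p=2^nq$ with $0<q<1$, using the Cauchy--Schwarz corollary that $X\mapsto\|X^*X\|^{1/2}$ is again a symmetric norm (iterated $n$ times, this handles the factor $2^n$ by reducing to the $p=1$ case for an auxiliary norm), and then invoking the lemma that $A\mapsto\|A^q\|_!^{1/q}$ is an anti-norm for $0<q<1$ to absorb the remaining fractional exponent. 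Some mechanism of this kind is indispensable and is missing from your proposal.
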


To prove this result, we begin with an operator arithmetic-geometric mean inequality.

\begin{lemma}\label{AGM}
Let $A,B\in\cN^+$. Then, there exists a unitary $V\in \cN$ such that
$$
|BA| \le \frac{A^2 +VB^2V^*}{2}.
$$
\end{lemma}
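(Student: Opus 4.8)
The plan is to establish the operator arithmetic–geometric mean inequality $|BA|\le\frac{A^2+VB^2V^*}{2}$ by reducing it to the numerical (scalar) inequality $2xy\le x^2+y^2$ applied via functional calculus, with the unitary $V$ supplied by the polar decomposition available in a finite von Neumann algebra. First I would write the polar decomposition of the product $AB$ (or $BA$) as $AB=W|AB|$ with $W\in\cN$ unitary — here the finiteness of $\cN$ is exactly what guarantees that the partial isometry can be taken to be a genuine unitary, which is the point flagged in the introduction. Since $A,B\in\cN^+$, note that $|AB|=(BA^2B)^{1/2}$ and, taking adjoints, $|BA|=(AB^2A)^{1/2}=W^*|AB|W$ for a suitable unitary, so it suffices to bound $|AB|=(BA^2B)^{1/2}$ and then conjugate; alternatively one works directly with $|BA|$.

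The heart of the argument is the following: for any $X\in\cN$ one has $|X|\le\frac{XX^*+X^*X}{2}$ is \emph{false} in general, so instead I would use the sharper route through the polar decomposition. Writing $BA=U|BA|$ with $U$ unitary, we get $|BA|=U^*BA$ and also $|BA|=|BA|^*=A B U$, whence
\[
2|BA|=U^*BA+ABU = U^*B\cdot A + A\cdot BU.
\]
Now apply the elementary operator inequality $Y^*Z+Z^*Y\le Y^*Y+Z^*Z$ (which is just $(Y-Z)^*(Y-Z)\ge0$) with the choice $Y:=BU$ and $Z:=A$: this gives $U^*B\,A+A\,BU\le U^*B^2U+A^2$. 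Setting $V:=U^*$ (a unitary in $\cN$) yields exactly $2|BA|\le A^2+VB^2V^*$, as desired. The only care needed is to check that $U^*BA$ is indeed selfadjoint and equals $|BA|$; this follows because $|BA|\ge0$ and $U$ is the unitary phase in $BA=U|BA|$, so $U^*BA=U^*U|BA|=|BA|$ and similarly $ABU=|BA|U^*U=|BA|$ after taking adjoints of $BA=U|BA|$, i.e. $AB=|BA|U^*$.

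The main obstacle — and the reason the lemma is nontrivial rather than a one-line scalar computation — is precisely the availability of a \emph{unitary} (rather than merely a partial isometry) phase in the polar decomposition of $BA$, together with the fact that $BA$ need not be normal so one cannot simply diagonalize. The finite von Neumann algebra hypothesis resolves this: every element of $\cN$ has a polar decomposition with unitary phase. A secondary point is that the inequality $Y^*Z+Z^*Y\le Y^*Y+Z^*Z$ and the whole manipulation take place entirely within the bounded algebra $\cN$ (since $A,B\in\cN^+$), so no $\tau$-measurability subtleties intervene here; the unbounded extensions are handled later via the limiting definitions. Once Lemma~\ref{AGM} is in hand, Theorem~\ref{theorem-deriv} will follow by applying it with $A,B$ replaced by appropriate powers $(A+\eps I)^{-p/2}$ etc., converting the arithmetic–geometric mean bound into the superadditivity of $\|\cdot\|_!$ through the monotonicity of the symmetric norm (Lemma~\ref{monot}) and a passage to the limit $\eps\searrow0$.
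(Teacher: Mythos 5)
Your proof is correct and is essentially the paper's argument: the paper obtains the same inequality by conjugating the positive block operator with entries $A^2, AB, BA, B^2$ by $\bigl[\,I \ \ {-V}\,\bigr]$, which is exactly your expansion of $(A-BV^*)^*(A-BV^*)\ge 0$, and it likewise takes the unitary from the polar decomposition $BA=V^*|BA|$ (available since $\cN$ is finite). No gaps.
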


\begin{proof} Consider the operator on ${\mathcal{H}}\oplus {\mathcal{H}}$, 
$$
\begin{bmatrix}
A^2 &AB \\
BA &B^2
\end{bmatrix}
$$
which is a positive operator. Thus, for any $V\in\cN$ so is 
$$
\begin{bmatrix}
I & -V
\end{bmatrix}
\begin{bmatrix}
A^2 &AB \\
BA &B^2
\end{bmatrix}
\begin{bmatrix}
I \\ -V^*
\end{bmatrix}.
$$
Letting $V^*$ be the unitary factor in the polar decomposition $BA=V^*|BA|$ yields the inequality.
\end{proof}

\vskip 5pt
Combining Lemmas \ref{monot} and \ref{AGM} yields $\| AB \| \le (s\| A^2\| + s^{-1}\| B^2\|)/2$ for all symmetric norms, $A,B\in\cN^+$, and $s>0$. Thus, minimizing over $s$ and considering  operators $X,Y\in\cN$ with $|X^*|=A$, $|Y^*|=B$, we obtain the Cauchy-Schwarz inequality for symmetric norms.

\begin{cor}\label{Schwarz} Let $X,Y\in\cN$. Then, for any symmetric norm on $\cN$, 
$$
\| X^*Y \| \le \| X^*X \|^{1/2} \| Y^*Y\|^{1/2}.
$$
\end{cor}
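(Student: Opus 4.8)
The goal is to prove the Cauchy–Schwarz inequality $\|X^*Y\|\le\|X^*X\|^{1/2}\|Y^*Y\|^{1/2}$ for every symmetric norm on $\cN$, following the route already laid out in the paragraph just before the statement. The plan is first to combine Lemma \ref{monot} (monotonicity of symmetric norms) with Lemma \ref{AGM} (the operator AM–GM inequality): given $A,B\in\cN^+$ and $s>0$, apply Lemma \ref{AGM} to the pair $(s^{1/2}A,\,s^{-1/2}B)$ to obtain a unitary $V\in\cN$ with $|BA|=|s^{-1/2}B\cdot s^{1/2}A|\le(sA^2+s^{-1}VB^2V^*)/2$. Then monotonicity of $\|\cdot\|$, together with the triangle inequality and unitary invariance (properties (2) and (3) of symmetric norms on $\cN^+$), gives $\|BA\|=\||BA|\|\le\tfrac12(s\|A^2\|+s^{-1}\|B^2\|)$.

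Next I would optimize over the free parameter $s>0$. The right-hand side $\tfrac12(s\|A^2\|+s^{-1}\|B^2\|)$ is minimized (by AM–GM for positive reals) at $s=\|B^2\|^{1/2}\|A^2\|^{-1/2}$ when $\|A^2\|>0$, yielding $\|BA\|\le\|A^2\|^{1/2}\|B^2\|^{1/2}$; the degenerate case $\|A^2\|=0$ forces $A=0$ (faithfulness or just $\|\cdot\|$ being a norm), hence $BA=0$ and the bound is trivial, and similarly if $\|B^2\|=0$. So for all $A,B\in\cN^+$ we have $\|BA\|\le\|A^2\|^{1/2}\|B^2\|^{1/2}$, and by symmetry (or by noting $\|BA\|=\|(BA)^*\|=\|AB\|$) also $\|AB\|\le\|A^2\|^{1/2}\|B^2\|^{1/2}$.

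Finally I would pass from the positive case to general $X,Y\in\cN$. Write the polar decompositions $X^*=A U$ and $Y^*=B W$ with $A=|X^*|$, $B=|Y^*|$ positive and $U,W\in\cN$ unitary (here the finiteness of $\cN$ is what guarantees the phases can be taken unitary). Then $X^*Y=AU(BW)^*=AUW^*B^*=AUW^*B$, so by unitary invariance of the symmetric norm $\|X^*Y\|=\|AUW^*B\|=\|A\cdot(UW^*B)\|$; absorbing the unitary $UW^*$ and using unitary invariance again reduces this to $\|AB'\|$ where $B'$ is unitarily conjugate... more directly: $\|X^*Y\|=\|AUW^*B\|\le\|A\|\cdot\|\cdot\|$-type estimate is not quite it, so instead note $\|AUW^*B\|=\|\,|AUW^*B|\,\|$ and use the inequality just proved after conjugating. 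Cleanly: since $\|\cdot\|$ is unitarily bi-invariant on $\cN$, $\|AUW^*B\|=\|A B''\|$ is not literally true, so the correct move is $\|X^*Y\| = \|AUW^*B\|$ and then bound using $\|A(UW^*B)\|\le$ ... — here I would instead simply observe $\|X^*Y\|=\|\,|X^*Y|\,\|$ is awkward, and it is cleaner to apply the positive-case inequality in the disguised form: for the operators $A$ and the positive operator $\widetilde B:=UW^*B(UW^*)^*$ one has... In any case, the substantive content is entirely in the positive case, and the reduction to it is a routine manipulation with polar decompositions and unitary invariance, using $\|X^*X\|=\|A^2\|$ and $\|Y^*Y\|=\|B^2\|$. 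The only point requiring any care — and the one I would flag as the main (minor) obstacle — is this last bookkeeping step: making sure the unitaries from the two polar decompositions are correctly absorbed by the bi-unitary invariance of $\|\cdot\|$ on $\cN$, and handling the trivial cases $X=0$ or $Y=0$ separately.
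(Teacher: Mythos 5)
Your argument is the paper's own: rescale the AM--GM operator inequality of Lemma \ref{AGM} by $s>0$, apply Lemma \ref{monot} together with unitary invariance and the triangle inequality to get $\|BA\|\le\tfrac12(s\|A^2\|+s^{-1}\|B^2\|)$, optimize over $s$, and then reduce the general case to positive operators via polar decompositions with $A=|X^*|$, $B=|Y^*|$. The positive case and the optimization are handled correctly, including the degenerate cases. The only place you stall is the final reduction, and the fix is one line: from $X=U|X|$ with $U\in\cN$ unitary (finiteness of $\cN$) one has $|X^*|=U|X|U^*$ and hence $X=|X^*|U$, so $X^*=U^*|X^*|$; likewise $Y=|Y^*|W$. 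Therefore $X^*Y=U^*\,|X^*|\,|Y^*|\,W$, and bi-unitary invariance gives $\|X^*Y\|=\|\,|X^*|\,|Y^*|\,\|=\|AB\|$, while $\|A^2\|=\|XX^*\|=\|U(X^*X)U^*\|=\|X^*X\|$ and similarly $\|B^2\|=\|Y^*Y\|$. (Your slip was writing $X^*=AU$ instead of $X^*=U^*A$, which is why the unitaries would not absorb; with the order corrected they sit on the outside and disappear immediately.) With that, the positive-case inequality $\|AB\|\le\|A^2\|^{1/2}\|B^2\|^{1/2}$ yields the claim, exactly as the paper intends.
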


As a byproduct of this  inequality we get from $\|\cdot\|$ another symmetric norm.

\begin{cor}\label{Qnorm} If $\|\cdot\|$ is a symmetric norm on $\cN$, then
$X\mapsto \| X^*X\|^{1/2}$ is a symmetric norm on $\cN$ too.
\end{cor}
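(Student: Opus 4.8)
The plan is to check directly that the functional $N(X):=\|X^*X\|^{1/2}$ satisfies the three requirements for a symmetric norm on $\cN$---positive homogeneity together with definiteness, invariance under $X\mapsto UXV$, and the triangle inequality---the last being the only point of substance.

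The elementary axioms I would dispatch first. Positive homogeneity is immediate from $\|(\alpha X)^*(\alpha X)\|=|\alpha|^2\|X^*X\|$, and definiteness holds because $\|\cdot\|$ is a genuine norm, so $N(X)=0$ forces $X^*X=0$ and hence $X=0$. For the unitary invariance, observe that for unitaries $U,V\in\cN$ one has $(UXV)^*(UXV)=V^*(X^*X)V$; since $X^*X\in\cN^+$, property (2) of symmetric norms gives $\|V^*(X^*X)V\|=\|X^*X\|$, whence $N(UXV)=N(X)$.

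For the triangle inequality I would expand
$$
(X+Y)^*(X+Y)=X^*X+X^*Y+Y^*X+Y^*Y
$$
and apply the triangle inequality of $\|\cdot\|$ to obtain
$$
\|(X+Y)^*(X+Y)\|\le\|X^*X\|+\|X^*Y\|+\|Y^*X\|+\|Y^*Y\|.
$$
Corollary \ref{Schwarz} controls the two cross terms: directly $\|X^*Y\|\le\|X^*X\|^{1/2}\|Y^*Y\|^{1/2}$, and, after interchanging the roles of $X$ and $Y$, $\|Y^*X\|\le\|X^*X\|^{1/2}\|Y^*Y\|^{1/2}$ as well. Substituting gives
$$
\|(X+Y)^*(X+Y)\|\le\bigl(\|X^*X\|^{1/2}+\|Y^*Y\|^{1/2}\bigr)^2,
$$
and taking square roots yields $N(X+Y)\le N(X)+N(Y)$.

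I do not expect a genuine obstacle here: once Corollary \ref{Schwarz} is available, the argument is a short computation. The one subtlety worth flagging is that $X^*Y$ and $Y^*X$ are not positive operators, so the bound on the cross terms must be drawn from the Cauchy-Schwarz inequality for symmetric norms (which is stated for arbitrary elements of $\cN$) rather than from the monotonicity of $\|\cdot\|$ on $\cN^+$.
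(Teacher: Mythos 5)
Your proof is correct and is exactly the argument the paper intends: the corollary is stated as a ``byproduct'' of Corollary \ref{Schwarz} with no written proof, and the standard route is precisely your expansion of $(X+Y)^*(X+Y)$ with the two cross terms controlled by the Cauchy--Schwarz inequality. The elementary axioms and the unitary invariance are handled correctly as well, so nothing further is needed.
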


For symmetric anti-norms, the following proposition is known in the matrix case \cite{BH1}.

\begin{lemma}\label{lem-aux1} If $\|\cdot\|_!$ is a symmetric anti-norm on $\cN^+$ and
$0<q<1$,  then $A\mapsto \| A^q\|_!^{1/q}$ is a symmetric anti-norm on $\cN^+$ too.
\end{lemma}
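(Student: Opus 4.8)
We need to show that $\Phi(A):=\|A^q\|_!^{1/q}$ is a symmetric anti-norm for $0<q<1$. Properties $(1)_!$ and $(2)_!$ are immediate: homogeneity follows since $\|(\alpha A)^q\|_!^{1/q}=\|\alpha^q A^q\|_!^{1/q}=\alpha\|A^q\|_!^{1/q}$ using $(1)_!$ for $\|\cdot\|_!$, and unitary invariance follows from $(UAU^*)^q=UA^qU^*$ together with $(2)_!$ for $\|\cdot\|_!$. Property $(4)_!$ should follow from the continuity $(4)_!$ of $\|\cdot\|_!$ together with the fact that $(A+\eps I)^q\searrow A^q$ in operator norm as $\eps\searrow0$ (operator monotonicity and operator concavity of $t\mapsto t^q$); one applies $(4)_!$ for $\|\cdot\|_!$ to the decreasing net $(A+\eps I)^q$, after noting that $(A+\eps I)^q \le A^q + \eps^q I$ so that the net is squeezed.

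The essential point is superadditivity $(3)_!$: for $A,B\in\cN^+$ we must prove $\|(A+B)^q\|_!^{1/q}\ge\|A^q\|_!^{1/q}+\|B^q\|_!^{1/q}$. The plan is to exploit the operator concavity of $t\mapsto t^q$ on $[0,\infty)$. Concavity and positive homogeneity of degree $q$ give, for the "angle-bisector" type decomposition,
$$
(A+B)^q = 2^q\left(\frac{A+B}{2}\right)^q \ge 2^q\cdot\frac{A^q+B^q}{2} = 2^{q-1}(A^q+B^q),
$$
but this is too weak since $2^{q-1}<1$. Instead I would use the sharp superadditivity of operator concave functions up to a unitary conjugation: there is a classical fact (Bourin–Uchiyama / Ando–Zhan type) that for operator concave $f\ge0$ with $f(0)=0$ and $A,B\in\cN^+$, there exist unitaries $U,V\in\cN$ with $f(A+B)\ge Uf(A)U^* + Vf(B)V^*$. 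Applying this with $f(t)=t^q$, then using monotonicity-type behaviour of $\|\cdot\|_!$ on sums together with $(2)_!$ and $(3)_!$, we get
$$
\|(A+B)^q\|_! \ge \|\,Uf(A)U^* + Vf(B)V^*\,\|_! \ge \|Uf(A)U^*\|_! + \|Vf(B)V^*\|_! = \|A^q\|_! + \|B^q\|_!,
$$
where the first inequality uses that $\|\cdot\|_!$ is monotone (which follows from $(3)_!$: if $X\le Y$ in $\cN^+$ then $\|Y\|_!=\|X+(Y-X)\|_!\ge\|X\|_!$). Finally, from $\|(A+B)^q\|_!\ge\|A^q\|_!+\|B^q\|_!$ one passes to the $1/q$-th powers using that $q<1$ makes $t\mapsto t^{1/q}$ superadditive on $[0,\infty)$, namely $(x+y)^{1/q}\ge x^{1/q}+y^{1/q}$ for $x,y\ge0$, to conclude $\Phi(A+B)\ge\Phi(A)+\Phi(B)$.

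The main obstacle is securing the unitary-orbit superadditivity $f(A+B)\ge Uf(A)U^*+Vf(B)V^*$ in the full finite von Neumann algebra setting (not just for matrices); this is precisely the kind of operator inequality the introduction says is established in Section 3 via spectral dominance and disintegration into factors, so I would invoke that machinery. A cleaner alternative that avoids the unitary conjugation entirely is to reduce directly to Theorem~\ref{theorem-deriv}–style arguments, or to note that in the matrix case this lemma is already known (\cite{BH1}) and the extension is by the same disintegration argument used throughout; but the honest proof rests on the Section 3 operator inequality, and the remaining steps ($1/q$-superadditivity of the scalar power, continuity, homogeneity) are routine.
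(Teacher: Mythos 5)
Your treatment of $(1)_!$, $(2)_!$ and $(4)_!$ is fine and matches the paper (the squeeze $A^q\le(A+\eps I)^q\le A^q+\eps^qI$ is exactly the paper's argument for $(4)_!$). But the core of your superadditivity argument is wrong. The unitary-orbit inequality $f(A+B)\ge Uf(A)U^*+Vf(B)V^*$ that you invoke holds for non-negative \emph{convex} $g$ with $g(0)=0$ (that is Theorem \ref{th-super}); for non-negative \emph{concave} $f$ such as $f(t)=t^q$, $0<q<1$, the inequality goes the other way (Theorem \ref{th-sub}). Already at the scalar level $(a+b)^q\le a^q+b^q$, with strict inequality for $a,b>0$, so the inequality you want cannot hold: taking $A=B=aI$ it would read $2^q a^q\ge 2a^q$. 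Consequently your intermediate claim $\|(A+B)^q\|_!\ge\|A^q\|_!+\|B^q\|_!$ is also false; e.g.\ for the anti-norm $\|\cdot\|_!=\tau(\cdot)$ and $A=B=aP$ with $P$ a projection it reads $2^qa^q\tau(P)\ge 2a^q\tau(P)$. Note that this intermediate inequality is strictly stronger than the lemma's conclusion (which only asserts superadditivity of $\|A^q\|_!^{1/q}$, not of $\|A^q\|_!$), so even the logical shape of your reduction is off.

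The correct route, which is the matrix-case proof from \cite{BH1} that the paper cites, uses the operator concavity of $t\mapsto t^q$ in the form $(\lambda A+(1-\lambda)B)^q\ge\lambda A^q+(1-\lambda)B^q$ --- a genuine operator inequality needing no unitaries. Combined with the monotonicity of $\|\cdot\|_!$ on $\cN^+$ (which, as you correctly note, follows from $(3)_!$) and with $(3)_!$ itself, this gives that $\Psi(A):=\|A^q\|_!$ is concave on $\cN^+$; it is also positively homogeneous of degree $q$. One then passes to $\Phi=\Psi^{1/q}$ by the standard level-set argument: $\Phi$ is homogeneous of degree $1$, its super-level sets $\{\Phi\ge c\}=\{\Psi\ge c^q\}$ are convex because $\Psi$ is concave, and a non-negative, degree-one homogeneous, quasi-concave functional is concave, hence superadditive. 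Your final scalar step (superadditivity of $t\mapsto t^{1/q}$) is not what rescues the argument; the homogeneity-plus-quasi-concavity step is.
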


\begin{proof} The same proof as in the matrix case \cite{BH1} shows that it is a
homogeneous, unitarily invariant and concave functional. The continuity property $(4)_!$,
i.e., $\| A^q\|_!^{1/q}=\lim_{\eps\searrow 0} \| (A+\eps)^q\|_!^{1/q}$ is obvious from the
monotonicity of $\|\cdot\|_!$ since $A^q\le(A+\eps I)^q\le A^q+\eps^qI$.
\end{proof}

We are now in a position to prove the theorem.

\begin{proof} (Theorem \ref{theorem-deriv})\quad Let $\|\cdot\|$ be a symmetric norm on
$\cN$. Let $A,B\in\cN^+$ be invertible and assume that $\| A^{-1}\|=\|B^{-1}\|=1$. As
$t\mapsto t^{-1}$ is  operator convex on $(0,\infty)$, we have, for $0<s<1$,
$$
\| (sA+(1-s)B)^{-1} \| \le \|sA^{-1} +(1-s)B^{-1}\| \le s +(1-s) =1
$$
and so 
$$
\| (sA+(1-s)B)^{-1} \|^{-1} \ge 1.
$$
For general invertible $S,T\in\cN^+$, taking in this estimate $A=\| S^{-1}\| S$, $B=\| T^{-1}\| T$, and $s= \| S^{-1}\|^{-1}/ ( \| S^{-1}\|^{-1}+  \| T^{-1}\|^{-1})$ yields
$$
\| (S+T)^{-1} \|^{-1} \ge  \| S^{-1}\|^{-1} +  \| T^{-1}\|^{-1}.
$$
Therefore, $A\mapsto \|A\|_!:=\|A^{-1}\|^{-1}$ is a homogeneous and concave/superadditive  functional on the invertible part of $\cN^+$. It can be extended with the same properties to the whole of $\cN^+$ by the limit formula $\|A\|_!:=\lim_{\eps\searrow 0}\|A+\eps I\|_!$. Hence, this functional derived from a symmetric norm $\|\cdot\|$ and $p=1$ is a symmetric anti-norm on $\cN^+$.

Next we consider a functional derived from a symmetric norm $\|\cdot\|$ and an arbitrary $p>0$. We have $p=2^nq$ where $n$ is a positive integer and $0<q<1$. By Corollary \ref{Qnorm} applied $n$ times and the first step of the proof,
$$
\|A\|_!^{(n)}:=\lim_{\eps\searrow0}\|(A+\eps I)^{-2^n}\|^{-1/2^n}
$$
is a symmetric anti-norm on $\cN^+$. Applying Lemma \ref{lem-aux1} shows that
$A\mapsto\bigl(\|A^q\|_!^{(n)}\bigr)^{1/q}$ is a symmetric anti-norm too, which is readily
verified to be the functional derived from $\|\cdot\|$ and $p$.
\end{proof}

\section{Inequalities via unitary orbits}

This section is a main technical part of this article. The next theorems give superadditive
or subadditive operator inequalities via unitary orbits for convex or concave functions,
which will be of essential use in Sections 4 and 5.

\begin{theorem}\label{th-super}
Let $g(t)$ be a non-negative convex function on $[0,\infty)$ with $g(0)=0$. Then, for every
$A,B\in\overline{\cN}^+$ and every $\eps>0$, there exist unitaries $U,V\in\cN$ such that
$$
g(A+B)+\eps I\ge Ug(A)U^*+Vg(B)V^*.
$$
\end{theorem}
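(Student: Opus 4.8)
The plan is to first establish the inequality in the case where $\cN$ is a finite factor, and then pass to the general finite von Neumann algebra by disintegration into factorial components (the strategy announced in the introduction). Within the factor case, I would further reduce to bounded operators: for $A,B\in\overline\cN^+$ and $\delta>0$ set $A_\delta:=A(I+\delta A)^{-1}$, $B_\delta:=B(I+\delta B)^{-1}$, which lie in $\cN^+$, are bounded, and converge (in a suitable sense, e.g. in measure or in the strong resolvent sense) to $A,B$ as $\delta\searrow0$; since $g$ is continuous and the unitary-orbit comparison should be stable under such limits, it suffices to prove the statement when $A,B\in\cN^+$ are bounded. (One must check that $g(A_\delta)\to g(A)$, etc., in the topology needed, and that the $\eps I$ slack absorbs the error — this is routine but should be spelled out.)

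The heart of the matter is the bounded case in a finite factor. Here the key tool is \emph{spectral dominance}: in a finite factor, for $X,Y\in\cN^+$ one has $Y$ in the (closure of the) unitary orbit majorization of $X$ precisely when the distribution functions / generalized $s$-numbers satisfy the appropriate inequality, and — crucially — $X\le UYU^*$ for some unitary $U$ whenever the eigenvalue functions satisfy $\mu_t(X)\le\mu_t(Y)$ for all $t$. So I would aim to show the scalar (commutative) inequality at the level of eigenvalue functions. Concretely, the classical fact behind Rotfel'd-type results is: for a non-negative convex $g$ with $g(0)=0$, the function $g$ is \emph{superadditive} on $[0,\infty)$, i.e. $g(s+t)\ge g(s)+g(t)$. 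From this one wants to deduce a majorization-type statement comparing the spectral scale of $g(A+B)$ with that of $g(A)$ ``plus'' that of $g(B)$ arranged in a common unitary orbit. The mechanism (as in the matrix case of Bourin--Uchiyama and the authors' earlier work) is: decompose the spectral projection of $A+B$ corresponding to its ``large'' part and use the pinching/compression argument to find, on a complementary pair of projections $P$ and $P^\perp$ with $\tau(P)$ controlled, that $g(A+B)\ge g(PAP)\oplus$-type estimates. The finite-factor unitary-orbit lemma then upgrades a pinching $C_1\oplus C_2 \preceq C$ to an actual operator inequality $U C_1 U^* + V C_2 V^* \le C + \eps I$ after perturbation.

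I would organize it as: (i) prove $g$ superadditive on $[0,\infty)$ from convexity and $g(0)=0$; (ii) in a finite factor with $A,B$ bounded, use a Rotfel'd-type pinching: choosing the spectral projection $E:=\chi_{[\lambda,\infty)}(A+B)$ for a threshold $\lambda$, show $g(A+B)\ge E g(A+B) E \oplus E^\perp g(A+B)E^\perp \ge $ (using $g((A+B)|_{E^\perp}) \ge g(A|_{E^\perp}) + g(B|_{E^\perp})$ via operator-convexity-free scalar superadditivity on the compressions) something dominating $g(A)$ on one block and $g(B)$ on the other, up to $\eps$; (iii) invoke the finite-factor spectral-dominance lemma to realize this as $Ug(A)U^* + Vg(B)V^* \le g(A+B) + \eps I$; (iv) disintegrate $\cN = \int^\oplus \cN_x\,d\nu(x)$ and apply the factor case fiberwise, then measurably select the unitary fields $U=\int^\oplus U_x$, $V=\int^\oplus V_x$ — the measurable-selection step needs the $\eps$ uniformly, which is why the statement carries $+\eps I$ rather than an exact inequality.

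The main obstacle I anticipate is step (ii)–(iii): getting the \emph{right} pinching that simultaneously dominates $g(A)$ and $g(B)$ in orthogonal corners. Pure scalar superadditivity $g(s+t)\ge g(s)+g(t)$ does not by itself give an operator inequality because $A$ and $B$ need not commute; the trick (Bourin's ``$g(A+B)\ge Ug(A)U^*+Vg(B)V^*$'' phenomenon for matrices) relies on choosing the splitting projection adapted to $A+B$ and then separately comparing eigenvalue functions of the compressions of $g(A+B)$ with those of $g(A)$ and $g(B)$ via the monotonicity $\mu_t(g(C_{\text{compressed}})) $ versus $\mu_t(g(C))$. Making this work verbatim in the $\tau$-measurable/finite-von-Neumann-algebra setting — with generalized $s$-numbers $\mu_t(\cdot)$ in place of eigenvalues, and with the measurable field of unitaries from disintegration — is the genuinely technical content, and is presumably where the authors invoke their unitary-orbit spectral-dominance machinery established for finite factors.
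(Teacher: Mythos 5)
Your global architecture matches the paper's: prove the inequality in a finite factor by converting a spectral-dominance relation into a unitary-orbit operator inequality up to $\eps I$, then disintegrate $\cN$ into its factorial components and patch the fiberwise unitaries together by measurable selection (which is exactly why the statement carries $+\eps I$ rather than an exact inequality). Your steps (iii) and (iv) are correct and coincide with the paper's use of Lemma \ref{lemma-sp-dominance} and of the central direct integral decomposition with a measurable selection theorem. Your preliminary truncation to bounded operators is harmless but unnecessary, since the key inequality the paper invokes is already stated for $\tau$-measurable operators.

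The genuine gap is your step (ii), i.e.\ the actual mechanism that produces \emph{two} unitaries in the factor case; what you sketch there would not work. Pinching by a spectral projection $E$ of $A+B$ gives $g(A+B)=Eg(A+B)E+E^\perp g(A+B)E^\perp$ trivially (since $E$ commutes with $A+B$), but there is no reason for $g(A)$ to be spectrally dominated by the $E$-corner and $g(B)$ by the $E^\perp$-corner: the compressions $EAE$ and $EBE$ need not commute, scalar superadditivity of $g$ says nothing about them, and one ultimately needs all of $g(A)$ (in a unitary orbit), not a compression of it. The ingredient the paper actually uses is the Brown--Kosaki inequality: for any contraction $Z\in\cN$ and $T\in\overline{\cN}^+$, $g(Z^*TZ)$ is spectrally dominated by $Z^*g(T)Z$, which together with Lemma \ref{lemma-sp-dominance} yields $Z^*g(T)Z+\eps I\ge Wg(Z^*TZ)W^*$ for some unitary $W$. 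This is then applied with the contractions coming from the $2\times2$ block realization of $A+B$ in the Aujla--Bourin/Bourin--Lee argument: the operator $T=\bigl[\begin{smallmatrix}A^{1/2}&B^{1/2}\\ 0&0\end{smallmatrix}\bigr]$ satisfies $TT^*=(A+B)\oplus 0$ while $T^*T$ has diagonal blocks $A$ and $B$, and $g(0)=0$ makes $g(TT^*)$ and $g(T^*T)$ unitarily equivalent; compressing $g(T^*T)$ to the two corners is what simultaneously places $g(A)$ and $g(B)$ under $g(A+B)$ via two unitaries. Without this (or an equivalent) identity the factor case does not close; you correctly flagged it as the main obstacle, but it is precisely the content of the theorem rather than a routine technicality.
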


\begin{theorem}\label{th-sub}
Let $f(t)$ be a non-negative concave function on $[0,\infty)$. Then,
for every $A,B\in\overline{\cN}^+$ and every $\eps>0$, there exist unitaries $U,V\in\cN$
such that
$$
f(A+B)\le Uf(A)U^*+Vf(B)V^*+\eps I.
$$
\end{theorem}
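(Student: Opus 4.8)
The plan is to reduce Theorem~\ref{th-sub} to Theorem~\ref{th-super} by a standard trick for concave functions, and only to prove Theorem~\ref{th-super} directly. Indeed, if $f$ is non-negative and concave on $[0,\infty)$, the key observation is that $f(0)\ge 0$ and the function $g(t):=f(0)-f(t)+\big(f(1)-f(0)\big)t$ is, up to sign, convex; more precisely one writes $f(t)=f(0)+ct-g(t)$ where $c$ is chosen so that $g$ is a non-negative convex function on $[0,\infty)$ with $g(0)=0$ (take $c$ large enough, or better, note $f$ is operator-concave-free—it is just a scalar concave function so $-f$ is convex and $-f(t)+f(0)+\text{(slope at }0^+)\,t\ge 0$ when $f$ is increasing; one handles the general concave case by the decomposition $f(t)=\alpha + \beta t - g(t)$ with $g$ convex, $g(0)=0$, $g\ge 0$, and $\alpha,\beta\ge 0$ since $f\ge 0$ and concave forces $f(t)\le f(0)+f'(0^+)t$). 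Then $f(A+B)=\alpha I+\beta(A+B)-g(A+B)$ and similarly for $A$ and $B$ separately. Plugging in the inequality $g(A+B)+\eps I\ge Ug(A)U^*+Vg(B)V^*$ from Theorem~\ref{th-super}, and using $\alpha I + \beta A = f(A) + g(A)$ together with unitary invariance $Uf(A)U^* + Ug(A)U^* = U(\alpha I+\beta A)U^* = \alpha I + \beta UAU^*$—wait, that reintroduces a conjugated $A$. The clean route is: from $g(A+B)+\eps I \ge Ug(A)U^* + Vg(B)V^*$ we get $-g(A+B) \le -Ug(A)U^* - Vg(B)V^* + \eps I$, hence
$$
f(A+B) = \alpha I + \beta(A+B) - g(A+B) \le \alpha I + \beta(A+B) - Ug(A)U^* - Vg(B)V^* + \eps I.
$$
Now one must convert $\alpha I+\beta(A+B)$ into $U f(A)U^* + V f(B)V^* + Ug(A)U^* + Vg(B)V^*$ modulo a small error; since $U f(A) U^* + U g(A) U^* = U(\alpha I + \beta A)U^* = \alpha I + \beta U A U^*$, the residual term is $\alpha I + \beta(A+B) - \alpha I - \beta UAU^* - \alpha I - \beta VBV^*$, which is not obviously controllable. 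So the honest statement is that the reduction is not a one-line trick, and Theorem~\ref{th-sub} should instead be proved by the \emph{same method} as Theorem~\ref{th-super}, not deduced from it.

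Therefore my actual plan follows the structure announced in the introduction: (i) first establish the inequality in a \emph{finite factor} using a unitary-orbit / spectral-dominance argument, then (ii) pass to a general finite von Neumann algebra by direct-integral disintegration into factors. For step (i), in a type $\mathrm{II}_1$ or type $\mathrm{I}_n$ factor, the ordering by unitary orbits is controlled by comparison of spectral scales (eigenvalue functions). The inequality $f(A+B)\le Uf(A)U^* + Vf(B)V^* + \eps I$ for unitaries $U,V$ is equivalent, via the characterization of the set $\{X : X \le UCU^* + VDV^* \text{ for some unitaries}\}$, to a statement about the spectral scale $\mu_t(f(A+B))$ being dominated by a suitable combination of $\mu_t(f(A))$ and $\mu_t(f(B))$. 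Concretely, one wants: for every $t$, the truncated integral $\int_0^t \mu_s(f(A+B))\,ds \le \int_0^{t} \mu_s(f(A))\,ds + \int_0^{t}\mu_s(f(B))\,ds$ (a Ky Fan type majorization), which for concave $f$ with $f(0)\ge 0$ would follow from the sub-majorization $A+B \prec_w$-type relations for eigenvalues, combined with the fact that $t\mapsto \mu_t(f(X))= f(\mu_t(X))$ when $f$ is monotone (or one reduces to the monotone case). The $\eps I$ and the appearance of \emph{two different} unitaries $U,V$ is exactly what makes the unitary-orbit reformulation flexible enough to work in the non-atomic factor setting, where one cannot literally diagonalize but can approximate.

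The main obstacle will be step (i): proving the factor-level inequality cleanly. The cleanest path is to first prove the convex version, Theorem~\ref{th-super}, in a factor—here one uses that for a non-negative convex $g$ with $g(0)=0$, one has the pointwise scalar inequality exploited through a subadditivity-type estimate of $g$ on sums, together with the Rotfel'd-type mechanism: $g(A+B)$ is spectrally dominated from below (after conjugation) by $g(A)\oplus g(B)$ restricted appropriately, which at the level of spectral scales reads $\mu_t(g(A+B)) \ge$ the decreasing rearrangement of the concatenation of $\mu(g(A))$ and $\mu(g(B))$ in a suitable Ky Fan sense. Establishing this requires the operator inequality machinery: I expect to use the fact that for $A,B\in\overline\cN^+$ one can find a unitary realizing $g(A)\oplus g(B)$ inside a compression of $g(A+B)$ up to $\eps$, via a $2\times 2$ block / pinching argument in the factor, exactly paralleling the matrix proof in the authors' earlier work \cite{BH1,BH2}. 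Once Theorem~\ref{th-super} is in hand at the factor level, Theorem~\ref{th-sub} at the factor level follows by the decomposition $f = \alpha + \beta t - g$ with $g$ convex, $g(0)=0$, $g\ge 0$, being careful to absorb the linear part correctly—the linear term $\beta(A+B)$ does split exactly as $\beta A + \beta B$ with no conjugation needed before taking the difference, because one writes $f(A+B) = \alpha I + \beta(A+B) - g(A+B)$ and $Uf(A)U^* + Vf(B)V^* = \alpha\cdot 2I + \beta(UAU^* + VBV^*) - Ug(A)U^* - Vg(B)V^*$, and chooses at the very end to work instead with the normalization making $f(0)=0$ (subtract the constant), reducing to the case $\alpha = 0$, $f(t) = \beta t - g(t)$ with $g$ convex $\ge 0$, $g(0)=0$, $\beta \ge 0$; then $f(A+B) - Uf(A)U^* - Vf(B)V^* = \beta(A+B - UAU^* - VBV^*) - (g(A+B) - Ug(A)U^* - Vg(B)V^*)$, and here the first bracket is $\le \eps I$ by applying Theorem~\ref{th-super} to the convex function $t\mapsto t$ itself—so the two error terms combine into a single $\eps I$ after relabeling $\eps$. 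That last point—simultaneously choosing $U,V$ that work for both $g$ and the identity function—is the one genuinely delicate bookkeeping step, and it is handled by first fixing $U,V$ from the identity-function application and checking that the proof of Theorem~\ref{th-super} can be run with those same unitaries, or more simply by a two-step application of Theorem~\ref{th-super} combined with transitivity of the relation "$X \le UYU^* + VZV^* + \eps I$ for some unitaries" under composition, which is where the freedom of having an arbitrarily small $\eps$ is essential. Finally, step (ii), the disintegration, is routine given the factor case: write $\cN = \int^\oplus \cN_\omega\,d\nu(\omega)$, apply the factor inequality fiberwise with a measurable choice of unitaries (measurable selection), and integrate; the only care needed is measurability of $\omega\mapsto U_\omega, V_\omega$, which follows from the standard measurable-selection theorems for the fiberwise-constructed unitaries.
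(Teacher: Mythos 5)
Your proposal never actually closes the factor-level case, and both routes you sketch for it contain genuine gaps. First, the reduction $f(t)=\alpha+\beta t-g(t)$ with $g$ convex, $g\ge0$, $g(0)=0$ and $\beta<\infty$ simply does not exist for general non-negative concave $f$: it forces $f(t)\le \alpha+\beta t$ with $\alpha=f(0)$, which fails whenever $f'(0^+)=+\infty$, e.g.\ for $f(t)=t^{1/2}$ or $t^q$ with $0<q<1$ --- functions squarely within the scope of the theorem. Even when the decomposition exists, your final step needs $\beta\,(A+B-UAU^*-VBV^*)\le\eps I$ for the \emph{same} unitaries $U,V$ that Theorem \ref{th-super} produces for $g$; nothing controls this term (already $P-UPU^*$ for a projection $P$ can have norm close to $1$), and the inequality you would get from Theorem \ref{th-super} applied to $t\mapsto t$ points in the opposite direction to the one you need. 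You correctly sense that the reduction is ``not a one-line trick,'' but you never replace it with a working argument.

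Second, your fallback --- characterizing $f(A+B)\le Uf(A)U^*+Vf(B)V^*+\eps I$ by the Ky Fan submajorization $\int_0^t\mu_s(f(A+B))\,ds\le\int_0^t\mu_s(f(A))\,ds+\int_0^t\mu_s(f(B))\,ds$ --- rests on a false equivalence: submajorization is strictly weaker than domination by an element of a sum of unitary orbits. Already in $\bM_3$, $X=\mathrm{diag}(1,\tfrac12,\tfrac12)$ is submajorized by the rank-two projection $C$ (with $D=0$), yet $X\le UCU^*$ fails for every unitary since $\lambda_3(X)>0=\lambda_3(C)$. The correct tool is \emph{pointwise} spectral dominance $\lambda_t(X)\le\lambda_t(Y)$ for all $t$, which Lemma \ref{lemma-sp-dominance} converts into $X\le UYU^*+\eps I$ in a factor. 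The paper's actual route is: decompose $A=T^{1/2}C^*CT^{1/2}$, $B=T^{1/2}D^*DT^{1/2}$ with $T=A+B$ and $C^*C+D^*D$ the support of $T$, invoke the Brown--Kosaki (Hansen-type) inequality that $f(Z^*TZ)$ spectrally dominates $Z^*f(T)Z$ for a contraction $Z$ and non-negative concave $f$ (the reversed form of \eqref{fund-ineq}), and apply Lemma \ref{lemma-sp-dominance} as in the matrix proof of Aujla--Bourin. That contraction inequality is the key ingredient missing from your plan. Your step (ii) --- disintegration into factors plus measurable selection of the unitaries --- does match the paper and is fine once the factor case is secured.
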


Before proving the theorems we recall the notion of the spectral scale \cite{Pe}. The
{\it spectral scale} of $A\in\overline{\cN}^{+}$ is defined as
\begin{equation}\label{sp-scale}
\lambda_t(A):=\inf\{s\in\bR:\tau({\mathbf{1}}_{(s,\infty)}(A))\le t\},
\qquad t\in(0,\tau(I)),
\end{equation}
where ${\mathbf{1}}_{(s,\infty)}(A)$ is the spectral projection of $A$ corresponding to
$(s,\infty)$. We write $\lambda(A)$ for the function $t\mapsto\lambda_t(A)$ on $(0,\tau(I))$,
which is non-increasing and right-continuous. Furthermore, we write $\lambda_0(A)$ and
$\lambda_{\tau(I)}(A)$ for $\lim_{t\searrow0}\lambda_t(A)$ and
$\lim_{t\nearrow\tau(I)}\lambda_t(A)$, respectively, which are the maximal and minimal
spectra of $A$ (when $A$ is bounded). The {\it generalized $s$-numbers} \cite{FK} of $X\in\overline{\cN}$ is
$\mu_t(X):=\lambda_t(|X|)$, $t\in(0,\tau(I))$.

What we will use to prove the theorems is the following lemma. The lemma is rather
well-known but we give the proof for the convenience of the reader.

\begin{lemma}\label{lemma-sp-dominance}
Let $\cN$ be a finite factor and $A,B\in\overline{\cN}^+$. If $B$ spectrally dominates $A$,
i.e., $\lambda_t(A)\le\lambda_t(B)$ for all $t\in(0,\tau(I))$, then for every $\eps>0$ there
exists a unitary $U\in\cN$ such that $UAU^*\le B+\eps I$.
\end{lemma}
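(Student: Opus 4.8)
The plan is to reduce the statement to the case where $A$ has finite spectrum, where a direct Hall-type matching argument inside the finite factor produces the required unitary. First I would fix $\eps>0$ and approximate $A$ from above by an operator $A'$ with finite spectrum satisfying $A\le A'\le A+\eps I$; concretely, if $A$ is bounded one partitions $[0,\|A\|_\infty]$ into intervals of length $<\eps$ and replaces $A$ by the step operator $A'=\sum_j c_j\mathbf{1}_{[c_{j-1},c_j)}(A)$ with $c_j$ the right endpoints, and if $A$ is unbounded (but $\tau$-measurable) one first truncates, using that $\lambda_t(A)<\infty$ for $t>0$ and that $\tau(\mathbf{1}_{(s,\infty)}(A))\to 0$ as $s\to\infty$ to discard a spectral projection of arbitrarily small trace; since $A\le A'$ one has $\lambda_t(A)\le\lambda_t(A')$, and because the approximation is within $\eps$, one checks $\lambda_t(A')\le\lambda_t(B)+\eps$ for all $t$, i.e.\ $B+\eps I$ spectrally dominates $A'$. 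Thus it suffices to find a unitary $U$ with $UA'U^*\le B+\eps I$, and replacing $B$ by $B+\eps I$ and $A$ by $A'$, it suffices to treat the case where $A$ has finite spectrum and $B$ spectrally dominates $A$, producing $U$ with $UAU^*\le B$ (we still have room to lose another $\eps$ if needed).

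Next I would write $A=\sum_{j=1}^m a_j P_j$ with $a_1>\cdots>a_m\ge 0$, $P_j$ orthogonal projections with $\sum_j P_j=I$, and set $q_j:=\tau(P_1+\cdots+P_j)$, so that $\lambda_t(A)=a_j$ for $t\in[q_{j-1},q_j)$. The spectral dominance hypothesis says $\lambda_t(B)\ge a_j$ on $[q_{j-1},q_j)$, equivalently $\tau(\mathbf{1}_{[a_j,\infty)}(B))\ge q_j$ for each $j$ (using right-continuity and the definition of $\lambda$). Let $E_j:=\mathbf{1}_{[a_j,\infty)}(B)$, an increasing family of projections with $\tau(E_j)\ge q_j$. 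Because $\cN$ is a \emph{finite factor}, its trace takes a continuum of values on projections and any two projections with $\tau(E)\le\tau(F)$ satisfy $E\precsim F$, so I can choose, inside the ranges of the $E_j$'s, projections $Q_j\le E_j$ with $\tau(Q_1+\cdots+Q_j)=q_j$ for all $j$ — i.e.\ pick $Q_1\le E_1$ with $\tau(Q_1)=q_1$, then $Q_2\le E_2\ominus Q_1$ with the right trace (possible since $\tau(E_2)\ge q_2=q_1+\tau(Q_2)$ forces $\tau(E_2\ominus Q_1)\ge\tau(Q_2)$), and so on; the $Q_j$ are mutually orthogonal and sum to $I$ since $\sum_j\tau(Q_j)=q_m=\tau(I)$. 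Now $\tau(Q_j)=\tau(P_j)$, so by comparison of projections in the factor there is a unitary $U\in\cN$ with $UP_jU^*=Q_j$ for every $j$. Then $UAU^*=\sum_j a_j Q_j$, and since $Q_j\le E_j=\mathbf{1}_{[a_j,\infty)}(B)$ we get $a_jQ_j\le BQ_j$... more carefully, on the subspace $Q_j\mathcal H$ one has $B\ge a_j$, hence $Q_jBQ_j\ge a_jQ_j$; summing and using $\sum_jQ_j=I$ gives $\sum_j a_jQ_j\le\sum_j Q_jBQ_j$. The last quantity is the pinching of $B$ by the $Q_j$'s, which is $\le B$ in the sense that $\sum_j Q_jBQ_j\le B$ fails in general — so instead I would argue directly: $UAU^*=\sum_j a_jQ_j$ and I claim $\sum_j a_jQ_j\le B$. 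Write $B=\sum$ (spectral integral); for any unit vector $\xi$, $\langle\xi,(\sum_j a_jQ_j)\xi\rangle=\sum_j a_j\|Q_j\xi\|^2$ while $\langle\xi,B\xi\rangle\ge\sum_j\langle Q_j\xi,BQ_j\xi\rangle/\!$... This needs the orthogonality $Q_j\perp Q_k$ together with $BQ_j\mathcal H\subseteq$? It is cleanest to note $\sum_j a_jQ_j\le\sum_j \mathbf{1}_{[a_j,\infty)}(B)\,a_j$-weighted bound fails; the correct clean statement is: since the $Q_j$ are orthogonal with $Q_j\le E_j$ and $E_j$ increasing, one has $\sum_{j=1}^m a_jQ_j\le B$ because this operator's spectral scale is dominated — but that is circular. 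The honest route: $\sum_j a_j Q_j = \sum_j a_j Q_j$ and using $E_j$ increasing, $Q_j\le E_j\ominus E_{j+1}+\cdots$, regroup so that on the band $E_{k}\ominus E_{k+1}$ (where $a_{k+1}\le B<a_k$, roughly) the coefficient coming from $\sum a_jQ_j$ is at most $a_k$; since on that band $B\ge a_{k+1}$ and we have $\eps$ room from the first reduction, conclude $\sum_j a_jQ_j\le B+\eps I$.

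The main obstacle is precisely this last inequality $\sum_j a_jQ_j\le B$ (or $\le B+\eps I$): the naive pinching bound goes the wrong way, so the argument must exploit that the projections $Q_j$ sit inside the \emph{nested} spectral projections $E_j=\mathbf{1}_{[a_j,\infty)}(B)$ and regroup the sum $\sum_j a_jQ_j$ along the orthogonal bands $E_j\ominus E_{j+1}$, on each of which $B$ is bounded below by the relevant $a_{j+1}$. Handling the unbounded/$\tau$-measurable case in the initial approximation, and making sure all trace values needed for the Hall-type selection of the $Q_j$ are attainable, are the other points requiring care, but both are standard consequences of $\cN$ being a finite factor with a faithful normal finite trace (the trace being a continuous dimension function on projections), so I expect them to be routine once the band-regrouping inequality is in place.
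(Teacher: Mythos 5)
Your route is genuinely different from the paper's. The paper fixes a single increasing family $\{F_t\}_{0\le t\le1}$ of projections with $\tau(F_t)=t$, forms $\tilde A=\int_0^1\lambda_t(A)\,dF_t$ and $\tilde B=\int_0^1\lambda_t(B)\,dF_t$ (so that $\tilde A\le\tilde B$ is automatic), and then invokes the Hiai--Nakamura lemma that every positive $\tau$-measurable operator lies within $\eps/2$ in operator norm of a unitary conjugate of its ``standard form'' $\int_0^1\lambda_t(\cdot)\,dF_t$; this settles bounded and unbounded operators in one stroke, at the price of citing a nontrivial external result. Your proof is a self-contained transplant of the matrix argument via comparison of projections, which is a legitimate alternative, but as written it has two gaps.

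First, the inequality you flag as the main obstacle, $\sum_j a_jQ_j\le B$, is in fact true and needs no extra $\eps$: it follows by Abel summation. With $a_1>\cdots>a_m\ge0$, $a_{m+1}:=0$ and $S_j:=Q_1+\cdots+Q_j\le E_j$, one has
$$
\sum_{j=1}^m a_jQ_j=\sum_{j=1}^m(a_j-a_{j+1})S_j\le\sum_{j=1}^m(a_j-a_{j+1})E_j=h(B),
\qquad h(\lambda):=\sum_{j=1}^m(a_j-a_{j+1})\mathbf{1}_{[a_j,\infty)}(\lambda),
$$
and $h(\lambda)\le\lambda$ for all $\lambda\ge0$ (on the band $a_{k+1}\le\lambda<a_k$ one gets $h(\lambda)=a_{k+1}$), whence $h(B)\le B$. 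This is exactly the band regrouping you were groping for; once stated this way the finite-spectrum case is complete. Second, your reduction to finite spectrum only covers bounded $A$: for unbounded $A$, ``discarding a spectral projection of small trace'' cannot yield the global operator inequality $UAU^*\le B+\eps I$, because $A$ is large precisely on the discarded piece. You must keep the whole operator, e.g.\ via $A\le\eps\sum_{j\ge0}\mathbf{1}_{(j\eps,\infty)}(A)\le A+\eps I$, and then conjugate the countable decreasing family $\mathbf{1}_{(j\eps,\infty)}(A)$ under a single unitary into subprojections of $\mathbf{1}_{(j\eps,\infty)}(B)$ (whose traces dominate, by spectral dominance); this is the same Hall-type selection, but over infinitely many nested projections, and requires an explicit inductive or limiting construction that your sketch does not supply.
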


\begin{proof}
Since the matrix case is obvious without $\eps I$ in the right-hand side, we may assume
that $\cN$ is a type II$_1$ factor with the normalized trace $\tau$. Choose an increasing
family $\{F_t\}_{0\le t\le1}$ of projections in $\cN$ such that $\tau(F_t)=t$ for all
$t\in[0,1]$. Define
$$
\tilde A:=\int_0^1\lambda_t(A)\,dF_t,\qquad
\tilde B:=\int_0^1\lambda_t(B)\,dF_t.
$$
We then have $\lambda(\tilde A)=\lambda(A)$, $\lambda(\tilde B)=\lambda(B)$ and
$\tilde A\le\tilde B$. Hence the assertion follows since
$\|A-V\tilde AV^*\|_\infty<\eps/2$ and $\|B-W\tilde BW^*\|_\infty<\eps/2$ for some
unitaries $V,W\in\cN$ by \cite[Lemma 4.1]{HN2}.
\end{proof}

We now turn to the proofs of the theorems, which are based on the spectral dominance
theorem \cite{BK} and the central direct decomposition.

\begin{proof} (Theorem \ref{th-super})\quad
First assume that $\cN$ is a finite factor. Then the matrix case is \cite[Theorem 2.1]{AB}
(without $\eps I$ in the left-hand side). The proof in the type II$_1$ factor case is similar
based on \cite{BK}. For any contraction $Z\in\cN$ and any $T\in\overline{\cN}^+$ it is known
\cite[Lemma 10\,(ii)]{BK} that $Z^*g(T)Z$ spectrally dominates $g(Z^*TZ)$. Hence, by
Lemma \ref{lemma-sp-dominance},
\begin{equation}\label{fund-ineq}
Z^*g(T)Z+\eps I\ge Wg(Z^*TZ)W^*
\end{equation}
for some unitary $W\in\cN$. Then, by arguing as in the proof of \cite[Theorem 2.1]{AB} or
\cite[Corollary 3.2]{BL}, one can see that the claimed inequality holds for some unitaries
$U,V\in\cN$.

For the non-factor case, as in \cite{Hi}, we take the central direct integral decomposition
into factors (see \cite{Ta}) as
\begin{equation}\label{decomp-1}
\{\cN,\cH\}=\int_\Omega^\oplus\{\cN_\omega,\cH_\omega\}\,d\nu(\omega),
\qquad\tau=\int_\Omega^\oplus\tau_\omega\,d\nu(\omega)
\end{equation}
over a finite measure space $(\Omega,\mathcal{B},\nu)$ that may be assumed to be complete.
Then $A,B$ are represented as
\begin{equation}\label{decomp-2}
A=\int_\Omega^\oplus A_\omega\,d\nu(\omega),\qquad
B=\int_\Omega^\oplus B_\omega\,d\nu(\omega)
\end{equation}
with unique (a.e.) measurable fields $\omega\mapsto A_\omega,B_\omega$ of 
$\tau_\omega$-measurable positive operators affiliated with $\cN_\omega$. For each
$\omega\in\Omega$, from the first step of the proof, there are unitaries
$U,V\in\cN_\omega$ such that
\begin{equation}\label{cond-1}
g(A_\omega+B_\omega)+\eps I_\omega\ge Ug(A_\omega)U^*+Vg(B_\omega)V^*.
\end{equation}
Now, define $F(\omega)$ to be the set of pairs $(U,V)$ of unitaries in $\cN_\omega$
satisfying \eqref{cond-1}, and prove that there are measurable fields
$\omega\mapsto U_\omega$ and $\omega\mapsto V_\omega$ such that
$(U_\omega,V_\omega)\in F(\omega)$ for all $\omega\in\Omega$. For this, as in \cite{Hi},
we may assume that $\omega\mapsto\cH_\omega$ is a constant field $\cH_0$. Then $F(\cdot)$
is a multifunction whose values are non-empty closed subsets of a Polish space
$B(\cH_0)_1\times B(\cH_0)_1$, where $B(\cH_0)_1$ is the closed unit ball of $B(\cH_0)$
with the strong* topology. By using \cite[Theorem 6.1]{Him} and \cite[Lemma 3.2\,(1)]{Hi}
as well as the fact that $\overline\cN$ with the $\tau$-measure topology is a Polish space, 
we infer that the graph
$$
\{(\omega,U,V)\in\Omega\times B(\cH_0)_1\times B(\cH_0)_1:(U,V)\in F(\omega)\}
$$
of $F(\cdot)$ belongs to $\mathcal{B}\otimes\mathcal{B}(B(\cH_0)_1\times B(\cH_0)_1)$,
where $\mathcal{B}(B(\cH_0)_1\times B(\cH_0)_1)$ is the Borel $\sigma$-field of
$B(\cH_0)_1\times B(\cH_0)_1$. Hence, as in \cite{Hi} the measurable selection theorem
(e.g., \cite{Him}) yields measurable fields
$\omega\mapsto U_\omega$ and $\omega\mapsto V_\omega$ as desired, so we obtain the claimed
inequality with the unitaries $U=\int_\Omega^\oplus U_\omega\,d\nu(\omega)$ and
$V=\int_\Omega^\oplus V_\omega\,d\nu(\omega)$ in $\cN$.
\end{proof}

\begin{proof} (Theorem \ref{th-sub})\quad
The matrix case is in \cite[Theorem 2.1]{AB}. In the type II$_1$ factor case, inequality
\eqref{fund-ineq} for a contraction $Z\in\cN$ and an $T\in\overline{\cN}^+$ is, in turn,
reversed as
$$
Z^*f(T)Z\le Wf(Z^*TZ)W^*+\eps I
$$
by \cite[Lemma 10\,(i)]{BK} and Lemma \ref{lemma-sp-dominance} similarly. (Here, note that
although our assumption on $f$ is slightly weaker than that in \cite{BK}, the proof of
\cite[Lemma 10\,(i)]{BK} can easily be modified to show that $f(Z^*TZ)$ spectrally
dominates $Z^*f(T)Z$.) Hence the desired assertion follows in the factor case. Now, the
proof for the non-factor case is the same as above.
\end{proof}

An idea of the above proofs is to combine a unitary orbit technique with the measurable
selection theorem. We end the section with another illustration of the idea, along the
lines of \cite[Proposition 2.11]{BL} for the matrix case.

\begin{prop}\label{prop-convex}
Let $g(t)$ be a non-decreasing convex function on $[0,\infty)$. Then, for every
$X,Y\in\overline{\cN}$ and every $\eps>0$, there exist unitaries $U,V\in\cN$ such that
$$
g(|X+Y|) \le {Ug(|X|+|Y|)U^*+Vg(|X^*|+|Y^*|)V^*\over2}+\eps I.
$$
\end{prop}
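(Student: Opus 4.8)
The plan is to follow the two-step pattern already used for Theorems \ref{th-super} and \ref{th-sub}: first prove the inequality when $\cN$ is a finite factor, and then reduce the general case to the factor case via the central direct integral decomposition together with a measurable selection. The second step should be essentially a repetition of the corresponding part of the proof of Theorem \ref{th-super}. Writing $\{\cN,\cH\}=\int_\Omega^\oplus\{\cN_\omega,\cH_\omega\}\,d\nu(\omega)$ and $X=\int_\Omega^\oplus X_\omega\,d\nu(\omega)$, $Y=\int_\Omega^\oplus Y_\omega\,d\nu(\omega)$ as in \eqref{decomp-1}--\eqref{decomp-2}, one lets $F(\omega)$ be the set of pairs $(U,V)$ of unitaries in $\cN_\omega$ satisfying
$$
g(|X_\omega+Y_\omega|)\le\frac{Ug(|X_\omega|+|Y_\omega|)U^*+Vg(|X_\omega^*|+|Y_\omega^*|)V^*}{2}+\eps I_\omega ;
$$
the factor case gives $F(\omega)\neq\emptyset$ for every $\omega$, and exactly the same Polish-space and graph-measurability considerations as in the proof of Theorem \ref{th-super} (using \cite[Theorem 6.1]{Him}, \cite[Lemma 3.2]{Hi} and the measure topology on $\overline{\cN}$) produce measurable fields $\omega\mapsto U_\omega,V_\omega$ with $(U_\omega,V_\omega)\in F(\omega)$, whence $U=\int_\Omega^\oplus U_\omega\,d\nu(\omega)$ and $V=\int_\Omega^\oplus V_\omega\,d\nu(\omega)$ do the job. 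I expect no difficulty in this step.

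The substance is the finite factor case. For $\cN=\bM_n$ it is \cite[Proposition 2.11]{BL}, and for a type II$_1$ factor I would transplant the matrix argument of \cite{BL} into the continuous geometry of $\cN$, using the spectral-dominance machinery of this section exactly where \cite{BL} uses elementary matrix facts. The skeleton is as follows. With polar decompositions $X=U_1|X|$, $Y=U_2|Y|$, work inside $\bM_2(\cN)$ with the column $Q=\begin{bmatrix}|X|^{1/2}\\ |Y|^{1/2}\end{bmatrix}$ and the row $R=\begin{bmatrix}U_1|X|^{1/2}&U_2|Y|^{1/2}\end{bmatrix}$, so that $X+Y=RQ$ with $Q^*Q=|X|+|Y|$ and $RR^*=|X^*|+|Y^*|$. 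Polar decompositions of $Q$ and of $R^*$ in $\bM_2(\cN)$ yield a contraction $K\in\cN$ with $X+Y=(|X^*|+|Y^*|)^{1/2}\,K\,(|X|+|Y|)^{1/2}$; one more polar decomposition in the finite algebra $\cN$ turns this into $|X+Y|=\big|\,|S|\,(|X|+|Y|)^{1/2}\,\big|$ with $|S|^2=K^*(|X^*|+|Y^*|)K$, and Lemma \ref{AGM} then gives a unitary $V_0\in\cN$ with $|X+Y|\le\tfrac12\bigl((|X|+|Y|)+V_0K^*(|X^*|+|Y^*|)KV_0^*\bigr)$. Since $K$ is a contraction, $|X^*|+|Y^*|$ spectrally dominates $K^*(|X^*|+|Y^*|)K$, so Lemma \ref{lemma-sp-dominance} converts this into: for every $\delta>0$ there is a unitary $W\in\cN$ with $|X+Y|\le\tfrac12\bigl((|X|+|Y|)+W(|X^*|+|Y^*|)W^*\bigr)+\delta I$. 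It then remains to push the non-decreasing convex function $g$ through this estimate; using monotonicity and continuity of $g$ this reduces to the factor-case claim that, for $A,B\in\overline{\cN}^{+}$, there are unitaries $U,V\in\cN$ with $g\bigl(\tfrac{A+B}{2}\bigr)\le\tfrac12\bigl(Ug(A)U^*+Vg(B)V^*\bigr)+\eps I$, which I would obtain by realizing $\tfrac{A+B}{2}$ as a pinching $\tfrac12(M+RMR)$ of a suitable positive $M=\begin{bmatrix}A&\ast\\\ast&B\end{bmatrix}\in\bM_2(\cN)$, applying $g$, pinching once more, and invoking Lemma \ref{lemma-sp-dominance} again; this is precisely the point at which \cite[Proposition 2.11]{BL} is followed.

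The step I expect to be the main obstacle is this last one: pushing a merely (scalar) convex $g$ through an average of unitary conjugates while keeping only two terms on the right-hand side. Operator convexity of $g$ is not available, so one cannot argue by plain Jensen; instead one must exploit the $2\times2$ pinching structure — an average of two unitary conjugates, realized in a II$_1$ factor by a trace-$\tfrac12$ projection $e$ and the symmetry $2e-I$ — together with the spectral-scale/spectral-dominance lemma, carefully reproducing the matrix computation of \cite{BL} up to the unavoidable $\eps I$, and then absorbing the auxiliary $\delta I$ by continuity of $g$. Everything else — the block-matrix and arithmetic–geometric-mean manipulations, the direct-integral patching, and the measurable selection — is routine given the tools assembled in this section and the pattern of the proofs of Theorems \ref{th-super} and \ref{th-sub}.
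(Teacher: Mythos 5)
Your overall strategy --- positivity of a $2\times2$ block operator over $\cN$ to get a triangle-type operator inequality, then spectral dominance together with Lemma \ref{lemma-sp-dominance} to push $g$ through, then measurable selection for the non-factor case --- is the same as the paper's, and the direct-integral reduction as well as the final two-term convexity step (which is \eqref{fund-ineq} for the isometry $Z$ with $Z^*\diag(A,B)Z=(A+B)/2$, after first normalizing to $g(0)=0$ by subtracting the constant $g(0)\ge0$ and adding it back, since the proposition does not assume $g(0)=0$) are sound. There is, however, one step in your factor-case argument that fails as written precisely in the unbounded case the proposition is meant to cover. You first trade the contraction $K$ for a unitary, obtaining $|X+Y|\le\tfrac12\bigl((|X|+|Y|)+W(|X^*|+|Y^*|)W^*\bigr)+\delta I$, and only afterwards apply $g$, planning to ``absorb the auxiliary $\delta I$ by continuity of $g$''. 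That absorption needs $g(A+\delta I)\le g(A)+\eps I$ with $A=|X|+|Y|$ unbounded, which is false for, say, $g(t)=t^2$, where $g(t+\delta)-g(t)=2\delta t+\delta^2$ is unbounded for every $\delta>0$. The repair is to reorder: keep the exact inequality $|X+Y|\le\tfrac12\bigl(A+VK^*(|X^*|+|Y^*|)KV^*\bigr)$, apply $g$ and the two-term pinching to conclude that $g(|X+Y|)$ is spectrally dominated by $\tfrac12\bigl(g(A)+g(VK^*(|X^*|+|Y^*|)KV^*)\bigr)$, observe that $g(VK^*(|X^*|+|Y^*|)KV^*)$ is in turn spectrally dominated by $g(|X^*|+|Y^*|)$ because $g$ is non-decreasing and $K$ is a contraction, and only then invoke Lemma \ref{lemma-sp-dominance} to introduce the $\eps I$.

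Even with that fix, your route to the intermediate operator inequality is longer than necessary. The paper notes that the block operator with diagonal entries $|X^*|,|X|$ and off-diagonal entries $X,X^*$ is positive (via $X=|X^*|^{1/2}V|X|^{1/2}$), hence so is the sum of the two such blocks for $X$ and $Y$, and compresses that positive element of $\overline{\bM_2(\cN)}$ by the row $[-W^*\ \ I]$, where $W$ is the unitary phase of $X+Y$. This yields \eqref{triangle-block}, namely $|X+Y|\le\tfrac12\bigl(|X|+|Y|+W^*(|X^*|+|Y^*|)W\bigr)$, exactly, with a genuine unitary and no error term, in one line; your column/row factorization, the contraction $K$, the appeal to Lemma \ref{AGM}, and the extra application of Lemma \ref{lemma-sp-dominance} are then all unnecessary, and the troublesome $\delta$ never appears. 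From \eqref{triangle-block} onward the paper's proof is exactly your plan: spectral dominance of $g(|X+Y|)$ by the half-sum of the $g$'s (the paper outsources this convexity step to \cite[Proposition 4.6\,(ii)]{FK} rather than redoing the pinching), then Lemma \ref{lemma-sp-dominance}, then measurable selection.
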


\begin{proof}
For any $X\in\overline{\cN}$, the decomposition  $X=|X^*|^{1/2}V|X|^{1/2}$ with a unitary
$V$ shows that  
$$
\begin{bmatrix}
|X^*| &X \\
X^*&|X|
\end{bmatrix}
$$
is a positive $\tau$-measurable operator affiliated with $\bM_{2}(\cN)=\cN\otimes\bM_2$. 
Consequently,
$$
\begin{bmatrix}
|X^*| +|Y^*|& X+Y \\
X^* +Y^*&|X|+|Y|
\end{bmatrix}
$$
also belongs to $\overline{\bM_{2}(\cN)}^+$. Let $W$ be the unitary part in $X+Y=W|X+Y|$.
Then
$$
\begin{bmatrix}
-W^*& I
\end{bmatrix}
\begin{bmatrix}
|X^*| +|Y^*|&X+Y \\
X^* +Y^*&|X|+|Y|
\end{bmatrix}
\begin{bmatrix}
-W \\ I
\end{bmatrix}
$$
is in $\overline{\cN}^+$, so that
\begin{equation}\label{triangle-block}
|X+Y| \le \frac{|X|+|Y|+W^*(|X^*|+|Y^*|)W}{2}.
\end{equation}
As $g(t)$ is non-decreasing and convex, in the factor case, \eqref{triangle-block} combined
with \cite[Proposition 4.6\,(ii)]{FK} shows that $g(|X+Y|)$ is spectrally dominated by
$\{g(|X|+|Y|)+W^*g(|X^*|+|Y^*|)W\}/2$. Using Lemma \ref{lemma-sp-dominance} completes the
proof of the proposition when $\cN$ is a factor. The general case follows by using the
measurable selection method as in the previous proofs.
\end{proof}

\section{Extension of symmetric norms and anti-norms}

The aim of this section is to show that a symmetric norm on $\cN$ and a symmetric anti-norm
on $\cN^+$ can naturally be extended, respectively, to $\overline{\cN}$ and to
$\overline{\cN}^+$. First, let $\|\cdot\|$ be a symmetric norm on $\cN$. For each
$X\in\overline{\cN}$ and $s>0$, the function $t\mapsto\beta_s(t):=\min\{s,t\}$ is used to
define $|X|\wedge s:=\beta_s(|X|)$. Since Lemma \ref{monot} implies that
$\|\,|X|\wedge s\|$ is increasing as $s\nearrow\infty$, a natural extension of $\|\cdot\|$
to $\overline{\cN}$ is given as
$$
\|X\|:=\lim_{s\nearrow\infty}\|\,|X|\wedge s\|
=\sup_{s>0}\|\,|X|\wedge s\|\in[0,\infty].
$$

\begin{prop}\label{prop-ext}
The above extension of $\|\cdot\|$ becomes a symmetric norm on $\overline{\cN}$ (permitting
value $\infty$).
\end{prop}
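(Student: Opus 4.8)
The plan is to verify the three defining properties of a symmetric norm — absolute homogeneity, the triangle inequality, and unitary invariance $\|UXV\|=\|X\|$ — for the extended functional, and then to note that it is in fact a genuine norm (positive definite) rather than merely a semi-norm. The key technical device throughout is the monotone approximation $|X|\wedge s \nearrow |X|$ as $s\nearrow\infty$ in the strong (or measure) topology, together with Lemma~\ref{monot}, which guarantees that $\|\,|X|\wedge s\,\|$ is increasing in $s$ and hence that the supremum defining $\|X\|$ is unambiguous. Unitary invariance is immediate: for unitaries $U,V\in\cN$ one has $|UXV|=V^*|X|V$, so $|UXV|\wedge s = V^*(|X|\wedge s)V$, and applying property $(2)$ of symmetric norms on $\cN$ to each truncation and passing to the supremum gives $\|UXV\|=\|X\|$. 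Homogeneity $\|\alpha X\|=|\alpha|\,\|X\|$ for scalars $\alpha$ follows similarly: $|\alpha X|\wedge s = |\alpha|\,(|X|\wedge s/|\alpha|)$ for $\alpha\neq0$, and one rescales the supremum; the case $\alpha=0$ is trivial.

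The substantive point is the triangle inequality $\|X+Y\|\le\|X\|+\|Y\|$ for $X,Y\in\overline{\cN}$. First I would reduce to the positive case in the following sense: by Proposition~\ref{prop-convex} applied to $g(t)=\min\{s,t\}$ — which is non-decreasing, concave rather than convex, so one should instead invoke the truncation directly — we have for each $s>0$ the operator inequality $|X+Y|\wedge s \le$ (something controlled by truncations of $|X|+|Y|$ and $|X^*|+|Y^*|$). More cleanly: start from \eqref{triangle-block}, namely $|X+Y|\le\bigl(|X|+|Y|+W^*(|X^*|+|Y^*|)W\bigr)/2$, then apply the monotone operator function $\beta_s$ and use that $\beta_s$ is concave with $\beta_s(0)=0$, so $\beta_s$ is subadditive on $\overline\cN^+$ in the sense that $\beta_s(C+D)\le$ a unitary orbit average of $\beta_s(C)$ and $\beta_s(D)$ (this is exactly Theorem~\ref{th-sub} with an $\eps I$). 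Combining these, for every $\eps>0$ there are unitaries, so that $|X+Y|\wedge s$ is dominated by a convex combination of conjugates of $(|X|+|Y|)\wedge s'$ and $(|X^*|+|Y^*|)\wedge s'$ for suitable $s'$, up to $\eps I$; applying the symmetric norm $\|\cdot\|$ on $\cN$, using Lemma~\ref{monot}, unitary invariance, the triangle inequality on $\cN$, and \eqref{norm-ineq} to absorb the $\eps I$ term, and then letting $\eps\searrow0$ and $s\nearrow\infty$ yields $\|X+Y\|\le\|X\|+\|Y\|$ (where one also uses that $|X^*|$ and $|X|$ are unitarily equivalent via the polar decomposition in the finite algebra, so $\|\,|X^*|\wedge s\,\|=\|\,|X|\wedge s\,\|$, and $\|(|X|+|Y|)\wedge s\|\le\|\,|X|\wedge s\,\|+\|\,|Y|\wedge s\,\|$ again by Theorem~\ref{th-sub} applied to $\beta_s$). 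Positive definiteness: if $\|X\|=0$ then $\|\,|X|\wedge s\,\|=0$ for all $s$, hence $|X|\wedge s=0$ for all $s$ since $\|\cdot\|$ is a norm on $\cN$, forcing $|X|=0$ and $X=0$.

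I expect the main obstacle to be the bookkeeping in the triangle inequality: one must be careful that truncating a sum $C+D$ is not the same as summing the truncations, so Theorem~\ref{th-sub} (applied with $f=\beta_s$, which is non-negative concave on $[0,\infty)$, fitting its hypotheses exactly) must be invoked with its unavoidable $\eps I$ error term, and this error has to be pushed through the norm via \eqref{norm-ineq} and then killed by a limit \emph{before} the limit $s\nearrow\infty$ is taken — so the order of the two limits and the choice of the auxiliary truncation level $s'$ versus $s$ need attention. A slightly cleaner alternative, which I would fall back on if the double-limit argument gets awkward, is to prove the triangle inequality first on $\overline\cN^+$ using only property $(3)$ of symmetric norms together with Theorem~\ref{th-sub} for $\beta_s$, and then bootstrap to general $X,Y$ via \eqref{triangle-block} and the observation $\|\,|X^*|\wedge s\,\|=\|\,|X|\wedge s\,\|$; in this route the only genuinely new ingredient beyond the bounded case is the interchange of $\sup_s$ with the finite operations, which is justified by monotonicity and the fact that a supremum of an increasing net of numbers is additive on the right-hand side when each summand increases.
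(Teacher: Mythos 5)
Your overall strategy matches the paper's: subadditivity on $\overline{\cN}^+$ via Theorem \ref{th-sub} applied to $\beta_s$ (with the $\eps I$ term absorbed through \eqref{norm-ineq} and removed before letting $s\nearrow\infty$), and then a bootstrap to general $X,Y$ via a triangle-type operator inequality — the paper's primary route uses $|X+Y|\le U|X|U^*+V|Y|V^*$ from Fack--Kosaki but explicitly offers \eqref{triangle-block} as an alternative, which is the one you chose. The homogeneity, unitary invariance and positive-definiteness parts are correct.

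There is, however, a genuine gap in the bootstrap step. To pass from the operator inequality \eqref{triangle-block} (between generally unbounded operators) to an inequality between truncations, you propose to ``apply the monotone operator function $\beta_s$'' to both sides. But $\beta_s(t)=\min\{s,t\}$ is \emph{not} operator monotone (it is piecewise linear and non-affine, whereas operator monotone functions on $[0,\infty)$ are analytic), so $C\le D$ does not imply $\beta_s(C)\le\beta_s(D)$. What does survive is only the spectral-scale inequality $\lambda_t(C\wedge s)\le\lambda_t(D\wedge s)$, and converting spectral dominance into a norm inequality for a symmetric norm that is not assumed fully symmetric requires producing a unitary $U$ with $U(C\wedge s)U^*\le D\wedge s+\eps I$: this is Lemma \ref{lemma-sp-dominance} when $\cN$ is a factor, and in general it needs the measurable selection method under the central direct integral decomposition. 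This is exactly the content of the paper's separate paragraph extending the monotonicity $A\le B\Rightarrow\|A\|\le\|B\|$ to $\overline{\cN}^+$, and it is the one genuinely non-trivial ingredient your proposal omits; without it neither your main route nor your fallback closes, since the fallback also relies on monotonicity of the extended norm on $\overline{\cN}^+$ in order to exploit \eqref{triangle-block}.
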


\begin{proof}
It is immediate to see that the extended $\|\cdot\|$ on $\overline{\cN}$ satisfies
$\|\alpha X\|=|\alpha|\,\|X\|$ for all $\alpha\in\bC$ and $\|UXV\|=\|X\|$ for all unitaries
$U,V\in\cN$. For every $A,B\in\overline{\cN}^+$, $s>0$ and $\eps>0$, since $\beta_s$ is
concave on $[0,\infty)$, by Theorem \ref{th-sub} there are unitaries $U,V\in\cN$ such that
$$
(A+B)\wedge s\le U(A\wedge s)U^*+V(B\wedge s)V^*+\eps I.
$$
By Lemma \ref{monot} this implies that
$$
\|(A+B)\wedge s\|\le\|A\wedge s\|+\|B\wedge s\|+\eps\|I\|.
$$
Letting $\eps\searrow0$ and $s\nearrow\infty$ gives $\|A+B\|\le\|A\|+\|B\|$. Next we extend
the monotonicity of $\|\cdot\|$ to $A\le B$ in $\overline{\cN}^+$. For every $s>0$ and
$\eps>0$ there exists a unitary $U\in\cN$ such that $U(A\wedge s)U^*\le B\wedge s+\eps I$.
Since $\lambda_t(A\wedge s)\le\lambda_t(B\wedge s)$ for all $t\in(0,\tau(I))$, this follows
from Lemma \ref{lemma-sp-dominance} when $\cN$ is a factor. For the non-factor case, we can
use the measurable selection method under the central direct decomposition as in the previous
section. Therefore, $\|A\wedge s\|\le\|B\wedge s\|+\eps\|I\|$, which implies that
$\|A\|\le\|B\|$. Now, the subadditivity $\|X+Y\|\le\|X\|+\|Y\|$ in the whole
$\overline{\cN}$ follows by the triangle inequality
$$
|X+Y|\le U|X|U^*+V|Y|V^*
$$
for some unitaries $U,V\in\cN$ (\cite[Lemma 4.3]{FK}, \cite{Ko}), or else by use of
\eqref{triangle-block}.
\end{proof}

Secondly, let $\|\cdot\|_!$ be a symmetric anti-norm on $\cN^+$. For every
$A\in\overline{\cN}^+$ and $s>0$, since $\|A\wedge s\|_!$ increasing as $s\nearrow\infty$,
we can extend $\|\cdot\|_!$ to $\overline{\cN}^+$ as
$$
\|A\|_!:=\lim_{s\nearrow\infty}\|A\wedge s\|_!\in[0,\infty].
$$

\begin{prop}\label{prop-ext-anti}
The above extension of $\|\cdot\|_!$ to $\overline{\cN}^+$ still satisfies the three
conditions $(1)_!$, $(2)_!$ and $(3)_!$.
\end{prop}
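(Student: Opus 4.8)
The plan is to verify the three homogeneity/invariance/superadditivity conditions for the extended functional $\|\cdot\|_!$ on $\overline{\cN}^+$, using that each is already known on $\cN^+$ together with the fact that the cut-offs $A\wedge s$ behave well. Recall that $\|A\|_!=\lim_{s\nearrow\infty}\|A\wedge s\|_!=\sup_{s>0}\|A\wedge s\|_!$, the limit of an increasing net by Lemma \ref{monot} (more precisely, by monotonicity of $\|\cdot\|_!$, which follows from $(3)_!$ applied as $\|B\|_!\ge\|A\|_!+\|B-A\|_!\ge\|A\|_!$ when $A\le B$).

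First I would check $(1)_!$. For $\alpha>0$ one has $(\alpha A)\wedge s=\alpha(A\wedge(s/\alpha))$, so $\|(\alpha A)\wedge s\|_!=\alpha\|A\wedge(s/\alpha)\|_!$ by $(1)_!$ on $\cN^+$; letting $s\nearrow\infty$ gives $\|\alpha A\|_!=\alpha\|A\|_!$, and the case $\alpha=0$ is trivial. Next, $(2)_!$ is immediate: for a unitary $U\in\cN$ we have $U(A\wedge s)U^*=(UAU^*)\wedge s$ because $\beta_s$ applied to $UAU^*$ commutes with conjugation by $U$ via the spectral theorem, so $\|(UAU^*)\wedge s\|_!=\|A\wedge s\|_!$ by $(2)_!$ on $\cN^+$, and passing to the limit finishes it.

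The main work is $(3)_!$, the superadditivity $\|A+B\|_!\ge\|A\|_!+\|B\|_!$ for $A,B\in\overline{\cN}^+$. Here the obstacle is that $(A+B)\wedge s$ is not simply $A\wedge s+B\wedge s$, so I cannot directly invoke $(3)_!$ on $\cN^+$. Instead I would use the subadditive operator inequality already proved: since $\beta_s(t)=\min\{s,t\}$ is a non-negative concave function on $[0,\infty)$, Theorem \ref{th-sub} gives, for every $\eps>0$, unitaries $U,V\in\cN$ with
$$
(A+B)\wedge s\le U(A\wedge s)U^*+V(B\wedge s)V^*+\eps I.
$$
Taking $\|\cdot\|_!$ and using the monotonicity of $\|\cdot\|_!$ on $\cN^+$ together with $(3)_!$, $(2)_!$ and $(1)_!$ there yields
$$
\|(A+B)\wedge s\|_!\le\|A\wedge s\|_!+\|B\wedge s\|_!+\eps\|I\|_!.
$$
Hmm — this is the wrong direction for superadditivity; so instead I would apply the \emph{super}additive inequality of Theorem \ref{th-super} to the convex function $g(t)=t-\beta_s(t)=\max\{0,t-s\}=(t-s)_+$, which is non-negative convex on $[0,\infty)$ with $g(0)=0$. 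That gives unitaries $U,V$ with $g(A+B)+\eps I\ge Ug(A)U^*+Vg(B)V^*$; equivalently, writing $X\wedge s=X-g(X)$ on each operator, $(A+B)\wedge s\ge U(A\wedge s)U^*+V(B\wedge s)V^*-(A+B)+U A U^*+VBV^*-\eps I$. This is getting tangled, so the cleaner route is: note $(A+B)\wedge 2s\ge (A\wedge s)+(B\wedge s)$ does \emph{not} hold in general either, but one does have, by Theorem \ref{th-super} applied to $g(t)=(t-s)_+$ after rearranging, unitaries with $(A+B)\wedge s + \eps I\ge U(A\wedge s)U^* + V(B\wedge s)V^* - [\,U g(A)U^* + V g(B)V^* - g(A+B)\,]$ where the bracket is $\le 0$; hence $(A+B)\wedge s+\eps I\ge U(A\wedge s)U^*+V(B\wedge s)V^*$ fails — rather, I should instead invoke the subadditive Theorem \ref{th-sub} directly on $\beta_s$ but read it as: $U^*\bigl((A+B)\wedge s\bigr)U\le (A\wedge s)+ \text{(stuff)}$ won't help. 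I will therefore argue more carefully: apply $(3)_!$ on $\cN^+$ to $A\wedge s$ and $B\wedge s$ to get $\|A\wedge s\|_!+\|B\wedge s\|_!\le \|(A\wedge s)+(B\wedge s)\|_!$, then observe $(A\wedge s)+(B\wedge s)\le (A+B)\wedge 2s + \eps I$ holds by Theorem \ref{th-super} with $g(t)=(t-s)_+$ (since $(A\wedge s)+(B\wedge s) = (A+B) - (g(A)+g(B)) \le (A+B) - g(A+B) + \eps I = (A+B)\wedge 2s$... need $g(A)+g(B)\ge g(A+B)$, false as $g$ is convex not superadditive—but Theorem \ref{th-super} gives $g(A+B)+\eps I\ge Ug(A)U^*+Vg(B)V^*$, a \emph{unitary-orbit} statement, so by monotonicity $\|(A+B)\wedge 2s\|_! \ge \|(A\wedge s)+(B\wedge s)\|_! - \eps\|I\|_!$ after conjugating, giving $\|(A+B)\wedge 2s\|_!\ge\|A\wedge s\|_!+\|B\wedge s\|_!-\eps\|I\|_!$). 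Letting $\eps\searrow 0$ and then $s\nearrow\infty$ (noting $\|(A+B)\wedge 2s\|_!\nearrow\|A+B\|_!$) completes $(3)_!$. The one genuinely delicate point, which I would handle exactly as in the proof of Proposition \ref{prop-ext}, is that all these inequalities are passed through $\|\cdot\|_!$ only via its monotonicity on $\cN^+$, so I must also record that the extended $\|\cdot\|_!$ on $\overline{\cN}^+$ is still monotone — which again follows from Lemma \ref{lemma-sp-dominance} plus the measurable-selection argument in the non-factor case, exactly as for norms.
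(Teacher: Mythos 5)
Your verification of $(1)_!$ and $(2)_!$ is fine, and your overall skeleton for $(3)_!$ — reduce to the scalar inequality $\|(A+B)\wedge 2s\|_!\ge\|A\wedge s\|_!+\|B\wedge s\|_!$ (up to an $\eps$), then let $\eps\searrow0$ and $s\nearrow\infty$ — is exactly the paper's. But the one step that carries all the weight, namely that $(A+B)\wedge 2s+\eps I$ dominates a unitary conjugate of $A\wedge s+B\wedge s$, is never actually established. Your attempted derivation via Theorem \ref{th-super} with $g(t)=(t-s)_+$ cannot work: that theorem gives $(A+B)-(A+B)\wedge s+\eps I\ge U(A-A\wedge s)U^*+V(B-B\wedge s)V^*$, in which (i) the truncation that appears is $(A+B)\wedge s$, not $(A+B)\wedge 2s$ (note $(A+B)-g(A+B)=(A+B)\wedge s$, not $\wedge\,2s$ as you wrote), and (ii) the unitaries $U,V$ attached to $g(A)$ and $g(B)$ prevent the cancellation $A+B-g(A)-g(B)=A\wedge s+B\wedge s$ that your rearrangement needs. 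Rearranged, the inequality only bounds $(A+B)\wedge s$ from \emph{above} by an expression involving $A+B$ itself, which is useless here (and may be unbounded). Since $\beta_s$ is not operator monotone, one also cannot argue $A\wedge s+B\wedge s\le(A+B)\wedge 2s$ directly.

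The missing ingredient is a direct spectral-dominance comparison rather than an appeal to Theorem \ref{th-super}: since $A\wedge s+B\wedge s\le A+B$ and $A\wedge s+B\wedge s\le 2sI$, one has
$\lambda_t(A\wedge s+B\wedge s)=\lambda_t(A\wedge s+B\wedge s)\wedge 2s\le\lambda_t(A+B)\wedge 2s=\lambda_t((A+B)\wedge 2s)$ for all $t$, so Lemma \ref{lemma-sp-dominance} gives a unitary $U$ with $(A+B)\wedge 2s+\eps I\ge U(A\wedge s+B\wedge s)U^*$ when $\cN$ is a factor, and the measurable-selection argument of Section 3 handles the general case. From there your chain $\|(A+B)\wedge 2s+\eps I\|_!\ge\|A\wedge s+B\wedge s\|_!\ge\|A\wedge s\|_!+\|B\wedge s\|_!$ goes through using only $(2)_!$, $(3)_!$ and the resulting monotonicity of $\|\cdot\|_!$ on $\cN^+$ (which, as you note, is elementary from superadditivity; no spectral dominance is needed for \emph{that}). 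Also, your closing remark that you must separately establish monotonicity of the extended anti-norm on $\overline{\cN}^+$ is a red herring: the whole argument is carried out on bounded truncations, so only monotonicity on $\cN^+$ is used.
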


\begin{proof}
Since (1)$_!$ and (2)$_!$ are immediate by definition, we may prove (3)$_!$. Let
$A,B\in\overline{\cN}^+$, $s>0$ and $\eps>0$ be arbitrary. We show that there exists a
unitary $U\in\cN$ such that
\begin{equation}\label{cond-2}
(A+B)\wedge2s+\eps I\ge U(A\wedge s+B\wedge s)U^*.
\end{equation}
When $\cN$ is a factor, this follows from Lemma \ref{lemma-sp-dominance} since
we have, for  $t\in(0,\tau(I))$,
$$
\lambda_t((A+B)\wedge2s)=\lambda_t(A+B)\wedge2s
\ge\lambda_t(A\wedge s+B\wedge s)\wedge 2s = \lambda_t(A\wedge s+B\wedge s)
$$
due to $A\wedge s+B\wedge s \le 2sI$.
For the non-factor case, under the decompositions \eqref{decomp-1} and \eqref{decomp-2} we
have $A\wedge s=\int_\Omega^\oplus A_\omega\wedge s\,d\nu(\omega)$ and similarly for
$B\wedge s$ and $(A+B)\wedge2s$.
For each $\omega\in\Omega$, from the above factor case, there is a unitary $V\in\cN_\omega$
such that
$$
(A_\omega+B_\omega)\wedge2s+\eps I_\omega
\ge V(A_\omega\wedge s+B_\omega\wedge s)V^*.
$$
Now, define $F(\omega)$ to be the set of unitaries $V\in\cN_\omega$ satisfying
\eqref{cond-2}, and use the measurable selection method as before to obtain a measurable
field $\omega\mapsto U_\omega$ such that $U_\omega\in F(\omega)$ for all $\omega\in\Omega$. 
Therefore, we have \eqref{cond-2} with the unitary
$U:=\int_\Omega^\oplus U_\omega\,d\nu(\omega)$ in $\cN$, so that
$\|(A+B)\wedge2s+\eps I\|_!\ge\|A\wedge s\|_!+\|B\wedge s\|_!$. Letting $\eps\searrow0$
and $s\nearrow\infty$ gives $\|A+B\|_!\ge\|A\|_!+\|B\|_!$.
\end{proof}

\begin{remark}\label{rem-1}
Let $\cN=\oplus_{n=1}^{\infty} \cN_n$ where $\cN_n=\bM_2$ for all $n$. We equip $\cN$ with
the trace $\tau=\sum_{n\ge 1} 2^{-n}\tau_n$ where $\tau_n$ is the standard trace on $\cN_n$.
The function $t\mapsto \beta_n (t)=\min\{t,n\}$ is concave and there exist
$A_n,B_n\in\bM_2^+$ such that
${\mathrm{Tr\,}}\beta_n(A_n+B_n)<{\mathrm{Tr\,}}\beta_n(A_n)+{\mathrm{Tr\,}}\beta_n(B_n)$.
Hence, there exist $A,B\in\overline{\cN}^+$ such that, for all $n\in\bN$,
$\tau((A+B)\wedge n))<\tau(A\wedge n) +\tau(A\wedge n) $. Such a phenomenon for $\tau$
(regarded as an anti-norm) explains why the proof of the superadditivity part $(3)_!$ in
Proposition \ref{prop-ext-anti} is non-trivial.
\end{remark}

Derived anti-norms extended to $\overline{\cN}^+$ have the following simple properties;
(a) and (b) may be used to check that a symmetric anti-norm is not a derived one.

\begin{prop}\label{prop-deriv}
Let $\|\cdot\|_!$ be a derived anti-norm on $\cN^+$, which is derived from a symmetric norm
$\|\cdot\|$ on $\cN$ and a $p>0$. The following hold for the extensions of $\|\cdot\|_!$
to $\overline{\cN}^+$ and $\|\cdot\|$ to $\overline{\cN}$.
\begin{itemize}
\item[(a)] $\|A\|_!<\infty$ for all $A\in\overline{\cN}^+$.
\item[(b)] If $A\in\overline{\cN}^+$ is singular, i.e., the kernel of $A$ is non-trivial,
then $\|A\|_!=0$.
\item[(c)] If $A\in\overline{\cN}^+$ is nonsingular, then $\|A\|_!=\|A^{-p}\|^{-1/p}$ (with
convention $\infty^{-1/p}=0$).
\end{itemize}
\end{prop}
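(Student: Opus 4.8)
The plan is to reduce all three parts to one identity for \emph{bounded} operators:
\begin{equation}\label{eq-plan-key}
\|B\|_! = \|B^{-p}\|^{-1/p} \qquad\text{for every nonsingular } B\in\cN^+,
\end{equation}
where $B^{-p}:=(B^{-1})^p\in\overline{\cN}^+$ (possibly unbounded), the right-hand side uses the extension of $\|\cdot\|$ to $\overline{\cN}$, and $\infty^{-1/p}:=0$. The idea for \eqref{eq-plan-key} is to regularize $B$ \emph{from below} rather than from above: for $\eta>0$ set $B_\eta:=B\vee\eta I$ (i.e., $\max(\eta,t)$ applied to $B$), which is invertible with bounded inverse. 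By functional calculus $(B_\eta)^{-p}=B^{-p}\wedge\eta^{-p}$, so \eqref{form-deriv} gives
$$
\|B_\eta\|_! = \|(B_\eta)^{-p}\|^{-1/p} = \|B^{-p}\wedge\eta^{-p}\|^{-1/p}.
$$
As $\eta\searrow0$ one has $\eta^{-p}\nearrow\infty$, so $\|B^{-p}\wedge\eta^{-p}\|\nearrow\|B^{-p}\|$ \emph{by the very definition} of the extended norm, and the right-hand side tends to $\|B^{-p}\|^{-1/p}$. On the other hand $B\le B_\eta\le B+\eta I$, so by monotonicity of the anti-norm (immediate from $(3)_!$) and by $\|B+\eta I\|_!=\|(B+\eta I)^{-p}\|^{-1/p}\searrow\|B\|_!$ (Definition \ref{Def-2} together with \eqref{form-deriv}), we get $\|B\|_!\le\|B_\eta\|_!\le\|B+\eta I\|_!\to\|B\|_!$, hence $\|B_\eta\|_!\to\|B\|_!$. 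Comparing the two limits yields \eqref{eq-plan-key}.

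Granting \eqref{eq-plan-key}, the three parts follow quickly. For (b): if $A\in\overline{\cN}^+$ has kernel projection $P\neq0$, then each $A\wedge s$ has the same kernel, so $(A\wedge s+\eta I)^{-p}\ge\eta^{-p}P$, whence $\|(A\wedge s+\eta I)^{-p}\|\ge\eta^{-p}\|P\|$ with $\|P\|>0$; letting $\eta\searrow0$ gives $\|A\wedge s\|_!=0$ for all $s$, so $\|A\|_!=0$. For (c): if $A\in\overline{\cN}^+$ is nonsingular, each $A\wedge s$ is a nonsingular element of $\cN^+$, so \eqref{eq-plan-key} gives $\|A\wedge s\|_!=\|(A\wedge s)^{-p}\|^{-1/p}$; since $(A\wedge s)^{-p}=A^{-p}+(s^{-p}I-A^{-p})_+$, the discrepancy $(A\wedge s)^{-p}-A^{-p}$ has operator norm $\le s^{-p}$, hence symmetric norm $\le s^{-p}\|I\|$ by \eqref{norm-ineq}, and together with $(A\wedge s)^{-p}\ge A^{-p}$ this forces $\|(A\wedge s)^{-p}\|\to\|A^{-p}\|$ in $[0,\infty]$ as $s\nearrow\infty$; therefore $\|A\|_!=\lim_s\|A\wedge s\|_!=\|A^{-p}\|^{-1/p}$. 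Finally (a) is a corollary: a singular $A$ gives $\|A\|_!=0$, and for nonsingular $A$ we have $A^{-p}\neq0$, so $\|A^{-p}\|>0$ and $\|A\|_!=\|A^{-p}\|^{-1/p}<\infty$.

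The thing to be careful about is that a symmetric norm need not be continuous under monotone limits of positive operators, so a naive passage to the limit is illegitimate in two places. In \eqref{eq-plan-key} the remedy is to regularize by $B\vee\eta I$ instead of $B+\eta I$: this converts the inverse into an honest truncation $B^{-p}\wedge\eta^{-p}$ of $B^{-p}$, along which the extended norm is continuous precisely because it is \emph{defined} as the supremum over such truncations, while the sandwich $B\le B\vee\eta I\le B+\eta I$ still pins $\|B\vee\eta I\|_!$ to $\|B\|_!$. In (c) one meets the decreasing limit $(A\wedge s)^{-p}\searrow A^{-p}$; here the escape is that the discrepancy is uniformly small \emph{in operator norm}, namely by $s^{-p}$, so by \eqref{norm-ineq} it is small in the symmetric norm even though its support need not shrink. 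Everything else is routine functional calculus together with the monotonicity statements already proved (Lemma \ref{monot}, Propositions \ref{prop-ext} and \ref{prop-ext-anti}).
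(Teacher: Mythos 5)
Your proof is correct, and parts (a) and (b) coincide with the paper's argument (the paper in fact leaves (a) to the reader; your deduction of it from (c) is fine). For part (c) the overall skeleton is also the same --- both proofs ultimately reduce to the two-sided bound $A^{-p}\le(A\wedge s)^{-p}\le A^{-p}+s^{-p}I$ together with \eqref{norm-ineq} and the monotonicity/subadditivity of the extended norm from Proposition \ref{prop-ext} --- but you resolve the delicate interchange of limits by a genuinely different device. The paper proves the unbounded analogue of \eqref{form-deriv} (namely \eqref{form-deriv-ext}) directly for every nonsingular $A\in\overline{\cN}^+$, by introducing the uniform error $\phi_s(\eps)=\sup_{x\ge0}\{x^{-p}\wedge s-(x+\eps)^{-p}\}$ and squeezing $A^{-p}\wedge s$ between $(A+\eps I)^{-p}$ and $(A+\eps I)^{-p}+\phi_s(\eps)I$. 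You instead regularize a bounded nonsingular $B$ \emph{from below} by $B\vee\eta I$, so that the inverse becomes the exact truncation $B^{-p}\wedge\eta^{-p}$, along which the extended norm is continuous by its very definition, and you pin $\|B\vee\eta I\|_!$ to $\|B\|_!$ via the sandwich $B\le B\vee\eta I\le B+\eta I$; this eliminates the $\phi_s(\eps)$ estimate altogether and is arguably cleaner. The trade-off is that your identity is first obtained only for bounded operators, so you need a second truncation step $A\wedge s$ to reach unbounded nonsingular $A$, whereas the paper gets \eqref{form-deriv-ext} for unbounded operators in one stroke. Two small points deserve an explicit line in a final write-up, though neither is in doubt: the monotonicity of $\|\cdot\|_!$ on $\cN^+$ (from $(3)_!$ applied to $B=A+(B-A)$), and the functional-calculus identity $(B\vee\eta I)^{-p}=B^{-p}\wedge\eta^{-p}$, which uses that the spectral measure of a nonsingular $B$ assigns projection $0$ to $\{0\}$.
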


\begin{proof}
The proof of (a) is easy and left to the reader.

(b)\enspace
Assume that $\ker A\ne\{0\}$, and let $P$ be the projection onto $\ker A$. For each $s>0$
and $\eps>0$ we have $(A\wedge s+\eps I)^{-p}\ge\eps^{-p}P$ and so
$\|(A\wedge s+\eps I)^{-p}\|\ge\eps^{-p}\|P\|$. By definition \eqref{form-deriv},
$$
\|A\wedge s\|_!=\lim_{\eps\searrow0}\|(A\wedge s+\eps I)^{-p}\|^{-1/p}
\le\lim_{\eps\searrow0}\eps\|P\|^{-1/p}=0.
$$
Therefore, $\|A\|_!=0$.

(c)\enspace
First we extend \eqref{form-deriv} to a nonsingular $A\in\overline{\cN}^+$. Let
$A\in\overline{\cN}^+$ with $\ker A=\{0\}$; hence $A^{-p}\in\overline{\cN}^+$. For every
$s>0$ and $\eps>0$, set
$$
\phi_s(\eps):=\sup_{x\ge0}\bigl\{x^{-p}\wedge s-(x+\eps)^{-p}\bigr\}.
$$
Then it is clear that $\phi_s(\eps)>0$ and $\lim_{\eps\searrow0}\phi_s(\eps)=0$ for each
$s>0$. Since $\|(A+\eps I)^{-p}\|$ increases as $\eps\searrow0$ by Lemma \ref{monot} and
$$
A^{-p}\wedge s\le(A+\eps I)^{-p}+\phi_s(\eps)I,
$$
we have $\|A^{-p}\wedge s\|\le\lim_{\eps\searrow0}\|(A+\eps I)^{-p}\|$. Hence
$\|A^{-p}\|\le\lim_{\eps\searrow0}\|(A+\eps I)^{-p}\|$. On the other hand, since
$(A+\eps I)^{-p}\le A^{-p}\wedge\eps^{-p}$, we have
$\|(A+\eps I)^{-p}\|\le\|A^{-p}\|$ for every $\eps>0$. Therefore,
\begin{equation}\label{form-deriv-ext}
\|A^{-p}\|^{-1/p}=\lim_{\eps\searrow0}\|(A+\eps I)^{-p}\|^{-1/p}.
\end{equation}
Now, looking at the function $x\ge0\mapsto(x\wedge s+\eps)^{-p}$, one can easily see that
$$
(A+\eps I)^{-p}\le(A\wedge s+\eps I)^{-p}\le(A+\eps I)^{-p}+s^{-p}I
$$
and hence
$$
\|(A+\eps I)^{-p}\|^{-1/p}\ge\|(A\wedge s+\eps I)^{-p}\|^{-1/p}
\ge(\|(A+\eps I)^{-p}\|+s^{-p}\|I\|)^{-1/p}.
$$
Thanks to \eqref{form-deriv-ext}, letting $\eps\searrow0$ gives
$$
\|A^{-p}\|^{-1/p}\ge\|A\wedge s\|_!\ge(\|A^{-p}\|+s^{-p}\|I\|)^{-1/p}
$$
so that $\|A\|_!=\|A^{-p}\|^{-1/p}$ follows.
\end{proof}

\section{Superadditivity for convex functions}

The aim of this section is to prove the next superadditivity theorem for a symmetric
anti-norm involving a convex function $g(t)$ on $[0,\infty)$. Note that a non-negative
convex function $g(t)$ on $[0,\infty)$ is superadditive if and only if $g(0)=0$. Also,
note that the assumption on $g(t)$ in the theorem is best possible; indeed, the assumption
is necessary even for the classical Rotfel'd trace inequality for matrices.

\vskip 5pt\noindent
\begin{theorem}\label{theorem-1}
Let $A,B\in\overline{\cN}^+$ and let $g(t)$ be a non-negative convex function on $[0,\infty)$
with $g(0)=0$. Then, for all symmetric anti-norms on $\cN^+$,
$$
\|g(A+B)\|_!\ge\|g(A)\|_!+\|g(B)\|_!.
$$
\end{theorem}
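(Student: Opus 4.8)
The plan is to deduce the inequality from the operator-inequality machinery of Section 3 combined with the extension results of Section 4. First I would apply Theorem \ref{th-super}: since $g$ is non-negative convex on $[0,\infty)$ with $g(0)=0$, for every $\eps>0$ there exist unitaries $U,V\in\cN$ with
$$
g(A+B)+\eps I\ge Ug(A)U^*+Vg(B)V^*.
$$
Both sides here are operators in $\overline{\cN}^+$ (note $g(A),g(B),g(A+B)\in\overline{\cN}^+$ because $g$ is monotone for large $t$, being convex with $g(0)=0$), so I may feed this into an arbitrary symmetric anti-norm $\|\cdot\|_!$, now understood in its extension to $\overline{\cN}^+$ furnished by Proposition \ref{prop-ext-anti}. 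The monotonicity of $\|\cdot\|_!$ on $\overline{\cN}^+$ — which I would record as an immediate consequence of the superadditivity $(3)_!$, since $A\le B$ gives $B=A+(B-A)$ hence $\|B\|_!\ge\|A\|_!+\|B-A\|_!\ge\|A\|_!$ — together with unitary invariance $(2)_!$ and superadditivity $(3)_!$ then yields
$$
\|g(A+B)+\eps I\|_!\ge\|Ug(A)U^*+Vg(B)V^*\|_!\ge\|g(A)\|_!+\|g(B)\|_!.
$$

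The remaining task is to pass to the limit $\eps\searrow0$ on the left-hand side, i.e.\ to show $\|g(A+B)+\eps I\|_!\searrow\|g(A+B)\|_!$. For $\|\cdot\|_!$ defined on $\cN^+$ this is exactly property $(4)_!$; the only subtlety is that $g(A+B)$ may be unbounded, so I need the analogue of $(4)_!$ for the extension to $\overline{\cN}^+$. I would establish this by a truncation argument: fix $C:=g(A+B)\in\overline{\cN}^+$ and $s>0$, and note that $(C+\eps I)\wedge s \le C\wedge s + \eps I$ (comparing the functions $x\mapsto(x+\eps)\wedge s$ and $x\mapsto x\wedge s+\eps$), while trivially $C\wedge s\le (C+\eps I)\wedge s$; applying $(1)_!$--$(3)_!$ and the already-noted monotonicity on $\cN^+$ gives $\|C\wedge s\|_!\le\|(C+\eps I)\wedge s\|_!\le\|C\wedge s\|_!+\eps\|I\|_!$, so $\|(C+\eps I)\wedge s\|_!\to\|C\wedge s\|_!$ as $\eps\searrow0$, uniformly in $s$. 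Since $\|(C+\eps I)\wedge s\|_!\nearrow\|C+\eps I\|_!$ and $\|C\wedge s\|_!\nearrow\|C\|_!$ as $s\nearrow\infty$, a standard interchange-of-limits argument (using that $\|(C+\eps I)\wedge s\|_!$ is monotone in each variable and the $\eps$-convergence is uniform in $s$) yields $\|C+\eps I\|_!\to\|C\|_!$. Combining this with the displayed estimate finishes the proof.

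The main obstacle is precisely this last limiting step in the unbounded setting: one must make sure the continuity property $(4)_!$, stated only on $\cN^+$, survives the extension of $\|\cdot\|_!$ to $\overline{\cN}^+$, and this requires the uniform-in-$s$ control of $\|(C+\eps I)\wedge s\|_!-\|C\wedge s\|_!$ sketched above rather than a naive appeal to $(4)_!$. Everything else — the production of the unitaries, the use of unitary invariance and superadditivity — is routine once Theorem \ref{th-super} and Proposition \ref{prop-ext-anti} are in hand. One should also remark that Remark \ref{rem-1} shows the superadditive direction genuinely needs the operator inequality of Theorem \ref{th-super}: it cannot be obtained by a naive componentwise estimate after a direct-integral decomposition.
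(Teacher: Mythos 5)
Your first paragraph (producing the unitaries via Theorem \ref{th-super}, then using monotonicity, $(2)_!$ and $(3)_!$ of the extension from Proposition \ref{prop-ext-anti}) is fine, but the limiting step $\eps\searrow0$ contains a genuine gap, and it is exactly the one the paper flags in the sentence preceding its proof. The statement you are trying to establish --- $\|C+\eps I\|_!\searrow\|C\|_!$ for $C\in\overline{\cN}^+$ --- is \emph{false} for the extended anti-norm: take $\|\cdot\|_!=\Delta$, the Fuglede--Kadison determinant on a diffuse $\cM$, and $C\in\overline{\cM}^+$ with kernel projection of trace $1/2$ and $\int_0^{1/2}\log\lambda_t(C)\,dt=+\infty$; then $\Delta(C\wedge s)=0$ for every $s$, so $\|C\|_!=0$, while $\Delta((C+\eps I)\wedge s)=\sqrt{\eps}\,\exp\bigl(\int_0^{1/2}\log((\lambda_t(C)+\eps)\wedge s)\,dt\bigr)\to\infty$ as $s\nearrow\infty$, so $\|C+\eps I\|_!=\infty$ for every $\eps>0$. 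Your attempted proof of the continuity breaks at the inequality $\|(C+\eps I)\wedge s\|_!\le\|C\wedge s\|_!+\eps\|I\|_!$: after using monotonicity to pass to $\|C\wedge s+\eps I\|_!$, you need \emph{sub}additivity of $\|\cdot\|_!$, whereas $(3)_!$ gives precisely the reverse inequality and no upper bound of this form holds (in the example above, $\Delta(C\wedge s+\eps I)$ is of order $\sqrt{\eps s}$ while the claimed bound is $\eps$). Consequently the convergence is not uniform in $s$ and the interchange of limits collapses.

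The paper circumvents this by truncating $A$ and $B$ \emph{before} invoking any operator inequality, rather than truncating $g(A+B)$ afterwards. It first shows $\lambda_t(g((A+B)\wedge 2s))\ge\lambda_t(g(A\wedge s+B\wedge s))$, converts this to $g((A+B)\wedge 2s)+\eps I\ge Wg(A\wedge s+B\wedge s)W^*$ via Lemma \ref{lemma-sp-dominance} and the measurable selection method, and then applies Theorem \ref{th-super} to the \emph{bounded} operators $A\wedge s$, $B\wedge s$. At that point every operator involved lies in $\cN^+$, so $(4)_!$ applies legitimately to remove $\eps$, yielding $\|g((A+B)\wedge 2s)\|_!\ge\|g(A\wedge s)\|_!+\|g(B\wedge s)\|_!$; the proof concludes by letting $s\nearrow\infty$ using $\|g(A)\|_!=\lim_{s\nearrow\infty}\|g(A)\wedge s\|_!=\lim_{s\nearrow\infty}\|g(A\wedge s)\|_!$, which is the definition of the extension rather than any continuity under perturbation by $\eps I$. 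Your argument needs to be restructured along these lines; as written, the unbounded case is not proved.
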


\vskip 5pt
Here, and in the whole sequel, we consider that symmetric  anti-norms on $\cN^+$ are
automatically defined on the full cone $\overline\cN^+$ as in Proposition
\ref{prop-ext-anti}, and similarly symmetric norms on $\cN$ are defined on the whole
$\overline{\cN}$ as in Proposition \ref{prop-ext}.

Theorem \ref{theorem-1} claims a numerical inequality; however, its proof relies on an 
operator inequality presented in Section 3. Indeed, when $A,B\in\cN^+$, it is a
straightforward consequence of Theorem \ref{th-super}. In the unbounded case, we cannot
argue by letting $\eps\searrow 0$ in Theorem \ref{th-super} because
$(4)_!$ may not hold in $\overline{\cN}^+$; for instance, when $\|\cdot\|_!$ is the Fuglede-Kadison determinant (see the end of Section 6), there is an $A\in\overline{\cN}^+$ which has the non-trivial kernel (hence $\|A\|_!=0$) but satisfies $\| A+\eps I\|_!=\infty$ for all $\eps>0$.

\begin{proof}
Let $A,B\in\overline{\cN}^+$ and $s>0$ be arbitrary. Since $g(t)$ is continuous and
non-decreasing on $[0,\infty)$, we have thanks to \cite[Lemma 2.5\,(iv)]{FK}
\begin{align*}
\lambda_t(g((A+B)\wedge2s))&=g(\lambda_t(A+B)\wedge2s) \\
&\ge g(\lambda_t(A\wedge s+B\wedge s)) \\
&=\lambda_t(g(A\wedge s+B\wedge s)),
\qquad t\in(0,\tau(I)).
\end{align*}
Given $\eps>0$, when $\cN$ is a finite factor, Lemma \ref{lemma-sp-dominance} then entails
$$
g((A+B)\wedge2s)+\eps I\ge Wg(A\wedge s+B\wedge s)W^*
$$
for some unitary $W\in\cN$. This inequality can be extended to the non-factor case, by
using the measurable selection method under the central direct decomposition as in
Section 3, while full details are left to the reader. Hence, Theorem \ref{th-super} applied
to $A\wedge s$, $B\wedge s$ in place of $A$, $B$ shows that
$$
g((A+B)\wedge2s)+2\eps I\ge Ug(A\wedge s)U^*+Vg(B\wedge s)V^*
$$
for some unitaries $U,V\in\cN$. Therefore,
$$
\|g((A+B)\wedge2s)+2\eps I\|_!\ge\|g(A\wedge s)\|_!+\|g(B\wedge s)\|_!
$$
so that $\|g((A+B)\wedge2s)\|_!\ge\|g(A\wedge s)\|_!+\|g(B\wedge s)\|_!$. Since a simple
estimation gives
$$
\|g(A)\|_!=\lim_{s\nearrow\infty}\|g(A)\wedge s\|_!
=\lim_{s\nearrow\infty}\|g(A\wedge s)\|_!,
$$
the claimed inequality follows.
\end{proof}

In the rest of the section we collect a few special illustrations of Theorem \ref{theorem-1}.

\vskip 5pt\noindent
\begin{cor}\label{new-cor} Let $A,B\in\overline{\cN}^+$ be nonsingular and $p_1,\dots,p_m$ be positive scalars such that $\sum_{i=1}^m p_i\ge 1$. Then, for all symmetric norms on $\cN$,
$$
\prod_{i=1}^m \left\| (A+B)^{-p_i}\right\|^{-1}\ge \prod_{i=1}^m  \left\|  A^{-p_i}\right\|^{-1}+  \prod_{i=1}^m  \left\| B^{-p_i}\right\|^{-1}.
$$
\end{cor}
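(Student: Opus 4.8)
The plan is to reduce this corollary to Theorem~\ref{theorem-1} by an appropriate choice of symmetric anti-norm built from the given symmetric norm. Observe that for a fixed symmetric norm $\|\cdot\|$ on $\cN$, each exponent $p_i>0$ gives, via Definition~\ref{Def-2}, a derived anti-norm $A\mapsto\|A\|_!^{(i)}:=\|A^{-p_i}\|^{-1/p_i}$ on $\overline{\cN}^+$ (using Proposition~\ref{prop-deriv}(c) for the nonsingular case). Raising to the power $p_i\in(0,\infty)$ and forming the product, one checks that
$$
\Phi(A):=\prod_{i=1}^m\bigl(\|A\|_!^{(i)}\bigr)^{p_i}=\prod_{i=1}^m\|A^{-p_i}\|^{-1}
$$
is homogeneous of degree $\sum_{i=1}^m p_i$. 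The idea is to raise $\Phi$ to the power $1/\sum p_i$ to obtain a genuinely homogeneous (degree one) functional and verify it is a symmetric anti-norm.

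First I would record that a product of the form $\prod_i \Psi_i(A)^{p_i}$, where each $\Psi_i$ is a symmetric anti-norm on $\overline{\cN}^+$ and $\sum_i p_i=1$, is again a symmetric anti-norm. Homogeneity and unitary invariance are immediate; the superadditivity $\prod_i\Psi_i(A+B)^{p_i}\ge\prod_i\Psi_i(A)^{p_i}+\prod_i\Psi_i(B)^{p_i}$ follows from the superadditivity of each $\Psi_i$ together with the superadditivity of the weighted geometric mean, i.e. from
$$
\prod_i(x_i+y_i)^{p_i}\ge\prod_i x_i^{p_i}+\prod_i y_i^{p_i},\qquad x_i,y_i\ge0,\ \textstyle\sum_i p_i=1,
$$
which is the classical consequence of the arithmetic–geometric mean inequality applied to $x_i/(x_i+y_i)$ and $y_i/(x_i+y_i)$. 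Property $(4)_!$ passes to the product as well. Thus, setting $q:=\sum_{i=1}^m p_i\ge1$ and $\widetilde p_i:=p_i/q$, the functional
$$
\Psi(A):=\prod_{i=1}^m\bigl(\|A^{-p_i}\|^{-1/p_i}\bigr)^{\widetilde p_i}=\Phi(A)^{1/q}
$$
is a symmetric anti-norm on $\overline{\cN}^+$.

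Next I would apply Theorem~\ref{theorem-1} with the convex function $g(t)=t^q$ on $[0,\infty)$, which is non-negative, convex (since $q\ge1$), and satisfies $g(0)=0$. This gives $\|\,(A+B)^q\,\|_!\ge\|A^q\|_!+\|B^q\|_!$ for the anti-norm $\|\cdot\|_!=\Psi$. Now I would compute $\Psi(C^q)$ for nonsingular $C\in\overline{\cN}^+$: since $(C^q)^{-p_i}=C^{-qp_i}$, we get $\|(C^q)^{-p_i}\|^{-1/p_i}=\|C^{-qp_i}\|^{-1/p_i}$, and thus
$$
\Psi(C^q)=\prod_{i=1}^m\|C^{-qp_i}\|^{-\widetilde p_i/p_i}=\Bigl(\prod_{i=1}^m\|C^{-qp_i}\|^{-1}\Bigr)^{1/q}.
$$
Hmm — this produces exponents $qp_i$ rather than $p_i$. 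So I would instead either (i) replace $p_i$ by $p_i/q$ throughout the definition of $\Psi$ from the start, so that $\Psi(C^q)$ recovers exponents $p_i$; or, more cleanly, (ii) apply Theorem~\ref{theorem-1} directly to $g(t)=t$ — wait, $g(t)=t$ forces nothing. The right move is option (i): define $\Psi$ using exponents $p_i/q$ so that each factor is $\|A^{-p_i/q}\|^{-q/p_i}$ raised to $p_i/q$, giving $\Psi(A)=\bigl(\prod_i\|A^{-p_i/q}\|^{-1}\bigr)^{1/q}$; then $\Psi(C^q)=\bigl(\prod_i\|C^{-p_i}\|^{-1}\bigr)^{1/q}$, exactly the $q$-th root of the quantity in the corollary. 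Applying the superadditivity of $\Psi$ under $g(t)=t^q$ to $A,B$ yields
$$
\Bigl(\prod_{i=1}^m\|(A+B)^{-p_i}\|^{-1}\Bigr)^{1/q}\ge\Bigl(\prod_{i=1}^m\|A^{-p_i}\|^{-1}\Bigr)^{1/q}+\Bigl(\prod_{i=1}^m\|B^{-p_i}\|^{-1}\Bigr)^{1/q}.
$$
Raising both sides to the power $q\ge1$ — using that $(u+v)^q\ge u^q+v^q$ for $u,v\ge0$ and $q\ge1$ — gives exactly the claimed inequality.

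The main obstacle is purely bookkeeping: getting the exponents matched so that the convex function $t^q$ with $q=\sum p_i$ interacts correctly with the anti-norm built from the exponents $p_i/q$, and then correctly using the elementary superadditivity facts $(u+v)^q\ge u^q+v^q$ (for $q\ge1$) and the weighted-AM-GM superadditivity to close the argument. A secondary point worth a sentence is the verification that the weighted geometric mean of symmetric anti-norms is again a symmetric anti-norm, in particular that property $(4)_!$ and superadditivity survive; this is the only place any genuine (if routine) inequality is invoked, and it is where I would be most careful. Everything else — homogeneity, unitary invariance, the passage through Proposition~\ref{prop-deriv}(c) for nonsingularity — is formal.
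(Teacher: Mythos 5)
Your argument is correct, and it rests on exactly the same two ingredients as the paper's proof: Theorem \ref{theorem-1} applied to derived anti-norms via a power function, and the superadditivity of the weighted geometric mean, $\prod_i(x_i+y_i)^{q_i}\ge\prod_i x_i^{q_i}+\prod_i y_i^{q_i}$ for non-negative weights summing to $1$. The difference is only in the order of assembly. The paper applies Theorem \ref{theorem-1} $m$ times, once to each derived anti-norm $A\mapsto\|A^{-q_i}\|^{-1/q_i}$ with the single convex function $g(t)=t^{p}$, $p=\sum_i p_i$, $q_i=p_i/p$, and then combines the resulting $m$ scalar inequalities by the weighted AM--GM step, which lands directly on the claimed form. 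You combine first: you check that the weighted geometric mean of symmetric anti-norms (weights summing to $1$) is itself a symmetric anti-norm --- the superadditivity of that product being precisely the same AM--GM inequality --- and then invoke Theorem \ref{theorem-1} once, with $g(t)=t^{q}$, $q=\sum_i p_i$, finishing with the elementary $(u+v)^q\ge u^q+v^q$. That last step is harmless but is not needed in the paper's ordering. Two small points you should make explicit if you write this up: (i) the identification of your $\Psi$ on nonsingular unbounded operators goes through Proposition \ref{prop-deriv}(c), as you note; and (ii) to apply Theorem \ref{theorem-1} to $\Psi$ on $\overline{\cN}^+$ you need the extension of $\Psi$ by truncation ($\lim_{s\nearrow\infty}\Psi(A\wedge s)$) to agree with the product of the extensions of the individual derived anti-norms --- this holds because each factor is finite on $\overline{\cN}^+$ by Proposition \ref{prop-deriv}(a) and increases monotonically under $s\nearrow\infty$, so the limit passes through the finite product.
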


\vskip 5pt\noindent
\begin{proof} Let $g_i(t)$ be strictly increasing convex functions on $[0,\infty)$ with
$g_i(t)=0$, and let $q_i>0$, $1\le i\le m$. Theorem \ref{theorem-1} applied to the derived
anti-norms $A\mapsto\| A^{-q_i}\|^{-1/q_i}$ yields, in view of Proposition
\ref{prop-deriv}\,(c),

$$
\left\| g_i^{-q_i}(A+B)\right\|^{-1/q_i}
\ge \left\| g_i^{-q_i}(A)\right\|^{-1/q_i} + \left\| g_i^{-q_i}(B)\right\|^{-1/q_i},
\quad 1\le i\le m.
$$
Now, assume that $\sum_{i=1}^m q_i=1$.
By  the elementary inequality from the concavity of the weighted geometric mean,
$$
\prod_{i=1}^m (a_i+b_i)^{q_i}\ge \prod_{i=1}^m  a_i^{q_i}+  \prod_{i=1}^m  b_i^{q_i}
$$
with $a_i= \left\| g_i^{-q_i}(A)\right\|^{-1/q_i}$ and
$b_i=\left\| g_i^{-q_i}(B)\right\|^{-1/q_i}$, we have
\begin{equation}\label{new-eq2}
\prod_{i=1}^m \left\| g_i^{-q_i}(A+B)\right\|^{-1} \ge
\prod_{i=1}^m  \left\| g_i^{-q_i}(A)\right\|^{-1} + \prod_{i=1}^m \left\|  g_i^{-q_i}(B)\right\|^{-1}. 
\end{equation}
Let $p=\sum_{i=1}^m p_i$. Take in \eqref{new-eq2} $g_i(t)=t^{p}$ and $q_i=p_i/p$, $1\le i\le m$, to obtain the required estimate. \end{proof}

\vskip 5pt\noindent
\begin{cor}\label{new-cor2} Let $A,B\in\overline{\cN}^+$ be nonsingular. Then, for all 
symmetric norms on $\cN$, and $m=1,2,\ldots$,
$$
\left\| \sum_{k=1}^m(A+B)^{-k} \right\|^{-1} \ge
\left\| \sum_{k=1}^m A^{-k} \right\|^{-1} +  \left\| \sum_{k=1}^m B^{-k} \right\|^{-1}.
$$
\end{cor}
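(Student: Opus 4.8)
The plan is to derive this as a direct specialization of Corollary \ref{new-cor}. The key observation is that the functional $A\mapsto\bigl\|\sum_{k=1}^m A^{-k}\bigr\|^{-1}$, for $A\in\overline{\cN}^+$ nonsingular, can be related to a product of the simpler functionals $A\mapsto\|A^{-k}\|^{-1}$ appearing in Corollary \ref{new-cor}. First I would fix the symmetric norm $\|\cdot\|$ on $\cN$ and recall from Corollary \ref{Qnorm} (applied iteratively, or more directly) that the relevant objects in Corollary \ref{new-cor} are already symmetric norms, so that no extra work is needed to invoke it. The natural choice in Corollary \ref{new-cor} is to take $p_i$ running over $1,2,\ldots,m$, i.e. $p_i=i$ for $1\le i\le m$, since then $\sum_{i=1}^m p_i = m(m+1)/2\ge 1$ and the hypothesis of Corollary \ref{new-cor} is satisfied for every $m\ge 1$.

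The remaining point is to pass from the product $\prod_{i=1}^m\|(A+B)^{-i}\|^{-1}$ to the single term $\bigl\|\sum_{k=1}^m(A+B)^{-k}\bigr\|^{-1}$. Here I would argue as follows. One direction of comparison is elementary: by the triangle inequality for the symmetric norm $\|\cdot\|$, $\bigl\|\sum_{k=1}^m A^{-k}\bigr\|\le\sum_{k=1}^m\|A^{-k}\|$, so $\bigl\|\sum_{k=1}^m A^{-k}\bigr\|^{-1}\ge\bigl(\sum_{k=1}^m\|A^{-k}\|\bigr)^{-1}$; but this is the wrong direction for a superadditivity conclusion, so instead I would invoke Theorem \ref{theorem-1} directly. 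Indeed, the functional $\|\cdot\|_!$ on $\cN^+$ defined by $\|A\|_!:=\bigl\|\sum_{k=1}^m A^{-k}\bigr\|^{-1}$ (with the usual limit extension to noninvertible and then to unbounded operators) is a derived-type anti-norm: the map $A\mapsto\sum_{k=1}^m A^{-k}$ is of the form $A\mapsto h(A^{-1})$ for the polynomial $h(x)=\sum_{k=1}^m x^k$, which is nonnegative, increasing and convex on $[0,\infty)$, and one checks from operator convexity of $t\mapsto t^{-1}$ together with Corollary \ref{Qnorm} (giving the symmetric norm $X\mapsto\|X^*X\|^{1/2}$) that $A\mapsto\bigl\|\sum_{k=1}^m A^{-k}\bigr\|^{-1}$ is homogeneous, unitarily invariant, and superadditive on the invertible cone. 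Once this is established on invertible operators, the limit formula $\|A\|_!=\lim_{\eps\searrow0}\|A+\eps I\|_!$ and Proposition \ref{prop-ext-anti} promote it to a symmetric anti-norm on all of $\overline{\cN}^+$, and then Theorem \ref{theorem-1} applied with $g(t)=t$ (the identity, which is convex with $g(0)=0$) gives exactly the claimed inequality.

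The main obstacle I anticipate is verifying the superadditivity of $A\mapsto\bigl\|\sum_{k=1}^m A^{-k}\bigr\|^{-1}$ on invertible operators — i.e. that this functional really is an anti-norm. The clean route is to mimic the proof of Theorem \ref{theorem-deriv}: normalize so that $\bigl\|\sum_k A^{-k}\bigr\|=\bigl\|\sum_k B^{-k}\bigr\|=1$, and for $0<s<1$ estimate $\bigl\|\sum_k(sA+(1-s)B)^{-k}\bigr\|$. For each fixed $k$, operator convexity of $t\mapsto t^{-k}$ on $(0,\infty)$ is \emph{not} available for $k\ge 2$, so one cannot simply write $(sA+(1-s)B)^{-k}\le sA^{-k}+(1-s)B^{-k}$; this is precisely where the block-matrix Cauchy–Schwarz machinery (Corollary \ref{Schwarz} and Corollary \ref{Qnorm}) must be brought in, exactly as in the $p=2^nq$ reduction of Theorem \ref{theorem-deriv}, to handle the higher powers. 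An alternative, and perhaps cleaner, strategy that sidesteps this entirely is to \emph{not} repackage the sum as a single anti-norm, and instead combine the $m$ inequalities of Corollary \ref{new-cor} (one for each $p_i=i$) with a weighted-geometric-mean step as in the proof of that corollary — but since the target here is the norm of a \emph{sum} rather than a \emph{product}, that route would require an extra comparison between $\bigl\|\sum_k X^{-k}\bigr\|$ and $\prod_k\|X^{-k}\|^{1/m}$ that does not hold in general. I therefore expect the intended and correct argument to be the Theorem \ref{theorem-1} route with $g=\mathrm{id}$ and the anti-norm $\|A\|_!=\bigl\|\sum_{k=1}^m A^{-k}\bigr\|^{-1}$, with the anti-norm verification being the one genuinely substantive step.
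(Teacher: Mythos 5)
There is a genuine gap, and it is precisely at the step you yourself flag as ``the one genuinely substantive step.'' Your plan is to set $\|A\|_!:=\bigl\|\sum_{k=1}^m A^{-k}\bigr\|^{-1}$, verify that this is a symmetric anti-norm, and then apply Theorem \ref{theorem-1} with $g(t)=t$. But with $g=\mathrm{id}$, Theorem \ref{theorem-1} returns nothing beyond property $(3)_!$ of the anti-norm, i.e.\ $\bigl\|\sum_k(A+B)^{-k}\bigr\|^{-1}\ge\bigl\|\sum_k A^{-k}\bigr\|^{-1}+\bigl\|\sum_k B^{-k}\bigr\|^{-1}$ --- which \emph{is} the corollary. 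So the ``anti-norm verification'' you defer is not an auxiliary lemma; it is equivalent to the statement to be proven, and your proposal is circular unless that verification is actually carried out. Moreover, the tools you point to do not close it: the $p=2^nq$ reduction in Theorem \ref{theorem-deriv} (via Corollary \ref{Qnorm} and Lemma \ref{lem-aux1}) handles a \emph{single} power $t^{-p}$, and there is no evident way to adapt the squaring/Cauchy--Schwarz mechanism to the sum $\sum_{k=1}^m t^{-k}$ of powers with different exponents. Your other idea, Corollary \ref{new-cor} with $p_i=i$, you correctly discard since it controls a product rather than a sum.

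The missing idea is an algebraic one: observe that
\[
\Bigl(\sum_{k=1}^m t^{-k}\Bigr)^{-1}=\frac{t^m}{1+t+\cdots+t^{m-1}}=:g(t),
\]
so that $\sum_{k=1}^m A^{-k}=g(A)^{-1}$ for nonsingular $A$. The paper then checks by a direct (and not entirely trivial) computation of $g''$ that $g$ is a non-negative convex function on $[0,\infty)$ with $g(0)=0$, and applies Theorem \ref{theorem-1} to this $g$ together with the \emph{known} derived anti-norm $A\mapsto\|A^{-1}\|^{-1}$ (whose superadditivity was already established in Theorem \ref{theorem-deriv} for $p=1$ using operator convexity of $t\mapsto t^{-1}$). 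This yields $\|g(A+B)^{-1}\|^{-1}\ge\|g(A)^{-1}\|^{-1}+\|g(B)^{-1}\|^{-1}$, which is exactly the claimed inequality. In short: the nontrivial anti-norm is the simple one $\|A^{-1}\|^{-1}$, and all the combinatorial content of the sum $\sum_k A^{-k}$ is absorbed into the scalar convexity of $g$ --- a step your proposal never identifies.
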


\vskip 5pt\noindent
\begin{proof} Let $m\ge2$ and let 
$$
g(t):=\frac{t^m}{1+t+\cdots+t^{m-1}}={t^{m+1}-t^m\over t^m-1}.
$$
Then $g(0)=0$ and for $t>0$,
$$
g''(t)={mt^{m-2}\bigl\{(m-1)t^{m+1}-(m+1)t^m+(m+1)t-(m-1)\bigr\}\over(t^m-1)^3}.
$$
For $\phi(t):=(m-1)t^{m+1}-(m+1)t^m+(m+1)t-(m-1)$ compute
\begin{align*}
\phi'(t)&=(m+1)\bigl\{(m-1)t^m-mt^{m-1}+1\bigr\}, \\
\phi''(t)&=(m+1)m(m-1)t^{m-2}(t-1).
\end{align*}
Since $\phi(1)=\phi'(1)=\phi''(1)=0$, we see that $\phi(t)\le0$ for $0\le t\le1$ and
$\phi(t)\ge0$ for $t\ge1$. Hence $g''(t)\ge0$ for all $t>0$, so $g(t)$ is convex on
$(0,\infty)$. 
Note that $g(t)=\bigl(\sum_{k=1}^m t^{-k}\bigr)^{-1}$ for $t>0$.
So, applying Theorem \ref{theorem-1} to this function $g(t)$ and  the derived anti-norm
$A\mapsto \| A^{-1}\|^{-1}$ proves the corollary.
\end{proof}

\vskip 5pt
The next corollary involves an anti-norm specific to the matrix algebra $\bM_n$. Here
$\wedge^m A$ denotes the $m$th antisymmetric tensor power of a matrix $A$ (see
\cite{Bhatia}).

\vskip 5pt\noindent
\begin{cor} Let $A,B\in\bM_n^+$ be nonsingular. Let $g:[0,\infty)\to[0,\infty)$ be a
strictly increasing convex function with $g(0)=0$. Then, for all $0<q\le 1$ and all  
$m=1,2,\ldots$,
$$
\biggl\{\frac{{\mathrm{Tr\,}}\wedge^{m} g^q(A+B)}{{\mathrm{Tr\,}}\wedge^{m-1} g^q(A+B)}
\biggr\}^{1/q} \ge
\biggl\{\frac{{\mathrm{Tr\,}}\wedge^{m} g^q(A)}{{\mathrm{Tr\,}}\wedge^{m-1} g^q(A)}
\biggr\}^{1/q} +
\biggl\{\frac{{\mathrm{Tr\,}}\wedge^{m} g^q(B)}{{\mathrm{Tr\,}}\wedge^{m-1} g^q(B)}
\biggr\}^{1/q}.
$$
\end{cor}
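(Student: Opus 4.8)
The plan is to realize the left-hand quotient as a derived anti-norm on $\bM_n^+$ and then invoke Theorem~\ref{theorem-1}. The key observation is that the functional
$$
A\mapsto\frac{{\mathrm{Tr\,}}\wedge^m A}{{\mathrm{Tr\,}}\wedge^{m-1}A}
$$
on the nonsingular part of $\bM_n^+$ is, up to taking $p=1$, a derived anti-norm for a suitable symmetric norm. Indeed, the first thing I would do is verify that $X\mapsto\|X\|_{(m)}:={\mathrm{Tr\,}}\wedge^m|X|/{\mathrm{Tr\,}}\wedge^{m-1}|X|$ is itself a symmetric norm on $\bM_n$; equivalently, that its restriction to $\bM_n^+$ satisfies conditions (1)--(3) of Section~2. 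Homogeneity and unitary invariance are clear from the multiplicativity of $\wedge^m$ under $\wedge^m(UAU^*)=(\wedge^mU)(\wedge^mA)(\wedge^mU)^*$; the content is the triangle inequality, which should follow from a known fact (the ratio of consecutive elementary symmetric polynomials of the eigenvalues is concave, or equivalently superadditive, on the positive cone -- this is a classical result essentially due to the Alexandrov--Fenchel type inequalities, also appearing in the matrix literature on ${\mathrm{Tr\,}}\wedge^m$). Actually it is cleaner to recall that ${\mathrm{Tr\,}}\wedge^m$ itself gives a symmetric norm-like quantity only for the top exponent, so the more robust route is to note that the \emph{reciprocal} $A\mapsto{\mathrm{Tr\,}}\wedge^{m-1}(A^{-1})/{\mathrm{Tr\,}}\wedge^m(A^{-1})$ on invertible $A$ equals $\|A^{-1}\|_{(m)}^{-1}$ after checking $\wedge^k(A^{-1})=(\wedge^kA)^{-1}$ and a determinant/cofactor identity relating ${\mathrm{Tr\,}}\wedge^k(A^{-1})$ to ${\mathrm{Tr\,}}\wedge^{n-k}A$.

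Once $\|\cdot\|_{(m)}$ is established as a symmetric norm on $\bM_n$, the plan proceeds as follows. By Definition~\ref{Def-2} with this norm and the exponent $p=1$, the derived anti-norm is $A\mapsto\|A^{-1}\|_{(m)}^{-1}$, and by Proposition~\ref{prop-deriv}(c) this equals ${\mathrm{Tr\,}}\wedge^m A/{\mathrm{Tr\,}}\wedge^{m-1}A$ on the nonsingular cone (using $\wedge^k(A^{-1})=(\wedge^kA)^{-1}$ and $\|(\wedge^kA)^{-1}\|^{-1}$ rewritten via the cofactor identity). Next, apply Lemma~\ref{lem-aux1} with $0<q\le 1$: if $q<1$ this produces the new anti-norm $A\mapsto(\|A^q\|_!)^{1/q}$, and for $q=1$ nothing is needed. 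The resulting functional is $A\mapsto\{{\mathrm{Tr\,}}\wedge^m(A^q)/{\mathrm{Tr\,}}\wedge^{m-1}(A^q)\}^{1/q}$. Finally, apply Theorem~\ref{theorem-1} to this symmetric anti-norm together with the convex function $g$ satisfying $g(0)=0$: writing $g^q$ for the anti-norm's internal power is not right -- rather, we feed $g$ into Theorem~\ref{theorem-1} and the $q$-power lives in the anti-norm, so the statement becomes $\|g(A+B)\|_!\ge\|g(A)\|_!+\|g(B)\|_!$ with $\|\cdot\|_!$ the above functional, which is exactly the displayed inequality after unwinding notation (note $g^q(A)=(g(A))^q$ since $g(A)$ is defined by functional calculus and $g\ge 0$).

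I expect the main obstacle to be Step~1: showing that $A\mapsto{\mathrm{Tr\,}}\wedge^m|A|/{\mathrm{Tr\,}}\wedge^{m-1}|A|$ is a genuine symmetric norm, i.e., establishing its triangle inequality (superadditivity of $A\mapsto{\mathrm{Tr\,}}\wedge^m A/{\mathrm{Tr\,}}\wedge^{m-1}A$ being automatic from norm subadditivity of the reciprocal, so really one needs concavity of $A\mapsto{\mathrm{Tr\,}}\wedge^m A/{\mathrm{Tr\,}}\wedge^{m-1}A$ on $\bM_n^+$). This is where a citation to the theory of mixed discriminants / hyperbolic polynomials, or to a direct argument via the integral representation $e_m(\lambda)/e_{m-1}(\lambda)$ as a concave function, would be needed; everything after that is bookkeeping with $\wedge^k$ and the cofactor identities $\wedge^k(A^{-1})=(\wedge^kA)^{-1}$ and $\det(A)\,\wedge^{n-k}(A^{-1})\cong\wedge^kA$. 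One should also double-check the $\eps$-limit issues in Proposition~\ref{prop-deriv}(c) are vacuous here since we work on the nonsingular cone of $\bM_n$, so no extension to $\overline\cN^+$ is actually invoked.
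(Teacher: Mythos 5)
Your overall architecture --- identify the Marcus--Lopes quotient as a symmetric anti-norm, upgrade it with Lemma~\ref{lem-aux1} to handle the exponent $q$, and then feed the convex function $g$ into Theorem~\ref{theorem-1} --- is exactly the paper's route, and the final bookkeeping (including the observation that the $q$-power lives inside the anti-norm while $g$ is the function in Theorem~\ref{theorem-1}) is correct. The genuine gap is your Step~1: the functional $\Phi(A):={\mathrm{Tr\,}}\wedge^m A/{\mathrm{Tr\,}}\wedge^{m-1}A$, i.e.\ the ratio $e_m(\lambda)/e_{m-1}(\lambda)$ of consecutive elementary symmetric polynomials, is \emph{concave and superadditive} on $\bM_n^+$ (this is precisely the Marcus--Lopes theorem \cite{ML}), hence it is emphatically \emph{not} subadditive and cannot be a symmetric norm. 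For instance with $n=m=2$, $A=\diag(1,\eps)$, $B=\diag(\eps,1)$ one has $\Phi(A)=\Phi(B)=\eps/(1+\eps)$ while $\Phi(A+B)=(1+\eps)/2$, so the triangle inequality fails badly. Your fallback --- passing to $A\mapsto{\mathrm{Tr\,}}\wedge^{m-1}(A^{-1})/{\mathrm{Tr\,}}\wedge^m(A^{-1})$ --- does not rescue this: by the identity $e_k(1/\lambda)=e_{n-k}(\lambda)/e_n(\lambda)$ that quantity is again a ratio $e_{n-m+1}(\lambda)/e_{n-m}(\lambda)$ of consecutive elementary symmetric functions, so it is again superadditive and not a norm. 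A second, independent obstruction: by Proposition~\ref{prop-deriv}\,(b) every derived anti-norm vanishes on singular positive operators, whereas $\Phi$ does not vanish on singular matrices of rank $\ge m$ when $m<n$; so $\Phi$ simply is not a derived anti-norm, and no choice of symmetric norm and $p$ in Definition~\ref{Def-2} can produce it.

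The good news is that the derived-anti-norm scaffolding is unnecessary. Theorem~\ref{theorem-1} and Lemma~\ref{lem-aux1} are stated for \emph{all} symmetric anti-norms, not only derived ones, so all you need is that $\Phi$ satisfies $(1)_!$--$(4)_!$. Homogeneity and unitary invariance are clear, superadditivity is the Marcus--Lopes theorem (which you correctly name as the key classical input), and $(4)_!$ is used to extend $\Phi$ from the nonsingular cone to all of $\bM_n^+$. This is exactly the paper's proof: cite \cite{ML} for superadditivity, extend by $(4)_!$, then apply Lemma~\ref{lem-aux1} and Theorem~\ref{theorem-1}. So the fix is to delete the attempt to realize $\Phi$ as a derived anti-norm and replace it with the direct verification that $\Phi$ is a symmetric anti-norm.
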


\vskip 5pt
Note that letting $m=q=1$ we recapture the Rotfel'd trace inequality.

\vskip 5pt\noindent
\begin{proof}
By a theorem of Marcus and Lopes \cite{ML} (also \cite[p.\,116]{MOA}), the functional on
positive nonsingular matrices
$$
A\mapsto \frac{ {\mathrm{Tr\,}} \wedge^{m} A}{ {\mathrm{Tr\,}}\wedge^{m-1} A}
$$
is superadditive. This can be extended as an anti-norm on the whole of $\bM_n^+$ by using
condition $(4)_!$. The corollary then follows from Lemma \ref{lem-aux1} combined with
Theorem \ref{theorem-1}.
\end{proof}

\section{Full symmetry  and majorization}

In this section we consider a  stronger symmetry property of norms and anti-norms
in connection with majorization relations. We will focus on the case of diffuse algebras. Indeed, the case of $\bM_n$ is simpler as well as classical. Meanwhile, the setting of a general finite von
Neumann algebra $\cN$ is inappropriate to apply the majorization technique. This issue may be
justified by the fact \cite[Theorem 3.27]{FHNS} that $(\cN,\tau)$ with $\tau(I)=1$ satisfies
the {\it weak Dixmier property} (i.e., $\tau(A)$ is in the $\|\cdot\|_\infty$-closure of
the convex hull of $\{B\in\cN^+:\lambda(B)=\lambda(A)\}$ for every $A\in\cN^+$) if and only
if either $(\cN,\tau)$ is a subalgebra of $(\bM_n,n^{-1}\mathrm{Tr})$ containing all
diagonal matrices, or $\cN$ is diffuse.

Thus, in Sections 6 and 7, we shall always write $\cM$ (differently from $\cN$) to denote a diffuse finite von Neumann algebra with a faithful normal trace $\tau$ such that $\tau(I)=1$.

\begin{definition}\label{def-fully}\rm
A symmetric norm $\|\cdot\|$ on $\cM$ is said to be {\it fully symmetric} (or
{\it rearrangement invariant}\,) if $\lambda(A)=\lambda(B)$ implies $\|A\|=\|B\|$ for
$A,B\in\cM^+$. Also, a symmetric anti-norm $\|\cdot\|_!$ on $\cM^+$ is said to be
{\it fully symmetric} if the same property holds for $\|\cdot\|_!$.
\end{definition}

The next proposition says that the symmetry and the full symmetry properties are equivalent when $\cM$ is a II$_1$ factor (this is
also true and well-known for $\bM_n$).

\begin{prop}\label{prop-factor}
If $\cM$ is a  factor, then any symmetric norm and any symmetric anti-norm are fully
symmetric.
\end{prop}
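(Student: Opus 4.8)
The plan is to reduce the statement to Lemma~\ref{lemma-sp-dominance}, which already says that in a finite factor spectral dominance (in both directions) yields approximate unitary conjugacy. The key observation is that if $\cM$ is a II$_1$ factor and $A,B\in\cM^+$ satisfy $\lambda(A)=\lambda(B)$, then $B$ spectrally dominates $A$ \emph{and} $A$ spectrally dominates $B$. Applying Lemma~\ref{lemma-sp-dominance} twice, for every $\eps>0$ there are unitaries $U,V\in\cM$ with $UAU^*\le B+\eps I$ and $VBV^*\le A+\eps I$. By monotonicity of a symmetric norm (Lemma~\ref{monot}) this gives $\|A\|=\|UAU^*\|\le\|B+\eps I\|\le\|B\|+\eps\|I\|$ and symmetrically $\|B\|\le\|A\|+\eps\|I\|$; letting $\eps\searrow0$ yields $\|A\|=\|B\|$. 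The same argument works verbatim for a symmetric anti-norm, using that (by $(4)_!$) $\|A\|_!\le\|B+\eps I\|_!\searrow\|B\|_!$ as $\eps\searrow0$, and that an anti-norm is also monotone (which follows from $(3)_!$ exactly as the first inequality in property~(3) does, or can be quoted from the matrix-case discussion and extends by the same spectral-scale argument). Hence $\|A\|_!=\|B\|_!$.

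First I would state and dispose of the monotonicity of anti-norms, namely $\|A\|_!\le\|B\|_!$ whenever $0\le A\le B$ in $\cM^+$: write $B=A+(B-A)$ with $B-A\in\cM^+$ and apply $(3)_!$. Then, given $\lambda(A)=\lambda(B)$, I would invoke Lemma~\ref{lemma-sp-dominance} in both directions to produce, for each $\eps>0$, the two unitary conjugation inequalities above. Combining with monotonicity and unitary invariance ($(2)$ for norms, $(2)_!$ for anti-norms) gives the two-sided estimates, and a limit in $\eps$ closes the proof. For the Hilbert-space hypothesis I note that nothing here requires diffuseness beyond what is already assumed for $\cM$ throughout the section; the factor case is actually the cleanest instance, and indeed in a II$_1$ factor the spectral scale determines the unitary orbit up to operator-norm closure by the cited \cite[Lemma 4.1]{HN2} that underlies Lemma~\ref{lemma-sp-dominance}.

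The only genuine point to be careful about is the direction of spectral dominance: the definition in Lemma~\ref{lemma-sp-dominance} is not symmetric in $A$ and $B$, so one must explicitly record that $\lambda(A)=\lambda(B)$ forces dominance \emph{both ways}, and then apply the lemma twice with the roles of $A$ and $B$ interchanged. Everything else — the $\eps$-approximation, the use of monotonicity, the passage to the limit — is routine. I expect no real obstacle; the statement is essentially a packaging of Lemma~\ref{lemma-sp-dominance} together with monotonicity of (anti-)norms and the continuity axiom $(4)_!$.

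\begin{proof}
We first note that any symmetric anti-norm $\|\cdot\|_!$ on $\cM^+$ is monotone: if $0\le A\le B$ in $\cM^+$, then $B=A+(B-A)$ with $B-A\in\cM^+$, so $(3)_!$ gives $\|B\|_!\ge\|A\|_!+\|B-A\|_!\ge\|A\|_!$.

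Now let $\cM$ be a factor (hence a II$_1$ factor, since we are in the diffuse setting), and let $A,B\in\cM^+$ with $\lambda(A)=\lambda(B)$. Then $\lambda_t(A)\le\lambda_t(B)$ and $\lambda_t(B)\le\lambda_t(A)$ for all $t\in(0,\tau(I))$, i.e., each of $A,B$ spectrally dominates the other. Fix $\eps>0$. By Lemma~\ref{lemma-sp-dominance} applied in both directions, there are unitaries $U,V\in\cM$ with
$$
UAU^*\le B+\eps I,\qquad VBV^*\le A+\eps I.
$$
For a symmetric norm $\|\cdot\|$ on $\cM$, using $(2)$, Lemma~\ref{monot}, and $(3)$ we get
$$
\|A\|=\|UAU^*\|\le\|B+\eps I\|\le\|B\|+\eps\|I\|,
$$
and symmetrically $\|B\|\le\|A\|+\eps\|I\|$. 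Letting $\eps\searrow0$ yields $\|A\|=\|B\|$.

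For a symmetric anti-norm $\|\cdot\|_!$ on $\cM^+$, using $(2)_!$ and the monotonicity just established,
$$
\|A\|_!=\|UAU^*\|_!\le\|B+\eps I\|_!,
$$
and symmetrically $\|B\|_!\le\|A+\eps I\|_!$. Letting $\eps\searrow0$ and invoking $(4)_!$ gives $\|A\|_!\le\|B\|_!$ and $\|B\|_!\le\|A\|_!$, hence $\|A\|_!=\|B\|_!$. Therefore every symmetric norm and every symmetric anti-norm on a factor is fully symmetric.
\end{proof}
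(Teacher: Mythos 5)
Your proof is correct and follows essentially the same route as the paper's: apply Lemma~\ref{lemma-sp-dominance} in both directions, use monotonicity and unitary invariance, and let $\eps\searrow0$ (via $(4)_!$ in the anti-norm case). Your explicit derivation of monotonicity of anti-norms from $(3)_!$ is a detail the paper leaves implicit, but it is exactly the right justification.
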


\begin{proof}
Let $A,B\in\cM^+$ and assume that $\lambda(A)=\lambda(B)$. For a symmetric norm $\|\cdot\|$,
by Lemmas \ref{monot} and \ref{lemma-sp-dominance} we have $\|A\|\le\|B\|+\eps\|I\|$ for
every $\eps>0$, so $\|A\|\le\|B\|$ and the reverse inequality is similar. The proof is
similar for a symmetric anti-norm by using condition (4)$_!$.
\end{proof}

Recall some notions of majorization relevant to our discussions below. For
$A,B\in\overline{\cM}^+$, the {\it submajorization} $A\prec_w B$ is defined as
$\lambda(A)\prec_w\lambda(B)$, i.e.,
$$
\int_0^t\lambda_s(A)\,ds\le\int_0^t\lambda_s(B)\,ds,\qquad t\in(0,1),
$$
and the {\it majorization} $A\prec B$ means that $A\prec_wB$ and $\tau(A)=\tau(B)<\infty$.
The {\it supermajorization} $A\prec^w B$ is defined as
$$
\int_t^1\lambda_s(A)\,ds\ge\int_t^1\lambda_s(B)\,ds,\qquad t\in(0,1).
$$
The {\it log-supermajorization} $A\prec^{w(\log)}B$ is defined as
$$
\int_t^1\log\lambda_s(A)\,ds\ge\int_t^1\log\lambda_s(B)\,ds,\qquad t\in(0,1).
$$
These definitions make sense since the integrals always exist permitting $\pm\infty$.
(The finiteness assumption $\tau(I)<\infty$ is essential to introduce the supermajorization and the log-supermajorization.)

\begin{example}\label{Exam}
(1)\enspace For each $t\in(0,1]$ the functional
$$
\|X\|_{(t)}:=\int_0^t\mu_s(X)\,ds,\qquad X\in\cM
$$
is a fully symmetric norm on $\cM$, that is the continuous version of the Ky Fan $k$-norm
for matrices. The triangle inequality of $\|\cdot\|_{(t)}$ is a consequence of the
submajorization $\mu(X+Y)\prec_w\mu(X)+\mu(Y)$ for $X,Y\in\overline{\cM}$ (see \cite{HN1}).

\medskip
(2)\enspace
For each $t\in[0,1)$ the functional
$$
\|A\|_{\{t\}}:=\int_{1-t}^1\lambda_s(A)\,ds,\qquad A\in\cM^+
$$
is a fully symmetric anti-norm on $\cM^+$. The superadditivity of $\|\cdot\|_{\{t\}}$ is a
consequence of the majorization $\lambda(A+B)\prec\lambda(A)+\lambda(B)$ for $A,B\in\cM^+$
(see \cite{HN1}). This anti-norm is not a derived anti-norm.

\medskip
(3)\enspace
For each $t\in[0,1)$ and $p>0$, the derived anti-norm from the above $\|\cdot\|_{(t)}$ and
$p$ is written as
\begin{equation}\label{deriv-form}
\|A\|_!:=\lim_{\eps\searrow0}\|(A+\eps I)^{-p}\|_{(t)}^{-1/p}
=\biggl(\int_{1-t}^1\lambda_s(A)^{-p}\,ds\biggr)^{-1/p},\qquad A\in\cM^+
\end{equation}
with the usual convention $0^{-p}=\infty$ and $\infty^{-1/p}=0$. Obviously, this derived
anti-norm is fully symmetric. One can easily find a sequence $A_n,A\in\cM^+$ such that
$\|A_n-A\|_\infty\to0$ and $\|A_n\|_!=0$ for all $n$, but $\|A\|_!>0$. Therefore, this
$\|\cdot\|_!$ is not $\|\cdot\|_\infty$-continuous on $\cM^+$. On the other hand, the
derived anti-norm from $\|\cdot\|_\infty$ (and any $p>0$) is $\lambda_1(A)$, which is
$\|\cdot\|_\infty$-continuous on $\cM^+$. Thus, the continuity behavior with respect to
the operator norm in the diffuse case is subtler than the matrix case.
\end{example}

Since $\cM$ is diffuse, we can choose a family $\{F_t\}_{0\le t\le1}$ of projections
as in the proof of Lemma \ref{lemma-sp-dominance}. In our discussions below we will use such
a family $\{F_t\}$ without mentioning explicitly.

\subsection{Fully symmetric norms}

The following properties of fully symmetric norms were more or less discussed in study of non-commutative Banach function spaces (e.g., \cite{DDP1,DDP2,Wa}), usually as working assumptions rather than results. The book \cite{LSZ} contains a nice discussion on this topic.

\begin{prop}\label{full-sym}
Let $\|\cdot\|$ be a fully symmetric norm on $\cM$.
\begin{itemize}
\item[(a)] For every $A,B\in\overline{\cM}^+$, $A\prec_w B$ implies $\|A\|\le\|B\|$.
\item[(b)] If $A,A_n\in\overline{\cM}^+$ and $A_n\nearrow A$ in the $\tau$-measure topology
(or more weakly $\lambda_t(A_n)\nearrow\lambda_t(A)$ for a.e.\ $t\in(0,1)$), then
$\|A_n\|\nearrow\|A\|$.
\item[(c)] $\|X\|_1\|I\|\le\|X\|\le\|X\|_\infty\|I\|$ for all $X\in\overline{\cM}$, where
$\|X\|_1:=\tau(|X|)$ and $\|X\|_\infty:=\lim_{t\searrow0}\lambda_t(|X|)$, the operator norm.
\end{itemize}
\end{prop}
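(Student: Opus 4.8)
The plan is to prove the three items in the order (a), (c), (b), since (a) is the basic tool, (c) follows quickly from it by identifying the extreme cases of submajorization, and (b) is a monotone-convergence statement that uses (a) to handle the sup.

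For (a), the standard route is the Hardy–Littlewood–Pólya characterization of submajorization in a diffuse algebra: if $A\prec_w B$ in $\overline\cM^+$, then $\lambda(A)$ lies in the closed convex hull (in a suitable topology) of the functions $t\mapsto\lambda_{\pi(t)}(B)$ obtained from measure-preserving rearrangements, or more usefully, there is a ``substochastic'' description. Concretely, first I would reduce to the bounded case by truncation: for each $s>0$ one has $A\wedge s\prec_w B\wedge s$, and by Proposition \ref{prop-ext} together with monotone convergence $\|A\|=\sup_s\|A\wedge s\|$ and similarly for $B$, so it suffices to treat $A,B\in\cM^+$. For bounded operators, using the family $\{F_t\}_{0\le t\le1}$ of projections with $\tau(F_t)=t$, form $\tilde A:=\int_0^1\lambda_t(A)\,dF_t$ and $\tilde B:=\int_0^1\lambda_t(B)\,dF_t$ in the diffuse abelian subalgebra generated by $\{F_t\}$; by Proposition \ref{prop-factor}-type reasoning (full symmetry of $\|\cdot\|$ on $\cM$ applied to $A$ vs.\ $\tilde A$, using $\lambda(\tilde A)=\lambda(A)$) we have $\|A\|=\|\tilde A\|$ and $\|B\|=\|\tilde B\|$. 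Within the commutative algebra $L^\infty[0,1]$ the submajorization $\lambda(A)\prec_w\lambda(B)$ lets us write $\lambda(A)$ pointwise a.e.\ as $T\lambda(B)$ for a doubly substochastic operator $T$ on $L^1[0,1]$, equivalently as a $\|\cdot\|_\infty$-limit of convex combinations $\sum_j c_j(\sigma_j\lambda(B))\wedge(\text{something})$; the cleanest formulation is that $\lambda(A)\le$ a convex combination of rearrangements of $\lambda(B)$ up to an $\eps$ in sup-norm. Then full symmetry gives $\|\sigma_j\lambda(B)\|=\|B\|$ for each rearrangement, the triangle inequality for the norm handles the convex combination, Lemma \ref{monot} (monotonicity) absorbs the ``$\le$'', and letting $\eps\searrow0$ via \eqref{norm-ineq} yields $\|A\|\le\|B\|$.

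For (c): the inequalities $\tau(|X|)I\prec_w|X|\prec_w\|X\|_\infty I$ hold because the constant function $\tau(|X|)$ has the smallest possible ``partial integrals'' among functions with the same total integral, while $\|X\|_\infty$ dominates $\lambda_t(|X|)$ pointwise; more precisely $\int_0^t\tau(|X|)\,ds=t\,\tau(|X|)\le\int_0^t\lambda_s(|X|)\,ds\le\int_0^t\|X\|_\infty\,ds$ for all $t\in(0,1)$, the middle integral being concave and the constant being its chord value at $t=1$. Applying part (a) to these submajorizations (with $|X|$ in place of both $A$ and $B$ as appropriate) and using $\|\,cI\,\|=c\|I\|$ from $(1)$ gives the two-sided estimate.

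For (b): since $\lambda_t(A_n)\nearrow\lambda_t(A)$ a.e., we get $A_n\prec_w A$ for each $n$, so $\|A_n\|\le\|A\|$ by (a) and the sequence $\|A_n\|$ is nondecreasing with $\sup_n\|A_n\|\le\|A\|$. For the reverse inequality, fix $s>0$; then $A_n\wedge s\nearrow A\wedge s$ in $\cM^+$ with uniformly bounded operator norm, and one checks $\lambda_t(A_n\wedge s)\nearrow\lambda_t(A\wedge s)$ a.e., hence $\int_0^1|\lambda_t(A\wedge s)-\lambda_t(A_n\wedge s)|\,dt\to0$ by monotone convergence, i.e.\ $(A\wedge s)-(A_n\wedge s)$ has $\|\cdot\|_1\to0$ while staying bounded by $sI$ in operator norm; by (c), $\|(A\wedge s)-(A_n\wedge s)\|\le s^{1-\theta}\|\,|A\wedge s-A_n\wedge s|\,\|_1^{\theta}\,$-type interpolation is not available directly, so instead I would argue $\|A\wedge s\|\le\|A_n\wedge s\|+\|(A\wedge s-A_n\wedge s)\|$ and bound the last term using $A\wedge s-A_n\wedge s\prec_w (\lambda_0(A\wedge s)-\lambda_0(A_n\wedge s))\cdot\chi + \dots$; cleaner is: the functions $\lambda_t(A\wedge s)-\lambda_t(A_n\wedge s)$ decrease to $0$ in $L^1$, so by (a) applied with the fully symmetric norm it suffices to know $\|\cdot\|$ is order-continuous on the order interval $[0,sI]$ — but that can fail. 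The safe path is: $\|A\wedge s\|=\|A_n\wedge s + (A\wedge s - A_n\wedge s)\|\le \|A_n\wedge s\| + s^{-1}\|I\|\cdot 0$... Let me instead use that, by (a), $\|A\wedge s\| \le \|A_n \wedge s\| + \|(A \wedge s - A_n \wedge s)\wedge s\|$ and the last norm is $\le \|(A\wedge s - A_n\wedge s)\|_1\cdot$... This requires $\|\cdot\|$ to control $\|\cdot\|_\infty$-bounded, $\|\cdot\|_1$-small sets, which is exactly order-continuity and need not hold (e.g.\ $\|\cdot\|_\infty$).

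The main obstacle is therefore (b): the claim ``$\|A_n\|\nearrow\|A\|$'' as stated must be interpreted as $\sup_n\|A_n\|=\|A\|$, and this does hold because $\|\cdot\|$ is a \emph{fully symmetric} norm — one uses the Fatou-type property that for such norms $\|A\| = \sup\{\|B\| : B\in\cM^+,\ B\le A,\ B\ \text{bounded}\}$ and, crucially, the representation $\|A\|=\sup_t t^{-1}\|A\|_{(t)}$-type formulas combined with monotone convergence of $\|A_n\|_{(t)}=\int_0^t\lambda_s(A_n)\,ds\nearrow\int_0^t\lambda_s(A)\,ds=\|A\|_{(t)}$ (genuine monotone convergence of integrals, no order-continuity needed). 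So the correct structure of (b) is: reduce $\|\cdot\|$ via the Hardy space/ K\"othe-dual representation $\|A\|=\sup\{\tau(AC): C\ge0,\ \|C\|'\le1\}$ where $\|\cdot\|'$ is the (fully symmetric) associate norm, and then $\tau(A_nC)\nearrow\tau(AC)$ by monotone convergence for each fixed $C$, giving $\sup_n\|A_n\|\ge\sup_C\sup_n\tau(A_nC)=\sup_C\tau(AC)=\|A\|$. Establishing (or quoting, from \cite{LSZ,DDP1,DDP2}) this duality formula for fully symmetric norms on a diffuse $\cM$ is the real content; once it is in hand, (b) is immediate and (a), (c) can also be re-derived from it, but the self-contained majorization argument sketched above for (a) and (c) is preferable and I would keep it.
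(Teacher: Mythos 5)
Your treatment of the bounded case of (a) (Chong's theorem to get $\lambda(A)\le h\prec\lambda(B)$, then $h^*$ in the closed convex hull of measure-preserving rearrangements of $\lambda(B)$, then full symmetry, the triangle inequality and monotonicity) is exactly the paper's argument, and your (c) is also the paper's. But there are two genuine gaps. First, your reduction of (a) to the bounded case rests on the claim that $A\prec_wB$ implies $A\wedge s\prec_wB\wedge s$, and this is false: submajorization is stable under increasing \emph{convex} maps, not concave ones. Concretely, take $\lambda(A)=1.5$ on $(0,\tfrac12)$ and $1$ on $(\tfrac12,1)$, and $\lambda(B)=2$ on $(0,\tfrac12)$ and $0.6$ on $(\tfrac12,1)$; then $A\prec_wB$ (partial integrals $0.75\le1$ and $1.25\le1.3$), but truncating at $s=1.6$ gives total integrals $1.25$ versus $1.1$, so $A\wedge s\not\prec_wB\wedge s$. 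The paper instead proves unbounded (a) \emph{after} (c): from (c) one gets $\|A\|_1\le\|B\|_1<\infty$, and then for fixed $\rho\in(0,1)$ and each $n$ one chooses $m=m(n)$ with $\rho A\wedge n\prec_wB\wedge m$, using that $\int_0^t\lambda_s(B)\wedge m\,ds\to\int_0^t\lambda_s(B)\,ds$ uniformly in $t$; the factor $\rho$ is what absorbs the truncation error, and one lets $n\to\infty$ and then $\rho\nearrow1$.

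Second, part (b) is not actually proved. You correctly diagnose the obstacle (order continuity of the norm can fail, so $\|A-A_n\|\to0$ is hopeless), but your proposed repair via the K\"othe duality $\|A\|=\sup\{\tau(AC):\|C\|'\le1\}$ is circular: the identity $\|\cdot\|=\|\cdot\|''$ is \emph{equivalent} to the Fatou property, which is precisely the content of (b) (see the paper's Remark~6.11); for a general symmetric norm one only has $\|\cdot\|''\le\|\cdot\|$ a priori, and the references you would quote assume the Fatou property rather than prove it in this generality. The paper's actual argument is elementary and avoids the issue entirely: for bounded $A_n\nearrow A$ one shows $A\prec_wA_n+\eps I$ for all large $n$, whence $\|A\|\le\|A_n\|+\eps\|I\|$ by (a). The point is to control the averages $t^{-1}\int_0^t(\lambda_s(A)-\lambda_s(A_n))\,ds$ by splitting: for $t<\delta$ one uses that this average tends to $\lambda_0(A)-\lambda_0(A_n)$ as $t\searrow0$ together with $\lambda_0(A_n)\nearrow\lambda_0(A)$, and for $t\ge\delta$ one bounds it by $\delta^{-1}\int_0^1(\lambda_s(A)-\lambda_s(A_n))\,ds\to0$ by dominated convergence. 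The unbounded case of (b) then follows from the definition $\|A\|=\sup_{s>0}\|A\wedge s\|$ (here the truncation is harmless because $\lambda_t(A_n\wedge s)\nearrow\lambda_t(A\wedge s)$ pointwise, no submajorization of truncations being needed). This $\eps I$-perturbation trick is the missing idea in your plan.
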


\begin{proof}
(a) (for bounded operators)\enspace
Assume that $A,B\in\cM^+$ and $A\prec_wB$.
Since $f:=\lambda(A)\prec_wg:=\lambda(B)$ as functions in $L^\infty(0,1)^+$, by
\cite[Theorem 1.1]{Ch} there is an $h\in L^\infty(0,1)^+$ such that $f\le h\prec g$. Let
$\Gamma$ denote the set of bijective Borel transformations on $(0,1)$ preserving the
Lebesgue measure. In the same way as in the proof of \cite[Theorem 2.1, Lemma 2.2]{HN3} we
can see that the decreasing rearrangement $h^*$ of $h$ is in the $\|\cdot\|_\infty$-closure
of the convex hull of $\{\gamma f:\gamma\in\Gamma\}$, where $(\gamma f)(t):=f(\gamma^{-1}t)$.
Thus, for every $\eps>0$ there are $\gamma_i\in\Gamma$ and $\lambda_i>0$, $1\le i\le k$,
such that $\sum_{i=1}^k\lambda_i=1$ and $h^*\le\sum_{i=1}^k\lambda_i\gamma_ig+\eps1$. Let
$C:=\int_0^1h^*(t)\,dF_t$ and $B_i:=\int_0^1(\gamma_ig)(t)\,dF_t$. Since
$\lambda(B_i)=\lambda(B)$, the monotonicity and the full symmetry of $\|\cdot\|$ yield
$$
\|A\|\le\|C\|\le\sum_{i=1}^k\lambda_i\|B_i\|+\eps\|I\|=\|B\|+\eps\|I\|.
$$
Hence $\|A\|\le\|B\|$.

(b)\enspace
First we prove that if $A,A_n\in\cM^+$ and $\lambda_t(A_n)\nearrow\lambda_t(A)$ a.e., then
$\|A_n\|\nearrow\|A\|$. By (a), $\|A_n\|$ is increasing in $n$. For every $\eps>0$, since
$\lambda_0(A_n)=\|A_n\|_\infty\nearrow\lambda_0(A)=\|A\|_\infty$ and
$$
\lim_{t\searrow0}{1\over t}\int_0^t(\lambda_s(A)-\lambda_s(A_n))\,ds
=\lambda_0(A)-\lambda_0(A_n),
$$
one can choose $n_0\in\bN$ and $\delta>0$ so that
$t^{-1}\int_0^t(\lambda_s(A)-\lambda_s(A_{n_0}))\,ds<\eps$ for all $t\in(0,\delta)$,
and hence
$$
{1\over t}\int_0^t(\lambda_s(A)-\lambda_s(A_n))\,ds<\eps,
\qquad t\in(0,\delta),\ n\ge n_0.
$$
Furthermore, for any $t\in[\delta,1)$ one has
$$
{1\over t}\int_0^t(\lambda_s(A)-\lambda_s(A_n))\,ds
\le{1\over\delta}\int_0^1(\lambda_s(A)-\lambda_s(A_n))\,ds\searrow0
\quad\mbox{as $n\to\infty$}
$$
by the dominated convergence theorem. Hence there exists an $n_1\in\bN$ such
that
$$
{1\over t}\int_0^t(\lambda_s(A)-\lambda_s(A_n))\,ds<\eps,
\qquad t\in[\delta,1),\ n\ge n_1.
$$
If $n\ge\max\{n_0,n_1\}$, then the above estimates imply that $A\prec_w A_n+\eps I$
so that $\|A\|\le\|A_n\|+\eps\|I\|$ by (a). Thus $\|A_n\|\nearrow\|A\|$.

Next assume that $A,A_n\in\overline{\cM}^+$ and $\lambda_t(A_n)\nearrow\lambda_t(A)$ a.e.
For every $s>0$, since $\lambda_t(A_n\wedge s)\nearrow\lambda_t(A\wedge s)$ a.e., we have
$\|A_n\wedge s\|\nearrow\|A\wedge s\|$ from the first step. Hence
$\|A_n\|=\sup_{s>0}\|A_n\wedge s\|$ is increasing in $n$ and
$$
\|A\|=\sup_{s>0}\|A\wedge s\|=\sup_{s>0,\,n\in\bN}\|A_n\wedge s\|=\sup_{n\in\bN}\|A_n\|.
$$
Therefore, $\|A_n\|\nearrow\|A\|$.

(c)\enspace
For $X\in\cM$, $\|X\|\le\|X\|_\infty\|I\|$ was given in \eqref{norm-ineq}. Since
$\tau(|X|)I\prec|X|$, $\|X\|_1\|I\|\le\|X\|$ by (a). These can easily extend to all
$X\in\overline{\cM}$ by (b).

(a) (for unbounded operators)\enspace
Let $A,B\in\overline{\cM}^+$ and assume $A\prec_wB$. We may assume that $\|B\|<\infty$ and
so $\|A\|_1\le\|B\|_1<\infty$ by (c). Fix $0<\rho<1$. Then, for each $n\in\bN$, since $\int_0^t\lambda_s(B)\wedge m\,ds\nearrow\int_0^t\lambda_s(B)\,ds$ as $m\to\infty$ uniformly in $t\in(0,1)$, one can choose an $m\in\bN$
such that $\rho A\wedge n\prec_wB\wedge m$ and hence $\|\rho A\wedge n\|\le\|B\|$ by (a). Hence $\|A\|\le\|B\|$ follows by letting $n\to\infty$ and then $\rho\nearrow1$.
\end{proof}

In view of Example \ref{Exam}\,(1) we have

\begin{cor}
Let $X,Y\in\overline{\cM}$. Then $|X|\prec_w|Y|$ (i.e., $\mu(X)\prec_w\mu(Y)$) if and only
if $\|X\|\le\|Y\|$ for all fully symmetric norms on $\cM$ (extended to $\overline{\cM}$).
\end{cor}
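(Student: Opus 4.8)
The plan is to obtain both implications directly from results already in hand: Proposition \ref{full-sym}\,(a) handles one direction, and the continuous Ky Fan norms of Example \ref{Exam}\,(1) handle the other, so essentially nothing new has to be proved.

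First I would treat the forward implication. Suppose $\mu(X)\prec_w\mu(Y)$, which is by definition the statement $|X|\prec_w|Y|$ for the operators $|X|,|Y|\in\overline{\cM}^+$. By construction of the extension in Section~4, every symmetric norm on $\overline{\cM}$ satisfies $\|Z\|=\|\,|Z|\,\|$ (the value $\lim_s\|\,|Z|\wedge s\|$ depends only on $|Z|$), so it suffices to show $\|\,|X|\,\|\le\|\,|Y|\,\|$ for every fully symmetric norm on $\cM$. This is exactly Proposition \ref{full-sym}\,(a) applied to the pair $|X|,|Y|$.

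For the converse, assume $\|X\|\le\|Y\|$ for all fully symmetric norms on $\cM$, understood via their extensions to $\overline{\cM}$. I would test this against the continuous Ky Fan norms $\|\cdot\|_{(t)}$, $t\in(0,1]$, which are fully symmetric norms on $\cM$ by Example \ref{Exam}\,(1). The one point that deserves a line of argument is that the Section~4 extension of $\|\cdot\|_{(t)}$ to $\overline{\cM}$ still computes partial integrals of $\mu$: for $X\in\overline{\cM}$ one has $|X|\wedge n\nearrow|X|$, hence $\mu_s(|X|\wedge n)=\mu_s(X)\wedge n\nearrow\mu_s(X)$ for a.e.\ $s$, and by Proposition \ref{full-sym}\,(b) (or plainly monotone convergence) $\|X\|_{(t)}=\sup_n\int_0^t\mu_s(|X|\wedge n)\,ds=\int_0^t\mu_s(X)\,ds$. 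Applying the hypothesis to $\|\cdot\|_{(t)}$ then gives $\int_0^t\mu_s(X)\,ds\le\int_0^t\mu_s(Y)\,ds$ for every $t\in(0,1)$, i.e.\ $\mu(X)\prec_w\mu(Y)$.

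I do not expect a genuine obstacle here; the corollary is a repackaging of Proposition \ref{full-sym}\,(a) together with the fact that the family $\{\|\cdot\|_{(t)}\}$ is rich enough to detect submajorization. The only item needing care is the verification that these Ky Fan norms, once pushed to the unbounded setting by the recipe of Proposition \ref{prop-ext}, retain their integral form — and that is the monotone-convergence step just described.
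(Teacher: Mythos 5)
Your proposal is correct and is exactly the argument the paper intends: the forward direction is Proposition \ref{full-sym}\,(a), and the converse tests the hypothesis against the Ky Fan norms $\|\cdot\|_{(t)}$ of Example \ref{Exam}\,(1), whose extension to $\overline{\cM}$ retains the integral form by monotone convergence. The paper leaves this unwritten ("In view of Example \ref{Exam}\,(1) we have"), and your write-up, including the care about the extension step, fills in precisely what is implicit there.
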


\subsection{Fully symmetric anti-norms}

Fully symmetric anti-norms have the following properties. It would be worthwhile to consider
these properties in parallel to those in Proposition \ref{full-sym}.

\begin{prop}\label{full-sym-anti}
Let $\|\cdot\|_!$ be a fully symmetric anti-norm on $\cM^+$.
\begin{itemize}
\item[(a)] For every $A,B\in\overline{\cM}^+$, $A\prec^wB$ implies $\|A\|_!\ge\|B\|_!$.
\item[(b)] If $A,A_n\in\cM^+$ and $A_n\searrow A$ (or more weakly
$\lambda_t(A_n)\searrow\lambda_t(A)$ for a.e.\ $t\in(0,1)$), then $\|A_n\|_!\searrow\|A\|_!$.
\item[(c)] $\lambda_1(A)\|I\|_!\le\|A\|_!\le\tau(A)\|I\|_!$ for all $A\in\overline{\cM}$,
where $\lambda_1(A):=\lim_{t\nearrow1}\lambda_t(A)$.
\end{itemize}
\end{prop}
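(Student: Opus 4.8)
The plan is to develop this in parallel with Proposition~\ref{full-sym}, systematically replacing submajorization by supermajorization, monotone–subadditive estimates by monotone–superadditive ones, and $\nearrow$-limits by $\searrow$-limits. Observe first that every symmetric anti-norm is \emph{monotone} on $\overline{\cM}^+$: if $A\le B$ then $\|B\|_!\ge\|A\|_!+\|B-A\|_!\ge\|A\|_!$ by $(3)_!$. Combined with full symmetry this gives the basic comparison used throughout: if $\lambda_t(A)\ge\lambda_t(B)$ for a.e.\ $t$ then $\|A\|_!\ge\|B\|_!$ (realize $A,B$ as $\int f\,dF_t$ and $\int g\,dF_t$ with $f:=\lambda(A)\ge g:=\lambda(B)$, which are then comparable operators, using a family $\{F_t\}$ available since $\cM$ is diffuse).

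For (a), take first bounded $A,B\in\cM^+$ with $A\prec^wB$ and put $f:=\lambda(A),\,g:=\lambda(B)\in L^\infty(0,1)^+$. \emph{Step 1 (supermajorization sandwich).} There is a non-increasing $h\in L^\infty(0,1)^+$ with $f\ge h$ and $h\prec g$. Since $f\prec^wg$ forces $\int_0^1f\ge\int_0^1g$, choose $c>0$ with $\int_0^1(f\wedge c)=\int_0^1g$ and set $h:=f\wedge c$, $t_c:=|\{f>c\}|$; for $t\ge t_c$ one has $\int_0^th=\int g-\int_t^1f\le\int g-\int_t^1g=\int_0^tg$ by $f\prec^wg$, while for $t\le t_c$ one has $\int_0^th=ct\le\int_0^tg$, since $c=\tfrac1{t_c}\int_0^{t_c}h\le\tfrac1{t_c}\int_0^{t_c}g\le\tfrac1t\int_0^tg$ (the mean of a non-increasing function over an initial interval is non-increasing in its length, and $\int_0^{t_c}h\le\int_0^{t_c}g$ follows from $\int_{t_c}^1f\ge\int_{t_c}^1g$ because $\int h=\int g$). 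Thus $h\prec g$. \emph{Step 2.} Exactly as in the proof of Proposition~\ref{full-sym}(a) (via \cite[Theorem~2.1, Lemma~2.2]{HN3}), the non-increasing function $h$ lies in the $\|\cdot\|_\infty$-closure of the convex hull of $\{\gamma g:\gamma\in\Gamma\}$, where $\Gamma$ is the group of measure-preserving Borel bijections of $(0,1)$; hence for each $\eps>0$ there are $\gamma_i\in\Gamma$ and $\lambda_i>0$ with $\sum_i\lambda_i=1$ and $\sum_i\lambda_i(\gamma_ig)\le h+\eps$ pointwise. \emph{Step 3.} Put $C:=\int_0^1h\,dF_t$ and $B_i:=\int_0^1(\gamma_ig)\,dF_t\in\cM^+$; then $\lambda(B_i)=g=\lambda(B)$, so $\|B_i\|_!=\|B\|_!$, and $C+\eps I\ge\sum_i\lambda_iB_i$. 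By monotonicity, $(1)_!$ and $(3)_!$,
$$
\|C+\eps I\|_!\ \ge\ \Bigl\|\sum_i\lambda_iB_i\Bigr\|_!\ \ge\ \sum_i\lambda_i\|B_i\|_!\ =\ \|B\|_!.
$$
Letting $\eps\searrow0$ and using $(4)_!$ (legitimate since $C$ is bounded) yields $\|C\|_!\ge\|B\|_!$; together with $\|A\|_!\ge\|C\|_!$ (from $f\ge h$ and the comparison above) this gives $\|A\|_!\ge\|B\|_!$. For general $A,B\in\overline{\cM}^+$ with $A\prec^wB$: supermajorization is preserved by the non-decreasing concave truncation $t\mapsto t\wedge s$, so $A\wedge s\prec^wB\wedge s$, whence $\|A\wedge s\|_!\ge\|B\wedge s\|_!$ by the bounded case, and $\|A\|_!=\sup_s\|A\wedge s\|_!\ge\sup_s\|B\wedge s\|_!=\|B\|_!$.

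Parts (b) and (c) are consequences of (a). For (b), $\|A_n\|_!$ is non-increasing and $\ge\|A\|_!$ by monotonicity. Fix $\eps>0$ and consider the tail means $G_n(t):=\tfrac1{1-t}\int_t^1\lambda_s(A_n)\,ds$ and $G(t):=\tfrac1{1-t}\int_t^1\lambda_s(A)\,ds$; since the mean of a non-increasing function over $(t,1)$ tends to its essential infimum as $t\nearrow1$, $G_n$ and $G$ extend to continuous functions on $[0,1]$ with $G_n(1)=\lambda_1(A_n),\ G(1)=\lambda_1(A),\ G_n(0)=\tau(A_n),\ G(0)=\tau(A)$, and $G_n\searrow G$ pointwise on $[0,1]$ by dominated and monotone convergence. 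By Dini's theorem the convergence is uniform, so for large $n$, $G_n(t)-G(t)<\eps$ for all $t$, i.e.\ $\int_t^1\lambda_s(A_n)\,ds\le\int_t^1\lambda_s(A+\eps I)\,ds$ for all $t$, that is $A+\eps I\prec^wA_n$; by (a), $\|A_n\|_!\le\|A+\eps I\|_!$. Hence $\limsup_n\|A_n\|_!\le\|A+\eps I\|_!$, and $\eps\searrow0$ together with $(4)_!$ gives $\limsup_n\|A_n\|_!\le\|A\|_!$, so $\|A_n\|_!\searrow\|A\|_!$. For (c), when $A\in\cM^+$ the tail means of $\lambda(A)$ decrease from $\tau(A)$ to $\lambda_1(A)$, which is precisely $\tau(A)I\prec^wA\prec^w\lambda_1(A)I$; applying (a) gives $\tau(A)\|I\|_!\ge\|A\|_!\ge\lambda_1(A)\|I\|_!$. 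The unbounded case follows by applying this to $A\wedge s$ and letting $s\nearrow\infty$, since $\lambda_1(A\wedge s)\nearrow\lambda_1(A)$, $\|A\wedge s\|_!\nearrow\|A\|_!$ and $\tau(A\wedge s)\nearrow\tau(A)$.

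The crux is part (a). The delicate point is obtaining the approximation of $h$ by convex combinations of rearrangements of $g$ \emph{from above} ($\sum_i\lambda_i(\gamma_ig)\le h+\eps$), which forces $h$ to lie genuinely in the $\|\cdot\|_\infty$-closure of the convex hull of $\{\gamma g\}$ — available here only because $h\prec g$ is full rather than merely weak majorization — and not just to be dominated by an element of it (the latter being all that $f\prec^wg$ yields directly). Establishing the sandwich of Step~1, verifying stability of $\prec^w$ under the truncations needed for the unbounded case, and the Dini argument for (b) are comparatively routine once this is in place.
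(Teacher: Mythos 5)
Your proof is correct and follows essentially the same route as the paper's: reduce to the bounded case using that $\prec^w$ is stable under $t\mapsto t\wedge s$, produce the sandwich $f\ge h\prec g$, and run the convex-hull approximation of \cite{HN3} against monotonicity, $(1)_!$--$(4)_!$ and full symmetry, with (b) and (c) then following from tail-average estimates and (a). You fill in details the paper leaves implicit (the explicit construction $h=f\wedge c$, and Dini's theorem in place of the paper's two-region estimate in (b)), but these are refinements rather than a different method.
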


\begin{proof}
(a)\enspace
Since $A\prec^wB$ implies $A\wedge s\prec^wB\wedge s$ for any $s>0$ (seen as in the discrete case \cite[p.\,167]{MOA}), we may consider the
case $A,B\in\cM^+$. Since $f:=\lambda(A)\prec^wg:=\lambda(B)$ as functions in
$L^\infty(0,1)^+$, there is an $h\in L^\infty(0,1)^+$ such that $f\ge h\prec g$. The
remaining proof being similar to that of Proposition \ref{full-sym}\,(a), we omit the
details.

(b)\enspace
Assume that $A,A_n\in\cM$ and $\lambda_t(A_n)\searrow\lambda_t(A)$ for a.e.\ $t\in(0,1)$.
By (a), $\|A_n\|_!$ is decreasing in $n$. For every $\eps>0$, since
$\lambda_1(A_n)\searrow\lambda_1(A)$ and
$$
\lim_{t\searrow0}{1\over t}\int_{1-t}^1(\lambda_s(A_n)-\lambda_s(A))\,ds
=\lambda_1(A_n)-\lambda_1(A),
$$
one can show that $A+\eps I\prec^wA_n$ for all sufficiently large $n$, as in the proof of
Proposition \ref{full-sym}\,(b) by replacing $t^{-1}\int_0^t$ with $t^{-1}\int_{1-t}^1$.
By (a) this implies that $\|A+\eps I\|_!\ge\lim_n\|A_n\|_!$. Letting
$\eps\searrow0$ gives $\|A\|_!\ge\lim_n\|A_n\|_!$ and so $\|A_n\|_!\searrow\|A\|_!$.

(c)\enspace
For $A\in\overline{\cM}^+$, $\lambda_1(A)I\le A$ implies $\lambda_1(A)\|I\|_!\le\|A\|_!$.
Assuming $\tau(A)<\infty$, we have $\tau(A)\|I\|_!\ge\|A\|_!$ from $\tau(A)I\prec A$.
\end{proof}

In view of Example \ref{Exam}\,(2) we have

\begin{cor}
Let $A,B\in\overline{\cM}^+$. Then $A\prec^wB$ if and only if $\|A\|_!\le\|B\|_!$ for all
fully symmetric anti-norms on $\cM^+$ (extended to $\overline{\cM}^+$).
\end{cor}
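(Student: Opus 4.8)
The plan is to deduce this corollary from Example \ref{Exam}\,(2) together with Proposition \ref{full-sym-anti}\,(a). The ``only if'' direction is immediate: if $A\prec^wB$, then Proposition \ref{full-sym-anti}\,(a) gives $\|A\|_!\ge\|B\|_!$ for every fully symmetric anti-norm, which is exactly the claimed inequality read in the direction $\|B\|_!\le\|A\|_!$. (One must be careful here about which inequality is being asserted; the corollary states $\|A\|_!\le\|B\|_!$ iff $A\prec^wB$, so in fact I expect the intended reading to match the supermajorization convention of the paper, where $A\prec^wB$ means the tail integrals of $\lambda(A)$ dominate those of $\lambda(B)$, and then Proposition \ref{full-sym-anti}\,(a) indeed yields $\|A\|_!\ge\|B\|_!$.) I would simply invoke Proposition \ref{full-sym-anti}\,(a) verbatim for this half.

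For the converse, the idea is to use the family of anti-norms $\|\cdot\|_{\{t\}}$ from Example \ref{Exam}\,(2) as a separating family. For each $t\in[0,1)$,
$$
\|A\|_{\{t\}}=\int_{1-t}^1\lambda_s(A)\,ds
$$
is a fully symmetric anti-norm on $\cM^+$, and it extends to $\overline{\cM}^+$ by Proposition \ref{prop-ext-anti}; the extension is still computed by the same integral formula since $\lambda_s(A\wedge n)\nearrow\lambda_s(A)$ and monotone convergence applies. Hence, if $\|A\|_!\le\|B\|_!$ holds for \emph{all} fully symmetric anti-norms, then in particular $\|A\|_{\{t\}}\le\|B\|_{\{t\}}$ for every $t\in[0,1)$, i.e.
$$
\int_{1-t}^1\lambda_s(A)\,ds\le\int_{1-t}^1\lambda_s(B)\,ds,\qquad t\in[0,1).
$$
Again being careful with the direction of the convention, this is precisely the statement that the supermajorization relation holds between $A$ and $B$ in the appropriate order; reindexing $t\mapsto 1-t$ turns it into the defining inequalities $\int_t^1\lambda_s(\,\cdot\,)\,ds$ of $\prec^w$ as written in the paper. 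So one direction of the equivalence falls out of testing against this single one-parameter family.

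The main point requiring a little care — and the only real obstacle — is the bookkeeping of the two sign/order conventions: the supermajorization $A\prec^wB$ is defined by $\int_t^1\lambda_s(A)\,ds\ge\int_t^1\lambda_s(B)\,ds$, so ``larger in supermajorization'' means ``larger tail integrals'', whereas the anti-norm $\|\cdot\|_{\{t\}}$ is itself a tail integral; one must check that the inequality asserted in the corollary lines up consistently, and if it does not at face value it is only because of a harmless transposition of $A$ and $B$ or of $\le$ and $\ge$ in the statement. I would verify this once at the start and then present the argument as: (i) $\Rightarrow$ is Proposition \ref{full-sym-anti}\,(a); (ii) $\Leftarrow$ follows by applying the hypothesis to the anti-norms $\|\cdot\|_{\{t\}}$, $t\in[0,1)$, whose extensions to $\overline{\cM}^+$ are given by $\int_{1-t}^1\lambda_s(\cdot)\,ds$, thereby recovering all the defining integral inequalities of $\prec^w$. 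No further limiting or measurable-selection arguments are needed, since Proposition \ref{full-sym-anti} and Proposition \ref{prop-ext-anti} already handle the unbounded case.
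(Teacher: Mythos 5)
Your proof is correct and is exactly the argument the paper intends (the paper gives no proof beyond the pointer to Example \ref{Exam}\,(2)): one direction is Proposition \ref{full-sym-anti}\,(a) verbatim, and the converse follows by testing the hypothesis against the separating family $\|\cdot\|_{\{t\}}$, $t\in[0,1)$, whose extension to $\overline{\cM}^+$ is indeed still given by the tail integral $\int_{1-t}^1\lambda_s(\cdot)\,ds$ by monotone convergence. You are also right to flag the sign: as printed the corollary's $\|A\|_!\le\|B\|_!$ is inconsistent with Proposition \ref{full-sym-anti}\,(a) (take $A=2I$, $B=I$), so the inequality must read $\|A\|_!\ge\|B\|_!$ (equivalently, $A$ and $B$ must be transposed), exactly as your argument produces.
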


\begin{remark}
Proposition \ref{full-sym}\,(b) means that a fully symmetric norm extended to
$\overline{\cM}$ satisfies the {\it Fatou property} (see \cite{DDP2}). Proposition
\ref{full-sym-anti}\,(b) is considered as the ``anti-Fatou property".
Even though $(4)_!$ may not hold in $\overline{\cM}^+$ as noted in Section 5,
it is not known whether the anti-Fatou property holds for $A,A_n\in\overline{\cM}^+$ when
$\|A_n\|_!<\infty$ and $A_n\searrow A$ in the $\tau$-measure topology. For fully symmetric
derived anti-norms, this property will be shown in the next subsection.
\end{remark}

\subsection{Fully symmetric derived anti-norms}

In the rest of the section we will consider fully symmetric derived anti-norms.

\begin{lemma}\label{lemma-fully-deriv}
Let $\|\cdot\|_!$ be a derived anti-norm on $\cM^+$, which is derived from a symmetric norm
$\|\cdot\|$ on $\cM$ and a $p>0$. Then $\|\cdot\|_!$ is fully symmetric if and only if so is
$\|\cdot\|$.
\end{lemma}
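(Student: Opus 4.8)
The plan is to prove the equivalence directly from the rearrangement-invariance form of full symmetry in Definition \ref{def-fully}; by Propositions \ref{full-sym}(a) and \ref{full-sym-anti}(a) (with their trivial converses) this is the same as the submajorization monotonicity of $\|\cdot\|$ and the supermajorization monotonicity of $\|\cdot\|_!$, so either formulation may be invoked. In both directions the engine is an $\eps$-regularization that keeps every operator bounded and \emph{invertible}, so that Definition \ref{Def-2} and Proposition \ref{prop-deriv}(c) let me pass freely between $\|\cdot\|$ and $\|\cdot\|_!$ via the strictly monotone spectral maps $t \mapsto (t+\eps)^{-p}$ and $t \mapsto (t+\eps)^{-1/p}$, which transport equality of spectral scales by composition.

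First I would show that full symmetry of $\|\cdot\|$ implies that of $\|\cdot\|_!$. Given $A,B \in \cM^+$ with $\lambda(A) = \lambda(B)$, for each $\eps > 0$ the bounded operators $(A+\eps I)^{-p}, (B+\eps I)^{-p} \in \cM^+$ satisfy $\lambda((A+\eps I)^{-p}) = \lambda((B+\eps I)^{-p})$, so full symmetry of $\|\cdot\|$ gives $\|(A+\eps I)^{-p}\| = \|(B+\eps I)^{-p}\|$; letting $\eps \searrow 0$ in the defining limit of Definition \ref{Def-2} yields $\|A\|_! = \|B\|_!$.

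For the converse I would run the same regularization through the inverse substitution. Assuming $\|\cdot\|_!$ fully symmetric and $C,D \in \cM^+$ with $\lambda(C) = \lambda(D)$, I set $A_\eps := (C+\eps I)^{-1/p}$ and $B_\eps := (D+\eps I)^{-1/p}$, which are bounded and nonsingular (since $\eps I \le C+\eps I \le (\|C\|_\infty + \eps)I$) with $A_\eps^{-p} = C+\eps I$. Then $\lambda(A_\eps) = \lambda(B_\eps)$, so full symmetry of $\|\cdot\|_!$ together with Proposition \ref{prop-deriv}(c) gives $\|C+\eps I\|^{-1/p} = \|A_\eps\|_! = \|B_\eps\|_! = \|D+\eps I\|^{-1/p}$, hence $\|C+\eps I\| = \|D+\eps I\|$. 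The continuity estimate $\|C\| \le \|C+\eps I\| \le \|C\| + \eps\|I\|$ from Lemma \ref{monot} and \eqref{norm-ineq} then lets me send $\eps \searrow 0$ to conclude $\|C\| = \|D\|$.

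The main point to watch is the boundary behaviour: a fully symmetric derived anti-norm is transparent only on nonsingular operators (Proposition \ref{prop-deriv}(b)--(c)), and the supermajorization characterization of $\|\cdot\|_!$ is awkward to push through $t \mapsto t^{-p}$ because that map is convex and may create infinite tail integrals in the asymptotically singular case. The $\eps$-shift circumvents this by confining the whole argument to bounded invertible operators and invoking the (possibly singular) limit only at the last step; I therefore expect no analytic obstruction, only the routine check that $t \mapsto (t+\eps)^{-p}$ and $t \mapsto (t+\eps)^{-1/p}$ preserve equality of spectral scales.
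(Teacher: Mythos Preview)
Your proof is correct and follows essentially the same route as the paper's. The paper condenses both directions into one sentence by first observing (via \eqref{norm-ineq} for $\|\cdot\|$ and condition $(4)_!$ for $\|\cdot\|_!$) that full symmetry need only be checked on invertible $A,B\in\cM^+$, and then invoking the reciprocal relations $\|A\|_!=\|A^{-p}\|^{-1/p}$ and $\|A\|=\|A^{-1/p}\|_!^{-p}$ on that set; your explicit $\eps$-regularization is precisely this reduction carried out by hand.
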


\begin{proof}
From \eqref{norm-ineq}, a symmetric norm on $\cM$ is fully symmetric if the condition in
Definition \ref{def-fully} holds for invertible $A,B\in\cM^+$. From condition (4)$_!$, the
same is true for a symmetric anti-norm. Hence the assertion follows from the relations
$\|A\|_!=\|A^{-p}\|^{-1/p}$ and $\|A\|=\|A^{-1/p}\|_!^{-p}$ for invertible $A\in\cM^+$.
\end{proof}

Properties (a) and (b) of Proposition \ref{full-sym-anti} are strengthened for fully
symmetric derived anti-norms as follows.
It is worth noting that $A\prec^{w(\log)}B$ is weaker than $A\prec^wB$ for
$A,B\in\overline{\cM}^+$.

\begin{prop}\label{full-sym-deriv}
Let $\|\cdot\|_!$ be a fully symmetric derived anti-norm on $\cM^+$.
\begin{itemize}
\item[(a)] For every $A,B\in\overline{\cM}^+$, $A\prec^{w(\log)}B$ implies
$\|A\|_!\ge\|B\|_!$.
\item[(b)] If $A,A_n\in\overline{\cM}^+$ and $A_n\searrow A$ in the $\tau$-measure topology
(or more weakly $\lambda_t(A_n)\searrow\lambda_t(A)$ for a.e.\ $t\in(0,1)$), then
$\|A_n\|_!\searrow\|A\|_!$. In particular, $(4)_!$ holds in $\overline{\cM}^+$.
\end{itemize}
\end{prop}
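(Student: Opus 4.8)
The plan is to use throughout that, by Lemma~\ref{lemma-fully-deriv}, $\|\cdot\|_!$ is derived from a \emph{fully symmetric} norm $\|\cdot\|$ on $\cM$ and some $p>0$, that $\|A\|_!=\|A^{-p}\|^{-1/p}$ (with $\infty^{-1/p}=0$) for nonsingular $A\in\overline{\cM}^+$ and $\|A\|_!=0$ for singular $A$ (Proposition~\ref{prop-deriv}), and the a.e.\ identity $\lambda_s(A^{-p})=\lambda_{1-s}(A)^{-p}$ valid for nonsingular $A\in\overline{\cM}^+$. For (a) with $A,B$ nonsingular, the core step is $A\prec^{w(\log)}B\Rightarrow A^{-p}\prec_w B^{-p}$: by the above identity, $A^{-p}\prec_w B^{-p}$ is equivalent to $\int_r^1\lambda_u(A)^{-p}\,du\le\int_r^1\lambda_u(B)^{-p}\,du$ for all $r\in(0,1)$, and with the non-increasing functions $\alpha:=\log\lambda(A)$, $\beta:=\log\lambda(B)$ the hypothesis reads $\int_r^1\alpha\ge\int_r^1\beta$ for all $r$ while the conclusion reads $\int_r^1(-e^{-p\alpha})\ge\int_r^1(-e^{-p\beta})$ for all $r$, i.e.\ $G\circ\alpha\prec^w G\circ\beta$ for the increasing concave $G(x):=-e^{-px}$. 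Thus (a) reduces to the fact that applying an increasing concave function pointwise preserves weak supermajorization of non-increasing functions on $(0,1)$; in this continuous setting I would prove it exactly as Proposition~\ref{full-sym-anti}(a), via a Chong-type interpolation $\alpha\ge h\prec\beta$ (after a harmless truncation making $\alpha,\beta$ bounded) together with the classical behaviour of concave functions under majorization. Then $A^{-p}\prec_w B^{-p}$ and Proposition~\ref{full-sym}(a) give $\|A^{-p}\|\le\|B^{-p}\|$, hence $\|A\|_!\ge\|B\|_!$.

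For the remaining cases of (a): if $B$ is singular then $\|B\|_!=0\le\|A\|_!$; if $A$ is singular, let $P$ be the projection onto $\ker A$ and $\delta:=\tau(P)>0$ (positive since $\tau$ is faithful). Then $\lambda_s(A)=0$ on $(1-\delta,1)$, so $\int_r^1\log\lambda_s(A)\,ds=-\infty$ for $r$ near $1$, whence $A\prec^{w(\log)}B$ forces $\int_r^1\log\lambda_s(B)\,ds=-\infty$ for all $r$. Using $e^{y}\ge1+y$ this gives $\tau(B^{-p})=\int_0^1\lambda_u(B)^{-p}\,du=\infty$, so $\|B^{-p}\|\ge\tau(B^{-p})\|I\|=\infty$ by Proposition~\ref{full-sym}(c) and therefore $\|B\|_!=0=\|A\|_!$.

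For (b): monotonicity of $\|\cdot\|_!$ (from $(3)_!$, Proposition~\ref{prop-ext-anti}) and full symmetry show that $\|A_n\|_!$ is non-increasing with $\|A_n\|_!\ge\|A\|_!$, so it suffices to prove $\lim_n\|A_n\|_!\le\|A\|_!$ under the weaker hypothesis $\lambda_t(A_n)\searrow\lambda_t(A)$ a.e.\ (to which the $\tau$-measure case reduces by continuity of the spectral scale under monotone convergence). If $A$ is nonsingular then so is every $A_n$ (as $\lambda_t(A)\le\lambda_t(A_n)$), and $\lambda_s(A_n^{-p})=\lambda_{1-s}(A_n)^{-p}\nearrow\lambda_{1-s}(A)^{-p}=\lambda_s(A^{-p})$ a.e., so the Fatou property Proposition~\ref{full-sym}(b) yields $\|A_n^{-p}\|\nearrow\|A^{-p}\|$ and hence $\|A_n\|_!\searrow\|A\|_!$ (also when $\|A^{-p}\|=\infty$). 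If $A$ is singular then $\|A\|_!=0$; either some $A_n$ is singular (then all subsequent ones are, and $\|A_n\|_!=0$ eventually), or every $A_n$ is nonsingular, in which case with $\delta:=\tau(P)>0$ ($P$ the kernel projection of $A$) we have $\lambda_u(A_n)^{-p}\nearrow\infty$ for a.e.\ $u\in(1-\delta,1)$, so monotone convergence gives $\tau(A_n^{-p})\ge\int_{1-\delta}^1\lambda_u(A_n)^{-p}\,du\to\infty$ and $\|A_n^{-p}\|\ge\tau(A_n^{-p})\|I\|\to\infty$ by Proposition~\ref{full-sym}(c), so $\|A_n\|_!\to0=\|A\|_!$. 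Finally, applying this with $A_n:=A+\eps_nI$, $\eps_n\searrow0$ (so $\lambda_t(A_n)=\lambda_t(A)+\eps_n\searrow\lambda_t(A)$), gives $(4)_!$ on $\overline{\cM}^+$.

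The main obstacle is the core step of part~(a): the continuous/operator-algebraic form of the fact that increasing concave functions preserve weak supermajorization, used together with the identity $\lambda_s(A^{-p})=\lambda_{1-s}(A)^{-p}$. The singular cases are elementary but rely on the estimate relating the Fuglede--Kadison-type tail integral $\int_r^1\log\lambda_s(\cdot)$ to $\tau(\cdot^{-p})$.
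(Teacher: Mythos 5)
Your part (b) is correct, and your treatment of the singular case there (monotone convergence giving $\tau(A_n^{-p})\to\infty$, then Proposition \ref{full-sym}\,(c)) is a valid, slightly different substitute for the paper's quantitative argument with the spectral projections $P_n=\mathbf{1}_{[0,\eps]}(A_n)$. The singular cases of (a) are also fine and match the paper's disposal of the case $\int_0^1\log\lambda_s(B)\,ds=-\infty$. The gap is in the core step of (a). You reduce $A\prec^{w(\log)}B\Rightarrow A^{-p}\prec_w B^{-p}$ to the claim that the increasing concave map $G(x)=-e^{-px}$ preserves weak supermajorization of $\alpha:=\log\lambda(A)$ and $\beta:=\log\lambda(B)$, to be proved ``after a harmless truncation making $\alpha,\beta$ bounded.'' That truncation is not harmless: bounding from below means applying $x\mapsto\max\{x,-M\}$, which is increasing \emph{convex}, and increasing convex maps preserve $\prec_w$, not $\prec^w$. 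Concretely, $\alpha=(10,-1,-2)\prec^w\beta=(1,1,-10)$, yet $\alpha\vee 0=(10,0,0)\not\prec^w\beta\vee 0=(1,1,0)$ (the two-term tail sums are $0$ and $1$). Since $A,B$ are assumed only nonsingular, $\alpha$ and $\beta$ really can be unbounded below --- exactly when $B$ is not invertible --- so you cannot pass to the bounded $L^\infty$ Chong interpolation of Proposition \ref{full-sym-anti}\,(a) in the way you indicate.

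This unbounded-below case is precisely where the paper's proof does its real work: after the $\int_0^1\log\lambda_s(B)\,ds=-\infty$ case, it replaces $B$ by an invertible majorant $\hat B=\int_0^r\lambda_s(B)\,dF_s+\int_r^1\lambda_r(B)\,dF_s$ with $r$ chosen so that $e^\delta A\prec^{w(\log)}\hat B$ and $\lambda(\hat B)\ge\lambda(B)$, applies the bounded/invertible case (via \cite[Proposition 1.2]{HN1}, which is your function-level fact for bounded $\alpha,\beta$) to $e^\delta A$ and $\hat B$, uses Proposition \ref{full-sym-anti}\,(a) to pass from $\hat B$ back to $B$, and lets $\delta\searrow 0$. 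To repair your argument you must either (i) truncate only $\alpha$ from below (which does preserve the hypothesis, since it only increases the tail integrals of $\alpha$) and then invoke $L^1$ rather than $L^\infty$ versions of the Chong interpolation and of the ``concave preserves $\prec^w$'' lemma for the still-unbounded $\beta$, or (ii) adopt the paper's $\hat B$ construction. As written, the step is not justified.
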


\begin{proof}
From Lemma \ref{lemma-fully-deriv}, let $\|\cdot\|_!$ be derived from a fully symmetric
norm $\|\cdot\|$ and a $p>0$. 

(a)\enspace
Assume that $A\prec^{w(\log)}B$. Since this implies that $A\wedge s\prec^{w(\log)}B\wedge s$
for all $s>0$ (similarly to the assertion for $\prec^w$ in the proof of Proposition \ref{full-sym-anti}, by considering the function $\log(e^x\wedge s)$), it is enough to assume that $A,B\in\cM^+$. Furthermore, by replacing $A$ with
$A+\eps I$ for any $\eps>0$, $A$ may be assumed invertible. First, assume that
$\int_0^1\log\lambda_s(B)\,ds=-\infty$, and we prove that $\|B\|_!=0$ for every derived
anti-norm $\|\cdot\|_!$. By Proposition \ref{full-sym}\,(c) it suffices to prove this for
the derived anti-norm from $\|\cdot\|_1$ and any $p>0$. So we may show that
$\lim_{\eps\searrow0}\|(B+\eps I)^{-p}\|_1=\infty$.
Since $\int_0^1\log\lambda_s(B)\,ds=-\infty$ implies that $\lambda_1(B)=0$, we have
$-\log\lambda_s(B)\le\lambda_s(B)^{-p}$ for all $s$ sufficiently near $1$. Hence by
\eqref{deriv-form} for $t=1$,
$$
\lim_{\eps\searrow0}\|(B+\eps I)^{-p}\|_1=\int_0^1\lambda_s(B)^{-p}\,ds=\infty.
$$

Next assume that $B$ as well as $A$ is invertible. Then $A\prec^{w(\log)}B$ means
$\log\lambda(A)\prec^w\log\lambda(B)$. For every $p>0$, since
$\log\lambda(A^{-p})\prec_w\log\lambda(B^{-p})$, we have $A^{-p}\prec_wB^{-p}$ by
\cite[Proposition 1.2]{HN1}. Hence by Proposition \ref{full-sym}\,(a),
$\|A^{-p}\|\le\|B^{-p}\|$
for any fully symmetric norm $\|\cdot\|$, implying the assertion.

Finally, assume that $B$ is not invertible but $\int_0^1\log\lambda_s(B)\,ds>-\infty$, so
$\lambda_s(B)>0$ for all $s\in(0,1)$ and $\log\lambda(B)$ is integrable on $(0,1)$.
One can fix an $r_0\in(0,1)$ such that $\lambda_{r_0}(B)\le\lambda_1(A)$ (since $A$ is
assumed invertible while $B$ is not). For every $\delta>0$ there exists an $r\in(r_0,1)$
such that
$$
\int_r^1(\log\lambda_r(B)-\log\lambda_s(B))\,ds\le\delta(1-r_0),
$$
so we define
$$
\hat B:=\int_0^r\lambda_s(B)\,dF_s+\int_r^1\lambda_r(B)\,dF_s,
$$
which is invertible. If $t\in(r_0,1)$, then
$$
\int_t^1(\log\lambda_s(A)+\delta)\,ds\ge\int_t^1\log\lambda_s(A)\,ds
\ge\int_t^1\log\lambda_{r_0}(B)\,ds\ge\int_t^1\log\lambda_s(\hat B)\,ds.
$$
If $t\in(0,r_0]$, then
\begin{align*}
\int_t^1(\log\lambda_s(A)+\delta)\,ds&\ge\int_t^1\log\lambda_s(B)\,ds+\delta(1-r_0) \\
&=\int_t^1\log\lambda_s(\hat B)\,ds-\int_r^1(\log\lambda_r(B)-\log\lambda_s(B))\,ds
+\delta(1-r_0) \\
&\ge\int_t^1\log\lambda_s(\hat B)\,ds.
\end{align*}
The above estimates imply that $e^\delta A\prec^{w(\log)}\hat B$. Since $\hat B$ is
invertible,
$\|(e^\delta A)^{-p}\|\le\|\hat B^{-p}\|$ as in the previous case. Therefore,
$$
e^\delta\|A^{-p}\|^{-1/p}\ge\|\hat B^{-p}\|^{-1/p}=\|\hat B\|_!\ge\|B\|_!
$$
by Proposition \ref{full-sym-anti}\,(a) since $\lambda(\hat B)\ge\lambda(B)$. Letting
$\delta\searrow0$ gives $\|A\|_!\ge\|B\|_!$.

(b)\enspace
If $\ker A\ne\{0\}$, then there is a $\delta\in(0,1)$ such that $\lambda_{1-\delta}(A)=0$.
Since $\lambda_{1-\delta/2}(A_n)\searrow0$, for every $\eps>0$ there exists an $n_0\in\bN$
such that $\lambda_{1-\delta/2}(A_n)<\eps$ for all $n\ge n_0$. For each $n\ge n_0$, letting
$P_n:=\mathbf{1}_{[0,\eps]}(A_n)$ we have $\tau(P_n)\ge\delta/2$ and
$(A_n+\eps I)^{-p}\ge(2\eps)^{-p}P_n$. Therefore,
$$
\|(A_n+\eps I)^{-p}\|\ge(2\eps)^{-p}\|P_n\|\ge(2\eps)^{-p}\|I\|\delta/2
$$
so that $\|A_n\|_!\le2\eps(\|I\|\delta/2)^{-1/p}$ for all $n\ge n_0$. Hence
$\|A_n\|_!\searrow0$.

Next assume that $\ker A=\{0\}$, so $A^{-p}\in\overline{\cM}^+$. Since
$$
\lambda_t(A_n^{-p})=\lambda_{1-t}(A_n)^{-p}
\nearrow\lambda_{1-t}(A)^{-p}=\lambda_t(A^{-p})
$$
for a.e.\ $t\in(0,1)$, we have $\|A_n^{-p}\|\nearrow\|A^{-p}\|$ by Proposition
\ref{full-sym}\,(b). Therefore, $\|A_n\|_!\searrow\|A\|_!$ by Proposition
\ref{prop-deriv}\,(c).
\end{proof}

The next theorem is concerned with the converse implication of Proposition
\ref{full-sym-deriv}\,(a).

\begin{theorem}\label{theorem-full-deriv}
Let $A,B\in\overline{\cM}^+$ and assume that $\int_0^1\lambda_s(B)^{-p}\,ds<\infty$ for
some $p>0$ (in particular, this is the case if $B\ge\delta I$ for some $\delta>0$). Then
the following two conditions are equivalent:
\begin{itemize}
\item[(i)] $A\prec^{w(\log)}B$;
\item[(ii)] $\|A\|_!\ge\|B\|_!$ for every fully symmetric derived anti-norm $\|\cdot\|_!$ on $\cM^+$.
\end{itemize}
\end{theorem}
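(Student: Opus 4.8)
The implication (i)$\Rightarrow$(ii) is exactly Proposition \ref{full-sym-deriv}\,(a), so the content of the theorem lies in the reverse direction (ii)$\Rightarrow$(i). The plan is to prove the contrapositive: assuming $A\not\prec^{w(\log)}B$, I will construct a single fully symmetric derived anti-norm $\|\cdot\|_!$ for which $\|A\|_!<\|B\|_!$. By the hypothesis on $B$ and by monotonicity (Lemma \ref{monot}), the derived anti-norms from $\|\cdot\|_1$ with exponent $p$ satisfy $\|B\|_!=\bigl(\int_0^1\lambda_s(B)^{-p}\,ds\bigr)^{-1/p}>0$, so the candidate anti-norms are genuinely nontrivial; similarly $\|A\|_!$ can be computed via \eqref{deriv-form} (with $t=1$) once we pass to truncations. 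The natural family to test against is precisely the anti-norms of the form \eqref{deriv-form} coming from the Ky Fan norms $\|\cdot\|_{(t)}$, i.e. $\|C\|_!=\bigl(\int_{1-t}^1\lambda_s(C)^{-p}\,ds\bigr)^{-1/p}$, since these are fully symmetric (Example \ref{Exam}\,(3)) and, as $t$ ranges over $(0,1]$ and $p$ over $(0,\infty)$, they should be enough to detect any failure of log-supermajorization.

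First I would reduce to bounded, invertible operators: replace $A$ by $A\wedge n$ and use $\|A\|_!=\lim_n\|A\wedge n\|_!$ (as in the proof of Theorem \ref{theorem-1}) together with the fact, proved in Proposition \ref{full-sym-deriv}\,(b), that the relevant anti-norms behave well under such truncations; a negation of $A\prec^{w(\log)}B$ survives at some finite truncation level because $\int_t^1\log\lambda_s(A\wedge n)\,ds\nearrow\int_t^1\log\lambda_s(A)\,ds$. Replacing $A$ by $A+\eps I$ makes it invertible, and since $\int_0^1\lambda_s(B)^{-p}\,ds<\infty$ forces $\lambda_s(B)>0$ for a.e.\ $s$, $B$ may likewise be assumed bounded and bounded below. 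Now $A\not\prec^{w(\log)}B$ means there is a $t_0\in(0,1)$ with $\int_{t_0}^1\log\lambda_s(A)\,ds<\int_{t_0}^1\log\lambda_s(B)\,ds$, i.e. $\int_0^{1-t_0}\log\lambda_{1-u}(A)\,du<\int_0^{1-t_0}\log\lambda_{1-u}(B)\,du$. Writing $a(u):=\lambda_{1-u}(A)^{-1}$ and $b(u):=\lambda_{1-u}(B)^{-1}$ (nonincreasing, positive, bounded functions on $(0,1)$, since $\lambda_{1-u}$ is nondecreasing in $u$), this says $\int_0^{r}\log a\,du>\int_0^{r}\log b\,du$ for $r=1-t_0$.

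The key step is then a scalar/commutative lemma: if $a,b$ are nonincreasing positive functions on $(0,1)$ with $\int_0^r\log a>\int_0^r\log b$ for some $r\in(0,1)$, then for $p$ large enough $\int_0^r a^p<\int_0^r b^p$ — equivalently $\bigl(\int_0^r a^p\bigr)^{1/p}<\bigl(\int_0^r b^p\bigr)^{1/p}$ fails to hold, giving $\|A\|_!<\|B\|_!$ for the derived anti-norm \eqref{deriv-form} with $t=r$ and this large $p$. The mechanism is that $\bigl(\frac1r\int_0^r a^p\,du\bigr)^{1/p}\to\exp\bigl(\frac1r\int_0^r\log a\,du\bigr)$ as $p\to\infty$ (the continuous $L^p\to L^\infty$-type limit, via dominated convergence applied to $\log\bigl(\frac1r\int_0^r a^p\bigr)^{1/p}$ and Jensen / concavity of $\log$ to control the approach), and symmetrically for $b$; since the two limits are strictly ordered the wrong way, the inequality $\int_0^r a^p<\int_0^r b^p$ must hold for all sufficiently large $p$. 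I expect the main obstacle to be bookkeeping at the reduction stage — ensuring that the strict inequality in log-supermajorization is preserved after the two successive approximations (truncation $A\mapsto A\wedge n$ and regularization $A\mapsto A+\eps I$), and that $\|B\|_!$ stays bounded away from $0$ and $\|A\|_!$ stays finite throughout, so that the final strict comparison is not vacuous. Once one is on bounded invertible operators this is routine monotone-convergence, and the scalar limit lemma is the genuine analytic core; the rest is assembling it with Proposition \ref{full-sym-deriv} and the identity $\|A\|_!=\|A^{-p}\|^{-1/p}$ from Proposition \ref{prop-deriv}\,(c).
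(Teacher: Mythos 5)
The implication (i)$\Rightarrow$(ii) and your choice of test family (the derived anti-norms $\bigl(\int_{1-t}^1\lambda_s(\cdot)^{-p}\,ds\bigr)^{-1/p}$ coming from the Ky Fan norms $\|\cdot\|_{(t)}$) match the paper, as does the reduction to bounded operators by truncating at $\lambda_{1-t}$ and regularizing $A$ by $A+\delta I$. But the analytic core of your argument is wrong: the limit of $\bigl(\frac1r\int_0^r a^p\,du\bigr)^{1/p}$ as $p\to\infty$ is $\operatorname{ess\,sup}_{(0,r)}a$, \emph{not} the geometric mean $\exp\bigl(\frac1r\int_0^r\log a\,du\bigr)$; the geometric mean is the limit as $p\searrow0$. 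With $p\to\infty$ your scalar lemma is simply false. For a counterexample with nonincreasing positive functions, take $r=1$, $a=2$ on $(0,\tfrac1{10})$ and $a=1$ elsewhere, $b=3$ on $(0,\tfrac1{100})$ and $b=1$ elsewhere: then $\int_0^1\log a=\tfrac{\log 2}{10}>\tfrac{\log 3}{100}=\int_0^1\log b$, yet for large $p$ one has $\int_0^1 a^p\approx\tfrac{2^p}{10}\ll\tfrac{3^p}{100}\approx\int_0^1 b^p$, so the strict ordering of the geometric means does not transfer to large $p$. (There is also a sign slip in your statement of the lemma: to get $\|A\|_!<\|B\|_!$ you need $\int_0^r a^p>\int_0^r b^p$, since $x\mapsto x^{-1/p}$ is decreasing.)

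The fix is exactly the paper's Lemma \ref{lemma-limit}: for $A$ with $\int_0^1\lambda_s(A)^{-p}\,ds<\infty$ for some $p>0$,
$$
\exp\biggl(\frac1t\int_{1-t}^1\log\lambda_s(A)\,ds\biggr)
=\lim_{p\searrow0}\biggl(\frac1t\int_{1-t}^1\lambda_s(A)^{-p}\,ds\biggr)^{-1/p},
$$
proved by differentiating $p\mapsto\int_{1-t}^1\lambda_s(A)^{-p}\,ds$ at $p=0^+$ via dominated convergence. With this, the paper argues the direct (not contrapositive) implication: from $\|(A+\delta I)^{-p}\|_{(t)}^{-1/p}\ge\|A^{-p}\|_{(t)}^{-1/p}\ge\|B^{-p}\|_{(t)}^{-1/p}$ for all $p>0$, letting $p\searrow0$ gives $\int_{1-t}^1\log(\lambda_s(A)+\delta)\,ds\ge\int_{1-t}^1\log\lambda_s(B)\,ds$, and then $\delta\searrow0$. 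Note that the hypothesis $\int_0^1\lambda_s(B)^{-p}\,ds<\infty$ is needed precisely to make the $p\searrow0$ limit for $B$ valid without regularization (Remark \ref{R-3.14} shows the theorem fails without it), a point your $p\to\infty$ scheme does not engage with. So the proposal is not correct as written; replacing your limit $p\to\infty$ by $p\searrow0$ and the ``$L^p\to L^\infty$'' heuristic by the ``$L^p\to$ geometric mean as $p\to0$'' lemma recovers the paper's proof.
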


To prove the theorem, we first give a lemma. When $A$ is invertible, the lemma is
\cite[Lemma 4.3.6]{Ar} with a simpler proof.

\begin{lemma}\label{lemma-limit}
Let $A\in\cM^+$ and assume that $\int_0^1\lambda_s(A)^{-p}\,ds<\infty$ for some $p>0$.
Then, for every $t\in(0,1]$,
$$
\exp\biggl({1\over t}\int_{1-t}^1\log\lambda_s(A)\,ds\biggr)
=\lim_{p\searrow0}\biggl({1\over t}\int_{1-t}^1\lambda_s(A)^{-p}\,ds\biggr)^{-1/p}
=\lim_{p\searrow0}t^{1/p}\|A^{-p}\|_{(t)}^{-1/p},
$$
where $\|\cdot\|_{(t)}$ is in Example \ref{Exam}\,(1).
\end{lemma}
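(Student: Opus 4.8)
The plan is to establish the two equalities by recognizing both limits as instances of the familiar fact that the $L^p$-mean of a positive function on a probability space decreases to the geometric mean as $p \searrow 0$. Concretely, on the interval $(1-t,1)$ equipped with the normalized measure $t^{-1}\,ds$, consider the function $s \mapsto \lambda_s(A)^{-1}$. By hypothesis $\int_{1-t}^1 \lambda_s(A)^{-p_0}\,ds < \infty$ for some $p_0 > 0$, and since $\lambda_s(A)$ is non-increasing, $\lambda_s(A) \ge \lambda_1(A)$ on $(1-t,1)$ is bounded below only when $A$ is invertible; in general $\lambda_s(A)^{-1}$ may be unbounded but lies in $L^{p_0}$ of the normalized measure. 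This is exactly the setting where the $L^p$-to-geometric-mean limit applies.

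First I would record the auxiliary inequalities. For $0 < p \le p_0$ one has $\bigl(t^{-1}\int_{1-t}^1 \lambda_s(A)^{-p}\,ds\bigr)^{1/p}$ is non-decreasing in $p$ (monotonicity of $L^p$-norms on a probability space), so the limit as $p \searrow 0$ exists in $[0,\infty)$. For the lower bound on the limit, Jensen's inequality applied to the concave function $\log$ gives
$$
\frac{1}{p}\log\Bigl(\frac{1}{t}\int_{1-t}^1 \lambda_s(A)^{-p}\,ds\Bigr)
\ge \frac{1}{t}\int_{1-t}^1 \log\lambda_s(A)^{-1}\,ds,
$$
i.e.\ $\bigl(t^{-1}\int_{1-t}^1 \lambda_s(A)^{-p}\,ds\bigr)^{-1/p} \le \exp\bigl(t^{-1}\int_{1-t}^1 \log\lambda_s(A)\,ds\bigr)$ for every $p$; note $\int_{1-t}^1 \log \lambda_s(A)\,ds$ may be $-\infty$, in which case the right side is $0$ and there is nothing to prove, so we may assume $\log\lambda(A)$ is integrable on $(1-t,1)$. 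For the matching upper bound on the limit (equivalently, the limsup is $\le$ the geometric mean), I would use the elementary pointwise estimate $x^{-p} \le 1 + (-\log x)\,p\cdot x^{-p_0}/c$ valid for $0 < p \le p_0$ with a suitable constant, or more cleanly write $x^{-p} = \exp(-p\log x)$ and split the integral over $\{\lambda_s(A) \ge 1\}$ and $\{\lambda_s(A) < 1\}$; on the first set $\lambda_s(A)^{-p} \nearrow 1$ and on the second $\lambda_s(A)^{-p} \nearrow \infty$ is dominated by $\lambda_s(A)^{-p_0}$ which is integrable, so the dominated/monotone convergence theorem gives $t^{-1}\int_{1-t}^1 \lambda_s(A)^{-p}\,ds \to 1$ would be wrong in general — rather, the standard argument is: by dominated convergence $\frac{1}{p}(\lambda_s(A)^{-p}-1) \to -\log\lambda_s(A)$ pointwise with domination $\le \max\{0, (\lambda_s(A)^{-p_0}-1)/p_0\} + |\log\lambda_s(A)|$ which is integrable, hence $\frac{1}{p}\bigl(t^{-1}\int_{1-t}^1\lambda_s(A)^{-p}\,ds - 1\bigr) \to -t^{-1}\int_{1-t}^1 \log\lambda_s(A)\,ds$, and exponentiating via $(1+pu)^{1/p} \to e^u$ gives $\bigl(t^{-1}\int_{1-t}^1\lambda_s(A)^{-p}\,ds\bigr)^{1/p} \to \exp\bigl(-t^{-1}\int_{1-t}^1\log\lambda_s(A)\,ds\bigr)$, which is the first claimed equality after inverting.

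The second equality is then immediate bookkeeping: by definition in Example \ref{Exam}\,(1), $\|A^{-p}\|_{(t)} = \int_0^t \mu_s(A^{-p})\,ds = \int_0^t \lambda_s(A^{-p})\,ds$, and since $\lambda_s(A^{-p}) = \lambda_{1-s}(A)^{-p}$ (as $A \mapsto A^{-p}$ reverses the order of the spectral scale), the substitution $s \mapsto 1-s$ yields $\|A^{-p}\|_{(t)} = \int_{1-t}^1 \lambda_s(A)^{-p}\,ds$. Hence $t^{1/p}\|A^{-p}\|_{(t)}^{-1/p} = \bigl(t^{-1}\int_{1-t}^1 \lambda_s(A)^{-p}\,ds\bigr)^{-1/p}$, identifying the middle and right-hand expressions.

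I expect the only real subtlety to be the handling of the case where $\lambda_s(A)^{-1}$ is unbounded near $s=1$ (i.e.\ $A$ not bounded below): there one must genuinely invoke the integrability hypothesis $\int_0^1\lambda_s(A)^{-p_0}\,ds<\infty$ to supply an integrable dominating function for the convergence $\frac{1}{p}(\lambda_s(A)^{-p}-1)\to -\log\lambda_s(A)$, and one must separately dispose of the degenerate subcase $\int_{1-t}^1\log\lambda_s(A)\,ds=-\infty$ where the limit is $0$ and monotonicity of $L^p$-means already forces the $L^p$-means to decrease to $0$. The invertible case, as the authors note by citing \cite[Lemma 4.3.6]{Ar}, needs none of this care.
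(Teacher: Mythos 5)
Your proof is correct and follows essentially the same route as the paper's: both arguments differentiate $p\mapsto\int_{1-t}^1\lambda_s(A)^{-p}\,ds$ at $p=0^+$ by applying dominated convergence to the difference quotients $\frac{1}{p}(\lambda_s(A)^{-p}-1)$, with the hypothesis $\int_0^1\lambda_s(A)^{-p_0}\,ds<\infty$ supplying the integrable dominating function, and then exponentiate. The Jensen upper bound, the monotonicity of $L^p$-means, and the discussion of the case $\int_{1-t}^1\log\lambda_s(A)\,ds=-\infty$ (which in fact cannot occur under the stated hypothesis, since $-\log x\le p_0^{-1}(x^{-p_0}-1)$) are superfluous but harmless.
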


\begin{proof}
Replacing $A$ with $\alpha A$ for some $\alpha>0$, we may suppose that
$A\le I$. Assume that $\int_0^1\lambda_s(A)^{-p_0}\,ds<\infty$ for some $p_0>0$; then
$0<\lambda_s(A)\le1$ for all $s\in(0,1)$ and $-\lambda_s(A)^{-p}\log\lambda_s(A)$ is
integrable on $(0,1)$ for every
$p\in(0,p_0)$. Write $\ffi(s,p):=\lambda_s(A)^{-p}$ for $s\in(0,1)$ and $p\in(0,p_0)$.
Since
$$
{\ffi(s,p)-\ffi(s,0)\over p}=\partial_p\ffi(s,\theta p)
=-\lambda_s(A)^{-\theta p}\log\lambda_s(A)\le-\lambda_s(A)^{-p}\log\lambda_s(A),
$$
where $\theta\in(0,1)$ (depending on $s,p$) and
$$
\lim_{p\searrow0}{\ffi(s,p)-\ffi(s,0)\over p}=\partial_p\ffi(s,0)=-\log\lambda_s(A),
$$
it follows from the Lebesgue convergence theorem that, for every $t\in(0,1]$,
$$
{d\over dp}\int_{1-t}^1\ffi(s,p)\,ds\bigg|_{p=+0}
=\lim_{p\searrow0}\int_{1-t}^1{\ffi(s,p)-\ffi(s,0)\over p}\,ds
=-\int_{1-t}^1\log\lambda_s(A)\,ds,
$$
where ${d\over dp}(\cdot)\big|_{p=+0}$ means the right derivative at $p=0$. Therefore,
\begin{align*}
\lim_{p\searrow0}\biggl[-{1\over p}\log
\biggl({1\over t}\int_{1-t}^1\lambda_s(A)^{-p}\,ds\biggr)\biggr]
&=-{d\over dp}\log\biggl({1\over t}\int_{1-t}^1\lambda_s(A)^{-p}\,ds\biggr)
\bigg|_{p=+0} \\
&={1\over t}\int_{1-t}^1\log\lambda_s(A)\,ds,
\end{align*}
which is equivalent to the desired limit formula.
\end{proof}

We now prove the theorem.

\begin{proof} (Theorem \ref{theorem-full-deriv})\quad
By Proposition \ref{full-sym-deriv}\,(a) we may prove that (ii) $\Rightarrow$ (i), so assume
that $\int_0^1\lambda_s(B)^{-p}\,ds<\infty$ for some $p>0$ and $\|A\|_!\ge\|B\|_!$ for all
fully symmetric derived anti-norms. It suffices to show that, for each $t\in(0,1)$ fixed,
if $\|A^{-p}\|_{(t)}^{-1/p}\ge\|B^{-p}\|_{(t)}^{-1/p}$ for all $p>0$, then
$\int_{1-t}^1\log\lambda_s(A)\,ds\ge\int_{1-t}^1\log\lambda_s(B)\,ds$ holds. Since all the
relevant quantities depend on $\lambda(A),\lambda(B)$ restricted on $(1-t,1)$, we may assume
that $A,B\in\cM^+$, by replacing $A$, $B$ with $A\wedge\alpha$, $B\wedge\beta$ where
$\alpha:=\lambda_{1-t}(A)$, $\beta:=\lambda_{1-t}(B)$, respectively. Then for every
$\delta>0$ we have
$\|(A+\delta I)^{-p}\|_{(t)}^{-1/p}\ge\|A^{-p}\|_{(t)}^{-1/p}\ge\|B^{-p}\|_{(t)}^{-1/p}$.
Applying Lemma \ref{lemma-limit} to $A+\delta I$ and $B$ yields
$$
\int_{1-t}^1\log(\lambda_s(A)+\delta)\,ds\ge\int_{1-t}^1\log\lambda_s(B)\,ds.
$$
Letting $\delta\searrow0$ gives the desired inequality.
\end{proof}

\begin{remark}\label{R-3.14}
Note that there is a $B\in\cM^+$ such that $\int_0^1\log\lambda_s(B)\,ds>-\infty$ but
$\int_0^1\lambda_s(B)^{-p}\,ds=\infty$ for all $p>0$. For instance, this is the case when
$\lambda_s(B)=\exp\bigl(-1/\sqrt{1-s}\bigr)$. For such a $B\in\cM^+$ and every $p>0$, if
$\|\cdot\|$ is a fully symmetric norm on $\cM^+$, then by Proposition \ref{full-sym}\,(c)
we have
$$
\lim_{\eps\searrow0}\|(B+\eps I)^{-p}\|
\ge\lim_{\eps\searrow0}\|(B+\eps I)^{-p}\|_1\|I\|
=\int_0^1\lambda_s(B)^{-p}\,ds\,\|I\|=\infty.
$$
This means that $\|B\|_!=0$ for every fully symmetric derived anti-norm, so (ii) of Theorem
\ref{theorem-full-deriv} is satisfied for any $A\in\cM^+$. Therefore, (ii) $\Rightarrow$ (i)
does not hold for general $A,B\in\cM^+$. Such a subtle difference between the two conditions
(i) and (ii) never occurs in the matrix case: In the matrix algebra $\bM_n$, the conditions (i) and (ii) in Theorem \ref{theorem-full-deriv} are equivalent; (i) $\Rightarrow$ (ii) is shown in \cite[Lemma 4.10]{BH2}, and (ii) $\Rightarrow$ (i) is implicit in \cite[Example 4.5]{BH2}, the discrete version of the anti-norms in \eqref{F-3.7}.
\end{remark}

\vskip 5pt
The following is a consequence of Lemma \ref{lemma-limit}.

\begin{cor}\label{Delta}
For every $t\in(0,1]$ the functional
\begin{equation}\label{F-3.7}
\Delta_t(A):=\exp\biggl({1\over t}\int_{1-t}^1\log\lambda_s(A)\,ds\biggr),
\qquad A\in\cM^+
\end{equation}
is a symmetric anti-norm on $\cM^+$.
\end{cor}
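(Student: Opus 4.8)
The plan is to realize $\Delta_t$ as a pointwise limit, over $p\searrow0$, of rescaled derived anti-norms, but only after shifting the argument away from $0$; the shift is exactly what makes Lemma \ref{lemma-limit} applicable, and condition $(4)_!$ is then used to descend back to an arbitrary $A\in\cM^+$.

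First I would dispatch the easy axioms. Since $\lambda_s(\alpha A)=\alpha\lambda_s(A)$ and $\lambda_s(UAU^*)=\lambda_s(A)$ for $\alpha\ge0$ and every unitary $U$, conditions $(1)_!$ and $(2)_!$ are immediate from the definition \eqref{F-3.7} (with the convention $\exp(-\infty)=0$), and $\Delta_t(A)\le\lambda_0(A)=\|A\|_\infty<\infty$ (because $\lambda_s$ is non-increasing) shows $\Delta_t$ is $[0,\infty)$-valued. For $(4)_!$, use $\lambda_s(A+\eps I)=\lambda_s(A)+\eps$: then $s\mapsto\log(\lambda_s(A)+\eps)$ decreases to $s\mapsto\log\lambda_s(A)$ as $\eps\searrow0$ and is bounded above by $\log(\|A\|_\infty+\eps_0)$ for $\eps\le\eps_0$, so the decreasing monotone convergence theorem gives $\tfrac1t\int_{1-t}^1\log(\lambda_s(A)+\eps)\,ds\searrow\tfrac1t\int_{1-t}^1\log\lambda_s(A)\,ds$ in $[-\infty,\infty)$; exponentiating yields $\Delta_t(A+\eps I)\searrow\Delta_t(A)$. (In particular $\Delta_t$ is monotone, though this is not needed below.)

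The substance is the superadditivity $(3)_!$. For $p>0$ let $N_p(A):=\bigl(\int_{1-t}^1\lambda_s(A)^{-p}\,ds\bigr)^{-1/p}$ be the derived anti-norm from the Ky-Fan-type norm $\|\cdot\|_{(t)}$ and $p$ as in Example \ref{Exam}\,(3); by Theorem \ref{theorem-deriv} it is a symmetric anti-norm, and hence so is $t^{1/p}N_p$, since multiplication by a positive constant preserves $(1)_!$--$(4)_!$. Fix $A,B\in\cM^+$ and $\delta>0$, and put $A_\delta:=A+\delta I$, $B_\delta:=B+\delta I$. Because $\lambda_s(A_\delta)\ge\delta$ we have $\int_0^1\lambda_s(A_\delta)^{-p}\,ds\le\delta^{-p}<\infty$, and likewise for $B_\delta$ and for $A_\delta+B_\delta=A+B+2\delta I$; hence Lemma \ref{lemma-limit} applies to all three operators and gives
\[
\Delta_t(A_\delta)=\lim_{p\searrow0}t^{1/p}N_p(A_\delta),\qquad
\Delta_t(B_\delta)=\lim_{p\searrow0}t^{1/p}N_p(B_\delta),\qquad
\Delta_t(A_\delta+B_\delta)=\lim_{p\searrow0}t^{1/p}N_p(A_\delta+B_\delta).
\]
Applying $t^{1/p}N_p(A_\delta+B_\delta)\ge t^{1/p}N_p(A_\delta)+t^{1/p}N_p(B_\delta)$ and letting $p\searrow0$ (all three limits exist, so the inequality survives) gives $\Delta_t(A+B+2\delta I)\ge\Delta_t(A+\delta I)+\Delta_t(B+\delta I)$. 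Letting $\delta\searrow0$ and invoking the already-proved $(4)_!$ on each of the three terms yields $\Delta_t(A+B)\ge\Delta_t(A)+\Delta_t(B)$.

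The main obstacle is precisely that Lemma \ref{lemma-limit} cannot be applied directly to $A$, $B$, $A+B$: by Remark \ref{R-3.14} there is a $C\in\cM^+$ with $\int_{1-t}^1\lambda_s(C)^{-p}\,ds=\infty$ for every $p>0$, so $N_p(C)=0$ for all $p$, whereas $\Delta_t(C)>0$; thus the naive approximation $\Delta_t=\lim_p t^{1/p}N_p$ genuinely fails on $\cM^+$. Inserting $\delta I$ before the limit in $p$ and removing it afterwards through $(4)_!$ is the device that circumvents this. Beyond that, the only things to verify carefully are the identity $\lambda_s(\,\cdot+\delta I)=\lambda_s(\,\cdot\,)+\delta$ (a standard translation property of the spectral scale) and the fact that $t^{1/p}N_p$ really is a symmetric anti-norm, both of which are routine.
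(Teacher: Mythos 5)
Your proof is correct and follows essentially the same route as the paper: the easy axioms plus $(4)_!$ via monotone convergence, then superadditivity by reducing to invertible operators (your $\delta$-shift is exactly the paper's "we may assume in view of $(4)_!$ that $A,B$ are invertible"), applying Lemma \ref{lemma-limit} to express $\Delta_t$ as the $p\searrow0$ limit of the rescaled derived anti-norms $t^{1/p}\|\cdot^{-p}\|_{(t)}^{-1/p}$, and passing the superadditivity of those anti-norms through the limit. The extra care you take in checking that the shifted operators satisfy the integrability hypothesis of Lemma \ref{lemma-limit} is a worthwhile detail the paper leaves implicit.
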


\begin{proof}
The properties (1)$_!$ and (2)$_!$ of Definition \ref{Def-1} for $\Delta_t$ are clear and
(4)$_!$ is immediate from the monotone convergence theorem. To show (3)$_!$, we may assume
in view of (4)$_!$ that $A,B\in\cM^+$
are invertible. Then Lemma \ref{lemma-limit} yields
\begin{equation}\label{limit-form}
\Delta_t(A)=\lim_{p\searrow0}t^{-1/p}\|A^{-p}\|_{(t)}^{-1/p}
\end{equation}
and the same expressions for $\Delta_t(B)$ and $\Delta_t(A+B)$. Hence (3)$_!$ for $\Delta_t$
follows from that of the derived anti-norms $\|A^{-p}\|_{(t)}^{-1/p}$.
\end{proof}

The symmetric anti-norms $\Delta_t$ are not derived ones, but \eqref{limit-form} says that
they are in the boundary of the derived anti-norms. In particular, when $t=1$,
$$
\Delta(X):=\exp\biggl(\int_0^1\log\lambda_s(|X|)\,ds\biggr)=\Delta_1(|X|),
\qquad X\in\cM
$$
is the {\it Fuglede-Kadison determinant} \cite{FuKa}. This is extended to $\overline{\cM}$
by Proposition \ref{prop-ext-anti} and the above expression holds whenever
$\int_0^1\log\lambda_s(|X|)\,ds$ makes sense permitting $\pm\infty$.
The determinant $\Delta$ has been useful in the non-commutative $H^\infty$ theory
(e.g., \cite{Ar,BleLab}).

\section{Superadditivity with more functions}

For a fully symmetric derived anti-norm, Theorem \ref{theorem-1} for convex functions can
be extended to a considerably larger class of superadditive functions given as follows:

\vskip 5pt\noindent
{\it Let $\cS$ be the class of functions $\psi:[0,\infty)\to[0,\infty)$ such that
$
\psi(t)=f \circ g (t)
$
for some superadditive log-concave function $f:[0,\infty)\to[0,\infty)$ and some
superadditive convex function $g:[0,\infty)\to[0,\infty)$.
}

\vskip 5pt
Recall that $f:[0,\infty)\to [0,\infty)$ is log-concave if 
$
f\left((a+b)/2\right) \ge \sqrt{f(a)f(b)}
$
for all $a,b\ge 0$, i.e., if $\log f:[0,\infty)\to[-\infty,\infty)$ is concave. A convex
function $g:[0,\infty)\to [0,\infty)$ is superadditive if and only if $g(0)=0$. Note that
any function $\psi(t)$ in $\cS$ is superadditive and non-decreasing on $[0,\infty)$ with
$\psi(0)=0$.

Any superadditive log-concave function $\psi:[0,\infty)\to[0,\infty)$ is in $\mathcal{S}$. 
Any convex function  $\psi:[0,\infty)\to[0,\infty)$ with $\psi(0)=0$ is in $\mathcal{S}$.
The next  examples  point out some functions in ${\mathcal{S}}$  which are in the intersection of these two subclasses, or only in one subclass, or in none of them. We implicitly assume that superadditive functions are defined on $[0,\infty)$.

\vskip 5pt\noindent
\begin{example} \begin{itemize} \item The power functions $t\mapsto t^p$, $p\ge 1$, and
the angle function at any $\alpha >0$, $t\mapsto(t-\alpha)_+:=\max\{t-\alpha,0\}$,
are superadditive, convex and log-concave. The function $t\mapsto t\arctan t$
is also superadditive, convex and log-concave.
\item
For any $\gamma >1$, the functions $t\mapsto \sinh t^{\gamma}$ and
$t\mapsto t\exp t^{\gamma}$ are superadditive and convex, but not log-concave.
\item
When $1\le \alpha < \beta$, the function $t\mapsto \min\{ t^{\alpha}, t^{\beta}\}$ is
superadditive and log-concave, but not convex. The function
$t\mapsto t^{\alpha} \exp(-1/t^\beta)$ is the same whenever $\alpha\ge1$ and
$\beta>2\alpha-1+2\sqrt{\alpha(\alpha-1)}$.
When $0<a<b$, the function $t\mapsto(t-a) {\mathbf{1}}_{[b,\infty)}(t)$ is also the same
though not continuous.
\item
For $f(t)=\min\{ t^{\alpha}, t^{\beta}\}$ with $1\le\alpha<\beta$ and $g(t)=\sinh t^\gamma$
or $t\exp t^\gamma$ with $\gamma>1$, $f\circ g(t)$ is a function in $\cS$, but neither log-concave nor convex.
\end{itemize}
\end{example}

\vskip 5pt
Recall that $\cM$ stands for a (finite) diffuse algebra. The superadditivity results in this
section also hold with $\bM_n$ in place of $\cM$ with similar though simpler proofs. The next
theorem is the main result of this section.

\begin{theorem}\label{theorem-2}
Let $A,B\in\overline{\cM}^+$ and let $\psi(t)$ be a function in ${\mathcal{S}}$. Then, for
any fully symmetric derived anti-norm on $\overline{\cM}^+$,
$$
\| \psi(A+B) \|_! \ge \| \psi(A) \|_! + \| \psi(B) \|_!.
$$
\end{theorem}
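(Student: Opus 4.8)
The plan is to reduce, in two stages, to a scalar inequality for weighted power means with a negative exponent.

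\emph{Stage~0 (reduction to bounded operators).} As in the proof of Theorem~\ref{theorem-1} I would replace $A,B$ by $A\wedge s$, $B\wedge s$ and let $s\nearrow\infty$ at the end, the limit being controlled by Proposition~\ref{full-sym-deriv}(b), which for fully symmetric derived anti-norms also restores $(4)_!$ on $\overline{\cM}^+$. If $\psi$ is one of the discontinuous members of $\cS$, I would approximate it monotonically from below by continuous functions in $\cS$ and use the same device, so from now on $A,B\in\cM^+$ and $\psi$ is continuous; recall also that $g(0)=f(0)=0$ and that $f$ is non-decreasing.

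\emph{Stage~1 (peeling off the convex inner function).} Write $\psi=f\circ g$ with $f$ log-concave superadditive and $g$ convex superadditive. Since $g(0)=0$, Theorem~\ref{th-super} gives unitaries $U,V$ with $g(A+B)+\eps I\ge Ug(A)U^*+Vg(B)V^*$; applying the non-decreasing function $f$ turns this into a spectral dominance, which Lemma~\ref{lemma-sp-dominance} together with the central decomposition of Section~3 turns into $Wf(Ug(A)U^*+Vg(B)V^*)W^*\le f(g(A+B)+\eps I)+\delta I$ for a unitary $W$. Monotonicity of the anti-norm and $(2)_!$, followed by $\delta,\eps\searrow0$ (valid by $(4)_!$ on $\overline{\cM}^+$), give $\|\psi(A+B)\|_!\ge\|f(Ug(A)U^*+Vg(B)V^*)\|_!$, while $\|f(Ug(A)U^*)\|_!=\|Uf(g(A))U^*\|_!=\|\psi(A)\|_!$ and similarly for $B$. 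Hence it suffices to treat $\psi=f$ log-concave superadditive, $A,B\in\cM^+$.

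\emph{Stage~2 (the log-concave case).} Fix a fully symmetric derived anti-norm, say derived from a fully symmetric norm $\|\cdot\|$ and $p>0$ (Lemma~\ref{lemma-fully-deriv}). By full symmetry and Proposition~\ref{prop-deriv}, $\|f(A)\|_!=\|(f(A))^{-p}\|^{-1/p}$ depends only on $a:=\lambda(A)$ through $s\mapsto f(a(s))^{-p}$; writing a fully symmetric norm as a supremum of positive combinations of the Ky Fan functionals $\|\cdot\|_{(t)}$ of Example~\ref{Exam}(1), one gets $\|f(A)\|_!=\inf_w M_w(f(a))$ where $M_w(u):=\bigl(\int_0^1 w(s)\,u(s)^{-p}\,ds\bigr)^{-1/p}$ and $w$ ranges over a family of non-decreasing non-negative weights. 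I would then prove the scalar inequality $M_w(f(c))\ge M_w(f(a))+M_w(f(b))$ for all non-increasing $a,b,c$ with $c\prec a+b$, via the chain $M_w(f(c))\ge M_w(f(a+b))\ge M_w(f(a)+f(b))\ge M_w(f(a))+M_w(f(b))$: the first step holds because $\Psi:=f^{-p}=e^{-p\log f}$ is convex and non-increasing (this is the only place where log-concavity of $f$ is used), so $u\mapsto\int_0^1 w(s)\Psi(u(s))\,ds$ decreases under majorization of non-increasing functions when $w$ is non-decreasing; the second by superadditivity of $f$ and monotonicity of $M_w$; the third by the reverse Minkowski inequality for the exponent $-p<0$. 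Applying this with $a=\lambda(A)$, $b=\lambda(B)$, $c=\lambda(A+B)$ — legitimate since $\lambda(A+B)\prec\lambda(A)+\lambda(B)$ (Example~\ref{Exam}(2)) — and then taking the infimum over $w$, using $\inf_w(x_w+y_w)\ge\inf_w x_w+\inf_w y_w$, yields $\|f(A+B)\|_!\ge\|f(A)\|_!+\|f(B)\|_!$.

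\emph{Expected main obstacle.} The crux is Stage~2: since $f$ need not be convex, there is no operator superadditivity $f(A+B)\gtrsim Uf(A)U^*+Vf(B)V^*$ to lean on as in the convex case, and one must instead work with the majorization $\lambda(A+B)\prec\lambda(A)+\lambda(B)$ and verify that log-concavity of $f$ is exactly what makes $u\mapsto\int w\,f^{-p}(u)$ monotone under spectral majorization against a monotone weight. A secondary nuisance is the bookkeeping of the reductions to bounded (and, where relevant, nonsingular) operators and to continuous $\psi$, which leans on the anti-Fatou property of Proposition~\ref{full-sym-deriv}(b), unavailable for general anti-norms on $\overline{\cM}^+$.
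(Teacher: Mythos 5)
Your Stages 0 and 1 coincide with the paper's proof: the reduction to $A,B\in\cM^+$ via $A\wedge s$ and Proposition \ref{full-sym-deriv}\,(b), and the use of Theorem \ref{th-super} on the convex factor $g$ followed by monotonicity of $f$ to reduce everything to the superadditivity statement for a log-concave superadditive $f$ alone (the paper's Lemma \ref{lemma-log}); your extra pass through Lemma \ref{lemma-sp-dominance} to upgrade the spectral dominance to an operator inequality is harmless but unnecessary, since pointwise dominance of spectral scales already gives $\prec^{w(\log)}$ and hence the anti-norm inequality by Proposition \ref{full-sym-deriv}\,(a). Where you genuinely diverge is Stage 2. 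The paper stays on the logarithmic scale: from $\lambda(A+B)\prec\lambda(A)+\lambda(B)$ and concavity of $\log f$ it gets $f(A+B)\prec^{w(\log)}\int_0^1 f(\lambda_t(A)+\lambda_t(B))\,dF_t$, then uses superadditivity of $f$ and invokes Proposition \ref{full-sym-deriv}\,(a) plus superadditivity of the anti-norm itself. You instead pass to the $(-p)$-power scale, represent the derived anti-norm as $\inf_w M_w$ over non-decreasing weights, and run a scalar chain (weighted majorization for the convex non-increasing $\Psi=f^{-p}$, then pointwise superadditivity, then reverse Minkowski with exponent $-p$). Your chain is sound — convexity of $f^{-p}$ is indeed exactly log-concavity of $f$, and the weighted majorization step reduces after the substitution $s\mapsto 1-s$ to the classical fact that $u\prec v$ with $\Psi$ convex gives $\Psi(u)\prec_w\Psi(v)$, integrated against a non-increasing weight. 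What your route buys is that it bypasses Proposition \ref{full-sym-deriv}\,(a) (whose proof in the paper leans on \cite{HN1}); what it costs is the identity $\|X\|=\sup_w\int_0^1 w(s)\mu_s(X)\,ds$, i.e.\ the K\"othe bidual representation of a fully symmetric norm. That identity is true here — it follows from the Fatou property of Proposition \ref{full-sym}\,(b) via Luxemburg's theorem together with the Hardy--Littlewood rearrangement inequality — but it is a substantive classical ingredient not established in the paper, and the ``$\le$'' direction of it is genuinely needed for the term $\|f(A+B)\|_!$, so you must cite or prove it. Two smaller bookkeeping points: you should dispose of the case $\|f(A)\|_!=0$ or $\|f(B)\|_!=0$ first (Proposition \ref{prop-deriv}\,(b)) and then replace $A,B$ by $A+\eps I,B+\eps I$ so that $\Psi=f^{-p}$ stays finite where it is evaluated, exactly as the paper does; and the continuity of $\psi$ needs no separate approximation argument, since a log-concave $f$ is automatically continuous on the set where it is positive, which is all that the spectral calculus steps require.
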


\vskip 5pt
The proof is based on Theorem \ref{th-super} and the next lemma.

\begin{lemma}\label{lemma-log}
Let $\|\cdot\|_!$ be a fully symmetric derived anti-norm on $\cM^+$. Let
$A,B\in\overline{\cM}^+$. If $f:[0,\infty)\to[0,\infty)$ is superadditive and log-concave,
then
$$
\|f(A+B)\|_!\ge\|f(A)\|_!+\|f(B)\|_!.
$$
\end{lemma}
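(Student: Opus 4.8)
\textbf{Proof plan for Lemma \ref{lemma-log}.}
The plan is to reduce the statement about a general superadditive log-concave $f$ to the case of derived anti-norms, using the structure of $\cS$ and the majorization machinery of Section 6. First I would dispose of the degenerate situations: if $f$ is eventually $0$ or if one of $A,B$ fails the relevant integrability needed for the anti-norm to be finite, the inequality holds trivially (recall from Proposition \ref{prop-deriv}(b) and the discussion around Remark \ref{R-3.14} that the anti-norm vanishes on singular operators). So one may assume $f$ is strictly positive on $(0,\infty)$, hence $\log f$ is a genuine concave nondecreasing function, and after a harmless scaling that $f(1)=1$.

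The key reduction is this: a fully symmetric derived anti-norm $\|\cdot\|_!$ is derived from a fully symmetric norm $\|\cdot\|$ and a parameter $p>0$ (Lemma \ref{lemma-fully-deriv}), and by Proposition \ref{full-sym-deriv}(a) such anti-norms are monotone under log-supermajorization. So it suffices to produce the \emph{operator-level} log-supermajorization estimate: for $A,B\in\overline{\cM}^+$ there exist (or one needs) control of $\lambda\bigl(f(A+B)\bigr)$ from below by $\lambda\bigl(f(A)\bigr)+\lambda\bigl(f(B)\bigr)$ in the $\prec^{w(\log)}$ sense, i.e.
$$
\int_t^1\log\lambda_s\bigl(f(A)+f(B)\bigr)\,ds\le\int_t^1\log\lambda_s\bigl(f(A+B)\bigr)\,ds,
\qquad t\in(0,1).
$$
Here is where log-concavity of $f$ enters: writing $f=\exp\circ h$ with $h$ concave nondecreasing, one wants to compare $h$ of a sum with $h$ of the pieces. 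Because $h$ is concave and $A+B\ge A,B$, and because the spectral scales of $f(A+B)$, $f(A)$, $f(B)$ are just $f$ applied to the corresponding nonincreasing rearrangements, the required pointwise-in-$t$ inequality should follow from the scalar fact that for a superadditive log-concave $f$ and $a,b\ge 0$ one has $f(a+b)\ge f(a)f(b)$ when $f(1)\le 1$ type normalization is in force — more precisely, from log-concavity plus $\log f(0)\ge 0$ one gets $\log f(a+b)\ge\log f(a)+\log f(b)$, i.e. $f$ is \emph{log-superadditive}. Combining this with the Rotfel'd-type majorization $\lambda(X+Y)\prec\lambda(X)+\lambda(Y)$ applied to $X=f(A)$, $Y=f(B)$, and the fact that $\log$ turns the submajorization of the sum into the desired supermajorization after the log transform, yields the displayed log-supermajorization; Proposition \ref{full-sym-deriv}(a) then finishes the proof.

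The main obstacle I anticipate is the bookkeeping in the unbounded case: one must first truncate via $A\wedge s$, $B\wedge s$, check that the scalar log-concavity manipulation survives the truncation (it does, since $\log(f(e^x\wedge s))$ is still concave, as already used in the proof of Proposition \ref{full-sym-deriv}(a)), establish the log-supermajorization for the truncations, and then pass to the limit $s\nearrow\infty$ using the anti-Fatou property for fully symmetric derived anti-norms from Proposition \ref{full-sym-deriv}(b) together with $\|f(A)\|_!=\lim_{s\nearrow\infty}\|f(A)\wedge s\|_!=\lim_{s\nearrow\infty}\|f(A\wedge s)\|_!$ as in the proof of Theorem \ref{theorem-1}. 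The delicate point is that $f(A+B)$ may have a nontrivial kernel or fail integrability even when the truncations do not, so one should compare $\|f(A+B)\|_!$ with $\liminf$ of the truncated quantities rather than assuming clean convergence, and invoke the vanishing of derived anti-norms on the bad locus to keep the inequality in the right direction.
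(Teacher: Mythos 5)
There is a genuine gap at the heart of your argument. Your key scalar claim --- that a superadditive log-concave $f$ is ``log-superadditive'', i.e.\ $f(a+b)\ge f(a)f(b)$ --- is false: take $f(t)=t$, which is superadditive and log-concave, yet $f(10+10)=20<100=f(10)f(10)$. The purported derivation is also broken at both ends: superadditivity forces $f(0)=0$ (from $f(0)\ge 2f(0)$), so $\log f(0)=-\infty$ rather than $\ge 0$; and even if one had $\log f(0)\ge0$, concavity of $\log f$ would then yield \emph{sub}additivity of $\log f$, the opposite of what you need. Because of this, your plan of applying the Rotfel'd majorization to $X=f(A)$, $Y=f(B)$ and then taking logarithms cannot close: you never get to use the log-concavity of $f$ where it matters, namely to transfer a majorization of $A+B$ into a log-supermajorization of $f(A+B)$.

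The correct route (the paper's) applies the majorization one level down: from $\lambda(A+B)\prec\lambda(A)+\lambda(B)$ and the concavity of the nondecreasing function $\log f$ one gets $\log f(\lambda(A+B))\prec^w\log f(\lambda(A)+\lambda(B))$, i.e.\ $f(A+B)\prec^{w(\log)}\int_0^1 f(\lambda_t(A)+\lambda_t(B))\,dF_t$; then ordinary superadditivity $f(a+b)\ge f(a)+f(b)$ is used pointwise in $t$ to replace the integrand by $f(\lambda_t(A))+f(\lambda_t(B))$, and Proposition \ref{full-sym-deriv}\,(a) together with $(3)_!$ applied to this \emph{aligned} sum $\int_0^1(f(\lambda_t(A))+f(\lambda_t(B)))\,dF_t$ (whose anti-norm dominates $\|f(A)\|_!+\|f(B)\|_!$ by full symmetry) finishes the proof. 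Note also that your reduction ``assume $f>0$ on $(0,\infty)$'' is not a harmless normalization: $f(t)=(t-\alpha)_+$ is superadditive and log-concave but vanishes on $[0,\alpha]$. The paper handles this by first discarding the case $\|f(A)\|_!=0$ or $\|f(B)\|_!=0$ (where the inequality is trivial since $f$ is nondecreasing), concluding that $f>0$ above $t_0=\min\{\lambda_1(A),\lambda_1(B)\}$, and then replacing $A,B$ by $A+\eps I,B+\eps I$ so that $\log f$ is finite on the relevant range. Your framing of the reduction to the bounded case and the appeal to Proposition \ref{full-sym-deriv} are in the right spirit, but the central inequality as you propose to prove it does not hold.
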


\begin{proof}
For each $s>0$, since $\lambda(f(A+B))\ge\lambda(f(A\wedge s+B\wedge s))$, by Proposition
\ref{full-sym-deriv}\,(a) we have $\|f(A+B)\|_!\ge\|f(A\wedge s+B\wedge s)\|_!$. Since
$\|f(A)\|_!=\lim_{s\nearrow\infty}\|f(A\wedge s)\|_!$ and similarly for $\|f(B)\|_!$ as in
the proof of Theorem \ref{theorem-1}, we may assume that $A,B\in\cM^+$. We prove that
\begin{equation}\label{log-super}
f(A+B)\prec^{w(\log)}\int_0^1f(\lambda_t(A)+\lambda_t(B))\,dF_t.
\end{equation}
As $f(t)$ is non-decreasing, we have $\lambda(f(A+B))\ge\lambda(f(A))$ so that
$\|f(A+B)\|_!\ge\|f(A)\|_!$, and similarly $\|f(A+B)\|_!\ge\|f(B)\|_!$. Hence, the claimed
inequality is obvious if $\|f(A)\|_!=0$ or $\|f(B)\|_!=0$. So assume that $\|f(A)\|_!>0$ and
$\|f(B)\|_!>0$. This implies by Proposition \ref{prop-deriv}\,(b) that $f(A)$ and $f(B)$ are
nonsingular, so $f(\lambda_t(A))>0$ and $f(\lambda_t(B))>0$ for all $t\in(0,1)$. Hence
$f(t)>0$ for all $t>t_0:=\min\{\lambda_1(A),\lambda_1(B)\}$. Furthermore, thanks to
Proposition \ref{full-sym-deriv}\,(b), it suffices to prove the claimed inequality for
$A+\eps I$ and $B+\eps I$ for any $\eps>0$. Thus we can assume that $f(t)>0$ for all
$t\ge t_0$.

Then, from the majorization $\lambda(A+B)\prec\lambda(A)+\lambda(B)$ and the concavity of
$\log f$, we have
$$
\log\lambda(f(A+B))=\log f(\lambda(A+B))\prec^w\log f(\lambda(A)+\lambda(B)),
$$
which means that the log-supermajorization \eqref{log-super} holds. As $f(t)$ is further
superadditive, \eqref{log-super} entails
$$
f(A+B)\prec^{w(\log)}\int_0^1(f(\lambda_t(A))+f(\lambda_t(B)))\,dF_t.
$$
Therefore, Proposition \ref{full-sym-deriv}\,(a) implies that
\begin{align*}
\|f(A+B)\|_!&\ge\bigg\|\int_0^1f(\lambda_t(A))\,dF_t\bigg\|_!
+\bigg\|\int_0^1f(\lambda_t(B))\,dF_t\bigg\|_! \\
&=\|f(A)\|_!+\|f(B)\|_!.
\end{align*}
\end{proof}

\begin{proof} (Theorem \ref{theorem-2})\quad
As in the proof of the previous lemma, we may and do assume that $A,B\in\cM^+$. Let $\psi(t)$
in $\cS$ be written as $\psi(t)=f\circ g(t)$, with $f(t)$ superadditive log-concave and
$g(t)$ superadditive convex. If $\|\psi(A)\|_!=0$ or $\|\psi(B)\|_!=0$, then the claimed
inequality follows as in the proof of the previous lemma. So we may assume that
$\|\psi(A)\|_!>0$ and $\|\psi(B)\|_!>0$. This implies that $f(\lambda_t(g(A)))>0$ and
$f(\lambda_t(g(B)))>0$ for all $t\in(0,1)$.  Hence, $f(t)>0$ for all
$t>t_0:=\min\{\lambda_1(g(A)),\lambda_1(g(B))\}$, so that $f(t)$ is continuous on
$(t_0,\infty)$. For every $\eps>0$ let $U,V\in\cM$ be unitaries as given in Theorem
\ref{th-super}. Then we have
$$
\lambda_t(f(g(A+B)+\eps I))\ge\lambda_t(f(Ug(A)U^*+Vg(B)V^*))
$$
for all $t\in(0,1)$. By Proposition \ref{full-sym-deriv}\,(a) and Lemma \ref{lemma-log}, this implies that
\begin{align*}
\|f(g(A+B)+\eps I)\|_!&\ge\|f(Ug(A)U^*+Vg(B)V^*)\|_! \\
&\ge\|f(g(A))\|_!+\|f(g(B))\|_!.
\end{align*}
Letting $\eps\searrow0$ yields the claimed inequality thanks to Proposition
\ref{full-sym-deriv}\,(b). 
\end{proof}

Finally, we return to a general finite von Neumann algebra $\cN$ with a faithful normal
trace $\tau$, $\tau(I)=1$, and extend Theorem \ref{theorem-2} to $\overline{\cN}^+$ with a
restriction on derived anti-norms. For this, we start with a fully symmetric norm $\rho$ on
the commutative von Neumann algebra $L^\infty(0,1)$ with the trace $\int_0^1\cdot\,dt$
(expectation). Define a fully symmetric norm $\|\cdot\|_\rho$ on $\cN$ as
$$
\|X\|_\rho:=\rho(\mu(X)),\qquad X\in\cN,
$$
which we call a {\it $\rho$-symmetric norm}. This way of construction of symmetric norms
is common in the theory of non-commutative Banach function spaces (e.g., \cite{DDP1,DDP2}).
Let $\|\cdot\|_!$ be the (fully symmetric) derived anti-norm on $\cN^+$ that is derived
from $\|\cdot\|_\rho$ and a $p>0$. In case of a diffuse $\cM$, any fully symmetric norm on
$\cM$ is a $\rho$-symmetric norm with
$$
\rho(h):=\bigg\|\int_0^1h(t)\,dF_t\bigg\|,\qquad h\in L^\infty(0,1),
$$
so the restriction here on fully symmetric derived anti-norms is indeed no restriction
in the diffuse case. However, on a general $\cN$ we have a fully symmetric norm which is
not written as a $\rho$-symmetric norm.

Consider the tensor product (diffuse) von Neumann algebra
$$
\cM:=\cN\,\overline{\otimes}\,L^\infty(0,1)
$$
with the tensor product trace $\tau\otimes\int_0^1\cdot\,dt$, and define the $\rho$-symmetric
norm $\|\cdot\|_\rho$ on $\cM$ and the corresponding derived anti-norm $\|\cdot\|_!$ on
$\cM^+$ in the same way as above. Then the following equations are obvious:
$$
\|X\|_\rho=\|X\otimes1\|_\rho,\quad X\in\overline{\cN}\ ;\qquad
\|A\|_!=\|A\otimes1\|_!,\quad A\in\overline{\cN}^+.
$$
Therefore, all the results concerning fully symmetric derived anti-norms on $\cM^+$ in
Section 6 and in this section remain true for the derived anti-norm $\|\cdot\|_!$ on $\cN^+$
from $\|\cdot\|_\rho$ as above. In particular, we have

\begin{cor}\label{cor-super}
Let $\|\cdot\|_!$ be a derived anti-norm on $\cN^+$ that is derived from a $\rho$-symmetric
norm $\|\cdot\|_\rho$ on $\cN$ and a $p>0$. Then, for every $A,B\in\overline{\cN}^+$ and
every function $\psi(t)$ in $\cS$,
$$
\|\psi(A+B)\|_!\ge\|\psi(A)\|_!+\|\psi(B)\|_!.
$$
\end{cor}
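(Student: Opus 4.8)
The plan is to deduce the inequality from Theorem~\ref{theorem-2} by amplifying $\cN$ to a diffuse algebra. Put $\cM:=\cN\,\overline{\otimes}\,L^\infty(0,1)$ with the product trace $\widetilde\tau:=\tau\otimes\int_0^1\cdot\,dt$, acting on the separable Hilbert space $\cH\otimes L^2(0,1)$. Since $L^\infty(0,1)$ is diffuse, $\cM$ is a diffuse finite von Neumann algebra, and $\widetilde\tau(I)=\tau(I)=1$, so $\cM$ is precisely of the type to which Theorem~\ref{theorem-2} applies. Let $\|\cdot\|_\rho$ denote the $\rho$-symmetric norm on $\cM$ and $\|\cdot\|_!$ the derived anti-norm on $\overline{\cM}^+$ built from $\|\cdot\|_\rho$ and the same $p>0$; both are fully symmetric, a $\rho$-symmetric norm being fully symmetric by construction and the derived anti-norm of a fully symmetric norm being fully symmetric by Lemma~\ref{lemma-fully-deriv}. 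I then work with the embedding $X\mapsto X\otimes1$ of $\overline{\cN}$ into $\overline{\cM}$, which carries $\overline{\cN}^+$ into $\overline{\cM}^+$.

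First I would record the two isometry identities
$$
\|X\|_\rho=\|X\otimes1\|_\rho\quad(X\in\overline{\cN}),\qquad
\|A\|_!=\|A\otimes1\|_!\quad(A\in\overline{\cN}^+).
$$
Both come from the single fact that the spectral projections of $|X\otimes1|$ are $\mathbf{1}_{(s,\infty)}(|X|)\otimes1$, so that, $\widetilde\tau$ being a product trace, $\widetilde\tau(\mathbf{1}_{(s,\infty)}(|X\otimes1|))=\tau(\mathbf{1}_{(s,\infty)}(|X|))$ and hence $\mu_t(X\otimes1)=\mu_t(X)$ for all $t\in(0,1)$; the first identity then reads $\|X\otimes1\|_\rho=\rho(\mu(X\otimes1))=\rho(\mu(X))=\|X\|_\rho$. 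For the anti-norm, on $\cN^+$ I would use $(A\otimes1+\eps I)^{-p}=(A+\eps I)^{-p}\otimes1$ together with the first identity to get $\|(A\otimes1+\eps I)^{-p}\|_\rho=\|(A+\eps I)^{-p}\|_\rho$ and then let $\eps\searrow0$; the passage to $A\in\overline{\cN}^+$ follows from $(A\otimes1)\wedge s=(A\wedge s)\otimes1$ and the limit $s\nearrow\infty$ in the extension formula of Proposition~\ref{prop-ext-anti}.

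Finally, since the Borel functional calculus gives $\psi(A\otimes1)=\psi(A)\otimes1$ (the spectral projections again tensor) and $(A+B)\otimes1=(A\otimes1)+(B\otimes1)$, Theorem~\ref{theorem-2} applied in $\overline{\cM}^+$ to the pair $A\otimes1,\,B\otimes1$ with the fully symmetric derived anti-norm $\|\cdot\|_!$ yields
$$
\|\psi(A+B)\otimes1\|_!\ge\|\psi(A)\otimes1\|_!+\|\psi(B)\otimes1\|_!,
$$
and translating through the two isometry identities gives exactly the asserted inequality on $\cN^+$.

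The argument is essentially formal, so I do not expect a genuine obstacle; the one point that must be handled with care is that the amplification really preserves the generalized $s$-numbers and therefore the anti-norm — in particular that the rearrangements $\mu(X\otimes1)$ and $\mu(X)$ live over the same interval $(0,1)$, which is exactly why the normalization $\tau(I)=1=\widetilde\tau(I)$ is used and why no rescaling of $\rho$ is needed. The only other routine verification is that the reduction from $\overline{\cN}^+$ to the bounded case commutes with $X\mapsto X\otimes1$ through the truncations $A\wedge s$.
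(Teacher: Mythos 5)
Your proposal is correct and is essentially the paper's own argument: the authors likewise pass to the diffuse amplification $\cM=\cN\,\overline{\otimes}\,L^\infty(0,1)$, record the two identities $\|X\|_\rho=\|X\otimes1\|_\rho$ and $\|A\|_!=\|A\otimes1\|_!$ (which they call obvious and you verify in detail via $\mu_t(X\otimes1)=\mu_t(X)$), and then quote Theorem~\ref{theorem-2} in $\overline{\cM}^+$. No gap; you have simply made explicit the routine checks the paper leaves to the reader.
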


\vskip 5pt
\begin{cor} Let $g:[0,\infty)\to[0,\infty)$ be a convex function with $g(0)=0$, and let
$\psi(t)$ be a strictly increasing function in $\cS$. Then, for all nonsingular
$A,B\in\cN^+$ and all $0<p\le 1$,
\begin{equation*}
\frac{\tau(g^p(A+B))}{\tau(\psi^{p-1}(A+B))} \ge
\frac{\tau(g^p(A))}{\tau(\psi^{p-1}(A))} + \frac{\tau(g^p(B))}{\tau(\psi^{p-1}(B))}.
\end{equation*}
\end{cor}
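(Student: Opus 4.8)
The plan is to express the functional $X\mapsto\Phi(X):=\tau(g^p(X))/\tau(\psi^{p-1}(X))$ on nonsingular $X\in\cN^+$ as a weighted geometric mean of two superadditive functionals and then to finish with the same elementary inequality used in the proof of Corollary \ref{new-cor}. The case $p=1$ is trivial: since $\psi$ is strictly increasing with $\psi(0)=0$, $\psi(X)$ is nonsingular whenever $X$ is, so $\psi^{p-1}(X)=I$ and, as $\tau(I)=1$, $\Phi(X)=\tau(g(X))$; superadditivity of $X\mapsto\tau(g(X))$ is Theorem \ref{theorem-1} applied to the symmetric anti-norm $A\mapsto\tau(A)$ and to $g$. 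So assume $0<p<1$.

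Set $\alpha(X):=\{\tau(g(X)^p)\}^{1/p}$ and $\beta(X):=\{\tau(\psi(X)^{p-1})\}^{-1/(1-p)}$, with the conventions $0^{p-1}=\infty$ and $\infty^{-1/(1-p)}=0$, so that $\Phi(X)=\alpha(X)^p\beta(X)^{1-p}$ (when $\tau(\psi(X)^{p-1})=\infty$ both sides read $0$, because $\tau(g(X)^p)<\infty$ as $A,B$ are bounded). For $\alpha$: the functional $A\mapsto\{\tau(A^p)\}^{1/p}$ is a symmetric anti-norm on $\cN^+$ for $0<p\le1$ and $g$ is a non-negative convex function with $g(0)=0$, so Theorem \ref{theorem-1} gives $\alpha(A+B)\ge\alpha(A)+\alpha(B)$. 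For $\beta$: $\|\cdot\|_1=\tau(|\cdot|)$ is the $\rho$-symmetric norm on $\cN$ associated with the $L^1$-norm $\rho$ on $L^\infty(0,1)$, so let $\|\cdot\|_!$ be the derived anti-norm obtained from $\|\cdot\|_1$ and the exponent $1-p>0$. Since $A,B$ are nonsingular and $\psi$ is strictly increasing with $\psi(0)=0$, each of $\psi(A+B),\psi(A),\psi(B)$ is nonsingular, so Proposition \ref{prop-deriv}\,(c) gives $\|\psi(X)\|_!=\|\psi(X)^{-(1-p)}\|_1^{-1/(1-p)}=\{\tau(\psi(X)^{p-1})\}^{-1/(1-p)}=\beta(X)$. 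Hence Corollary \ref{cor-super}, applied to $\psi\in\cS$, yields $\beta(A+B)\ge\beta(A)+\beta(B)$.

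Combining these, and using that $x\mapsto x^p$ and $x\mapsto x^{1-p}$ are non-decreasing on $[0,\infty)$ together with the elementary inequality $(a_1+b_1)^p(a_2+b_2)^{1-p}\ge a_1^pa_2^{1-p}+b_1^pb_2^{1-p}$ (concavity of the weighted geometric mean, as in Corollary \ref{new-cor}), we get
$$
\Phi(A+B)=\alpha(A+B)^p\beta(A+B)^{1-p}\ge(\alpha(A)+\alpha(B))^p(\beta(A)+\beta(B))^{1-p}\ge\Phi(A)+\Phi(B).
$$
The one point requiring care — and the place where the hypothesis that $\psi$ is strictly increasing is actually used — is the identification $\beta(X)=\|\psi(X)\|_!$ via Proposition \ref{prop-deriv}\,(c), which needs $\psi(X)$ nonsingular; it must be carried out with the convention $\infty^{-1}=0$ kept in force, since $\tau(\psi(X)^{p-1})$ may genuinely be $+\infty$ (for instance when $\lambda_1(X)=0$ while $X$ is still nonsingular), in which case $\beta(X)=0$, and $\psi(A+B)^{p-1}\le\psi(A)^{p-1},\psi(B)^{p-1}$ (as $t\mapsto t^{p-1}$ is operator monotone decreasing for $0<p<1$) keeps the displayed inequality consistent since all three terms then vanish.
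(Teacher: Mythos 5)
Your proof is correct and follows essentially the same route as the paper's: write the quotient as the $p$-weighted geometric mean of $\{\tau(g^p(\cdot))\}^{1/p}$ (superadditive by Theorem \ref{theorem-1}) and of the derived anti-norm from $\|\cdot\|_1$ and $1-p$ evaluated at $\psi(\cdot)$ (superadditive by Corollary \ref{cor-super}), then invoke concavity of the weighted geometric mean. The only quibble is your closing aside: $\psi(A+B)\ge\psi(A)$ need not hold as an operator inequality for a merely increasing $\psi$, but that remark is superfluous since superadditivity of $\beta$ already forces $\beta(A)=\beta(B)=0$ whenever $\beta(A+B)=0$.
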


\vskip 5pt
\begin{proof}
By Theorem \ref{theorem-1} the functional $A\mapsto\{\tau(g^p(A))\}^{1/p}$ is superadditive
with finite values on $\cN^+$. By the previous corollary,
$A\mapsto\{\tau(\psi^{p-1}(A))\}^{1/(p-1)}$ is superadditive with strictly positive finite
values on the nonsingular part of $\cN^+$. Hence, their $p$-weighted geometric mean is
again superadditive on the nonsingular part of $\cN^+$.
\end{proof}

Note that $\|\cdot\|_{(t)}$ is a $\rho$-symmetric norm and \eqref{limit-form}
is valid in $\cN^+$. Hence, through Corollary \ref{cor-super} applied to the derived
anti-norm from $\|\cdot\|_{(t)}$, we have
$$
\Delta_t(\psi(A+B))\ge\Delta_t(\psi(A))+\Delta_t(\psi(B)),\qquad A,B\in\overline{\cN}^+
$$
for all $t\in(0,1]$ and all functions $\psi(t)$ in $\cS$. For the Fuglede-Kadison
determinant $\Delta$, since
$\Delta(\sqrt{\psi\omega}(A))=\{\Delta(\psi(A))\Delta(\omega(A))\}^{1/2}$ for
$\psi,\omega:[0,\infty)\to[0,\infty)$ and $A\in\overline{\cN}^+$, we furthermore have

\begin{cor}
Let $\psi(t)$ and $\omega(t)$ be two functions in $\cS$. Then, for all
$A,B\in\overline{\cN}^+$,
$$
\Delta\left(\sqrt{\psi\omega}(A+B)\right)\ge
\Delta\left(\sqrt{\psi\omega}(A)\right)+\Delta\left(\sqrt{\psi\omega}(B)\right).
$$
\end{cor}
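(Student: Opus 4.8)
The plan is to deduce this corollary directly from the superadditivity of the Fuglede--Kadison determinant applied to a single function in $\cS$, combined with the multiplicativity property $\Delta(\sqrt{\phi_1\phi_2}(A))=\{\Delta(\phi_1(A))\Delta(\phi_2(A))\}^{1/2}$ noted just before the statement. The key observation is that $\sqrt{\psi\omega}$ need \emph{not} itself lie in $\cS$, so one cannot simply invoke the displayed inequality $\Delta(\chi(A+B))\ge\Delta(\chi(A))+\Delta(\chi(B))$ with $\chi=\sqrt{\psi\omega}$; instead one uses the product structure of $\Delta$ to split the geometric mean of the functions into a geometric mean of two determinant-superadditive functionals.

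First I would record the two instances of the previously established determinant inequality: for every $A,B\in\overline{\cN}^+$,
$$
\Delta(\psi(A+B))\ge\Delta(\psi(A))+\Delta(\psi(B)),\qquad
\Delta(\omega(A+B))\ge\Delta(\omega(A))+\Delta(\omega(B)),
$$
which hold since $\psi,\omega\in\cS$ (these are exactly the displayed consequences of Corollary \ref{cor-super} applied to $\|\cdot\|_{(1)}$, for which $\Delta_1=\Delta$). Next I would invoke the stated multiplicativity: $\Delta(\sqrt{\psi\omega}(X))=\Delta(\psi(X))^{1/2}\,\Delta(\omega(X))^{1/2}$ for every $X\in\overline{\cN}^+$, so that the quantity to be bounded below is the $1/2$-weighted geometric mean of $\Delta(\psi(\cdot))$ and $\Delta(\omega(\cdot))$ evaluated at $A+B$. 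Applying the elementary superadditivity of the weighted geometric mean, namely $\sqrt{(a_1+b_1)(a_2+b_2)}\ge\sqrt{a_1a_2}+\sqrt{b_1b_2}$ for nonnegative reals (the same elementary inequality used in the proof of Corollary \ref{new-cor}), with $a_i=\Delta(\phi_i(A))$ and $b_i=\Delta(\phi_i(B))$ where $\phi_1=\psi$, $\phi_2=\omega$, and using the two determinant inequalities above to replace $a_i+b_i$ by a lower bound $\le\Delta(\phi_i(A+B))$, yields
$$
\Delta(\sqrt{\psi\omega}(A+B))
=\sqrt{\Delta(\psi(A+B))\Delta(\omega(A+B))}
\ge\sqrt{\Delta(\psi(A))\Delta(\omega(A))}+\sqrt{\Delta(\psi(B))\Delta(\omega(B))},
$$
and the right-hand side is exactly $\Delta(\sqrt{\psi\omega}(A))+\Delta(\sqrt{\psi\omega}(B))$ by multiplicativity again.

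The only point requiring a little care is that the values $\Delta(\psi(A+B))$, etc., may a priori be $0$ or $\infty$: the elementary geometric-mean inequality must be applied in $[0,\infty]$, which is harmless since $\Delta$ is nonnegative and the geometric mean is monotone in each argument, so each step is valid with the usual conventions $0\cdot\infty=0$ (the normalization $\tau(I)=1$ keeps $\Delta(I)=1$ finite, and $\psi,\omega$ nondecreasing with $\psi(0)=\omega(0)=0$ rule out genuinely problematic mixed cases). I expect the main obstacle, such as it is, to be purely bookkeeping: confirming that the multiplicativity identity for $\Delta$ extends to $\tau$-measurable operators with possibly infinite determinant, which follows from the defining formula $\Delta(C)=\exp\int_0^1\log\lambda_s(C)\,ds$ and the additivity of the integral whenever the integrals make sense in $[-\infty,\infty]$ --- exactly the situation guaranteed by Proposition \ref{prop-ext-anti} and the remarks following Corollary \ref{Delta}. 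Apart from this, the argument is a two-line combination of the two prior determinant inequalities with a scalar inequality, so no genuinely new idea is needed.
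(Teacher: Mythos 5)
Your proposal is correct and follows exactly the route the paper intends: the displayed superadditivity $\Delta(\psi(A+B))\ge\Delta(\psi(A))+\Delta(\psi(B))$ for each of $\psi,\omega\in\cS$ (the $t=1$ case of the $\Delta_t$ inequality obtained from Corollary \ref{cor-super}), combined with the multiplicativity $\Delta(\sqrt{\psi\omega}(A))=\{\Delta(\psi(A))\Delta(\omega(A))\}^{1/2}$ and the elementary superadditivity of the geometric mean $\sqrt{(a_1+b_1)(a_2+b_2)}\ge\sqrt{a_1a_2}+\sqrt{b_1b_2}$. The paper leaves the last scalar step implicit, so your write-up, including the care about values in $[0,\infty]$, is if anything slightly more complete than the original.
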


This is a substantial generalization of the Minkowski inequality for $\Delta(A)$
on $\cM^+$ given in \cite{Ar} as a consequence of a variational expression of $\Delta$. In
addition, it is worth noting that the concavity of $A\mapsto\Delta(f(A))$ on $\cM^{sa}$ for
a positive concave function $f$ was shown in \cite{LP} (a similar result for matrices is in
\cite{BH1}).

\vskip 20pt

{\small
Jean-Christophe Bourin

Laboratoire de math\'ematiques, Universit\'e de Franche Comt\'e,

25030 Besan\c con, France

jcbourin@univ-fcomte.fr

\vskip 20pt
Fumio Hiai

Tohoku University (Emeritus),

Hakusan 3-8-16-303, Abiko 270-1154, Japan

hiai.fumio@gmail.com

}

\end{document}